\numberwithin{equation}{section}
\numberwithin{equation}{section}
\numberwithin{theorem}{section}
\numberwithin{proposition}{section}
\begin{document}

\title{Cut Locus and Optimal Synthesis in Sub-Riemannian Problem on the
Lie Group SH(2)}

\author{Yasir Awais Butt, Yuri L. Sachkov, Aamer Iqbal Bhatti}

\institute{Yasir Awais Butt \at Department of Electronic Engineering\\
Muhammad Ali Jinnah University\\
Islamabad, Pakistan\\
Tel.: +92-51-111878787\\
\email{yasir\_awais2000@yahoo.com}\\
\and Yuri L. Sachkov \at Program Systems Institute\\
Pereslavl-Zalessky, Russia\\
\email{yusachkov@gmail.com}\\
\and Aamer Iqbal Bhatti\at Department of Electronic Engineering\\
Muhammad Ali Jinnah University\\
Islamabad, Pakistan\\
Tel.: +92-51-111878787\\
\email{aib@jinnah.edu.pk}}
\maketitle
\begin{abstract}
Global optimality analysis in sub-Riemannian problem on the Lie group SH(2) is considered. We cutout open dense domains in the preimage and in the image of the exponential mapping based on the description of Maxwell strata. We then prove that the exponential mapping restricted to these domains is a diffeomorphism. Based on the proof of diffeomorphism,
the cut time, i.e., time of loss of global optimality is computed on $\mathrm{\ensuremath{SH(2)}}$. We also consider the global structure of the exponential mapping and obtain an explicit description of cut locus and optimal synthesis. 

\keywords{Sub-Riemannian geometry, Special hyperbolic group SH(2), Maxwell points, Cut time, Conjugate time, Optimal synthesis} 

\subclass{49J15, 93B27, 93C10, 53C17, 22E30}
\end{abstract}

\section{Introduction}
In this work we complete our study of the sub-Riemannian problem on the Lie group $\mathrm{SH}(2)$ which is the group of motions of pseudo Euclidean plane. The work was initiated in \cite{Extremal_Pseudo_Euclid} where we defined the sub-Riemannian problem. The control system comprises two 3-dimensional left invariant vector fields and a 2-dimensional
linear control vector. We applied PMP to the control system and obtained the corresponding Hamiltonian system. In \cite{intg_SH2} we proved the Liouville integrability of the Hamiltonian system. We calculated the Hamiltonian flow such that the extremal trajectories were parametrized
in terms of Jacobi elliptic functions \cite{Extremal_Pseudo_Euclid}. Since PMP states only the first order optimality conditions, the trajectory resulting from PMP are only potentially optimal called extremal trajectories
or geodesics. Further analysis based on second order optimality conditions is then needed to segregate the optimal trajectories or the minimizing geodesics. It is well known that the candidate optimal trajectories lose optimality either at the Maxwell points or at the conjugate points
\cite{agrachev_sachkov},\cite{max_sre},\cite{cut_sre1}. Based on the optimality analysis one is able to state the time of loss of global optimality known as the cut time. Rigorous techniques for this optimality analysis have evolved over the years from research on related sub-Riemannian problems on various Lie groups, see e.g., \cite{max_sre}, \cite{cut_sre1},
\cite{Sachkov_Dido_Comp_Max}, \cite{cut_engel}. These techniques were employed in \cite{Extremal_Pseudo_Euclid} and \cite{Max_Conj_SH2} to compute the Maxwell strata and the conjugate locus in the problem under investigation. An effective upper bound on the cut time was also computed. 

In this paper we extend the global optimality analysis similar to \cite{cut_sre2}. We decompose the image $M=\mathrm{SH}(2)$ and the preimage of the exponential mapping into open dense sets based on the Maxwell strata and conjugate loci and prove that the exponential mapping between these sets is a diffeomorphism. This leads naturally to the proof that the cut time is equal to the first Maxwell time. Finally, we analyze the global structure of the exponential mapping and obtain explicit characterization of the cut locus and the optimal synthesis on the manifold $\mathrm{SH}(2)$. 

The paper is organized as follows. In Section 2, we review the results from \cite{Extremal_Pseudo_Euclid} and \cite{Max_Conj_SH2} as ready reference. Sections 3 and 4 contain the main results of this work. In Section 3 we state and prove the conditions for exponential mapping being a diffeomorphism and compute the cut time. Section 4 pertains to explicit characterization of the Maxwell strata and the cut locus
in terms of a stratification of $\mathrm{SH}(2)$. In Section 5 we conclude this work. 

\section{Previous Work}

\subsection{Problem Statement\label{sec:Problem-Statement}}

The Lie group $\mathrm{SH(2)}$ is a 3-dimensional group of roto-translations
of the pseudo Euclidean plane \cite{Ja.Vilenkin}. The sub-Riemannian
problem on the Lie group $\mathrm{SH(2)}$ reads as follows \cite{Extremal_Pseudo_Euclid}:
\begin{eqnarray}
\dot{x} & = & u_{1}\cosh z,\quad\dot{y}=u_{1}\sinh z,\quad\dot{z}=u_{2},\label{eq:2.1}\\
q & = & (x,y,z)\in M=\mathrm{SH(2)\cong\mathbb{R}^{3}},\quad x,y,z\in\mathbb{R},\quad(u_{1},u_{2})\in\mathbb{R}^{2},\label{eq:2.2}\\
q(0) & = & (0,0,0),\qquad q(t_{1})=q_{1}=(x_{1},y_{1},z_{1}),\label{eq:2.3}\\
l & = & \int_{0}^{t_{1}}\sqrt{u_{1}^{2}+u_{2}^{2}}\, dt\to\min.\label{eq:2.4}
\end{eqnarray}
By Cauchy-Schwarz inequality, the sub-Riemannian length functional
$l$ minimization problem (\ref{eq:2.4}) is equivalent to the problem
of minimizing the following action functional with fixed $t_{1}$
\cite{sachkov_lectures}:
\begin{equation}
J=\frac{1}{2}\intop_{0}^{t_{1}}(u_{1}^{2}+u_{2}^{2})dt\rightarrow\min.\label{eq:2.5}
\end{equation}

\subsection{Known Results\label{sec:Previous-Work}}

We now briefly review the results from \cite{Extremal_Pseudo_Euclid} and \cite{Max_Conj_SH2} as a ready reference in this paper. System (\ref{eq:2.1}) satisfies the bracket generating condition and is hence globally controllable \cite{Chow},\cite{Ravchevsky}. Existence of optimal trajectories for the optimal control problem (\ref{eq:2.1})--(\ref{eq:2.5})
follows from Filippov\textquoteright s theorem \cite{agrachev_sachkov}.
We applied PMP \cite{agrachev_sachkov} to (\ref{eq:2.1})--(\ref{eq:2.5}) to derive the normal Hamiltonian system. It turns out that the vertical part of the normal Hamiltonian system is a double covering of a mathematical pendulum. The normal Hamiltonian system is given as:

\begin{eqnarray}
\dot{\gamma} & = & c,\quad\dot{c}=-\sin\gamma,\quad\lambda=(\gamma,c)\in C\cong(2S_{\gamma}^{1})\times\mathbb{R}_{c},\quad2S_{\gamma}^{1}=\mathbb{R}/(4\pi\mathbb{Z}),\label{eq:2.6}\\
\dot{x} & = & \cos\frac{\gamma}{2}\cosh z,\quad\dot{y}=\cos\frac{\gamma}{2}\sinh z,\quad\dot{z}=\sin\frac{\gamma}{2}.\label{eq:2.7}
\end{eqnarray}
The total energy integral of the pendulum (\ref{eq:2.6}) is given
as:
\begin{equation}
E=\frac{c^{2}}{2}-\cos\gamma,\quad E\in[-1,+\infty).\label{eq:E_pend}
\end{equation}
The initial cylinder of the vertical subsystem is decomposed into
the following subsets based upon the pendulum energy that correspond
to various pendulum trajectories:
\begin{eqnarray*}
C & = & \bigcup_{i=1}^{5}C_{i},
\end{eqnarray*}
where,
\begin{eqnarray}
C_{1} & = & \left\{ \lambda\in C\,\vert\, E\in(-1,1)\right\} ,\label{eq:2.8}\\
C_{2} & = & \left\{ \lambda\in C\,\vert\, E\in(1,\infty)\right\} ,\\
C_{3} & = & \left\{ \lambda\in C\,\vert\, E=1,c\neq0\right\} ,\\
C_{4} & = & \left\{ \lambda\in C\,\vert\, E=-1,\, c=0\right\} =\left\{ (\gamma,c)\in C\,\vert\,\gamma=2\pi n,\, c=0\right\} ,\quad n\in\mathbb{N},\\
C_{5} & = & \left\{ \lambda\in C\,\vert\, E=1,\, c=0\right\} =\left\{ (\gamma,c)\in C\,\vert\,\gamma=2\pi n+\pi,\, c=0\right\} ,\quad n\in\mathbb{N}.\label{eq:2.12}
\end{eqnarray}

\begin{figure}
\centering{}\includegraphics[scale=0.5]{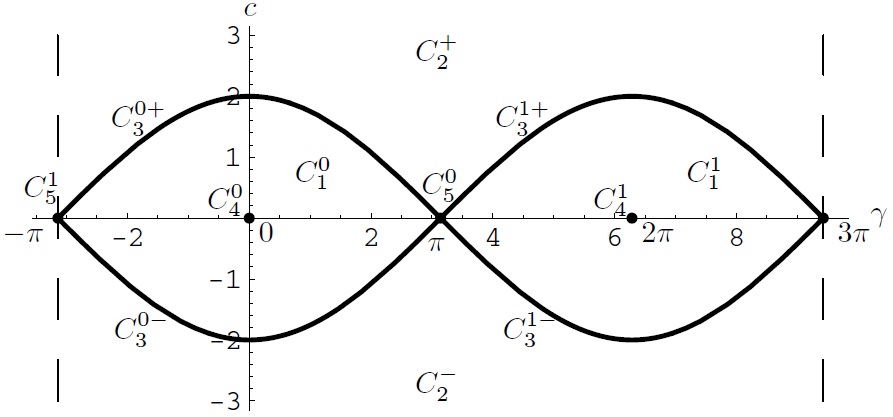}\protect\caption{\label{fig:Decomposition}Stratification of the Phase Cylinder $C$
of the Pendulum}
\end{figure}
We defined elliptic coordinates $(\varphi,k)$ for $\lambda\in\cup_{i=1}^{3}C_{i}\subset C$
and proved that the flow of the pendulum is rectified in these coordinates. Note that $k$ was defined as the reparametrized energy and $\varphi$ was defined as the reparametrized time of motion of the pendulum \cite{Extremal_Pseudo_Euclid}. Integration of the horizontal subsystem in elliptic coordinates follows from integration of the vertical subsystem and the resulting extremal trajectories are parametrized by the Jacobi elliptic functions $\mathrm{sn}(\varphi,k)$, $\mathrm{cn}(\varphi,k)$, $\mathrm{dn}(\varphi,k)$, $\mathrm{E}(\varphi,k)=\intop_{0}^{\varphi}\mathrm{dn^{2}}(t,k)dt$ (Theorems 5.1--5.5 \cite{Extremal_Pseudo_Euclid}). The results of integration for $\lambda\in C_{i},\quad i=1,\ldots,5,$ are summarized as: 

\begin{itemize}
\item Case 1 : $\lambda=(\varphi,k)\in C_{1}$
\begin{equation}
\left(\begin{array}{c}
x_{t}\\
y_{t}\\
z_{t}
\end{array}\right)=\left(\begin{array}{c}
\frac{s_{1}}{2}\left[\left(w+\frac{1}{w\left(1-k^{2}\right)}\right)\left[\mathrm{E}(\varphi_{t})-\mathrm{E}(\varphi)\right]+\left(\frac{k}{w(1-k^{2})}-kw\right)\left[\mathrm{sn}\,\varphi_{t}-\mathrm{sn}\,\varphi\right]\right]\\
\frac{1}{2}\left[\left(w-\frac{1}{w\left(1-k^{2}\right)}\right)\left[\mathrm{E}(\varphi_{t})-\mathrm{E}(\varphi)\right]-\left(\frac{k}{w\left(1-k^{2}\right)}+kw\right)\left[\mathrm{sn}\,\varphi_{t}-\mathrm{sn}\,\varphi\right]\right]\\
s_{1}\ln\left[(\mathrm{dn}\,\varphi_{t}-k\mathrm{cn}\,\varphi_{t}).w\right]
\end{array}\right),\label{eq:2.13}
\end{equation}
where $w=\frac{1}{\mathrm{dn}\varphi-k\mathrm{cn}\varphi}$, $s_{1}=\mathrm{sgn}\left(\cos\frac{\gamma}{2}\right)$
and $\varphi_{t}=\varphi+t$.
\item Case 2 : $\lambda=(\psi,k)\in C_{2}$ 
\begin{eqnarray}
x_{t} & = & \frac{1}{2}\left(\frac{1}{w(1-k^{2})}-w\right)\left[\mathrm{E}(\psi_{t})-\mathrm{E}(\psi)-k^{\prime2}\left(\psi_{t}-\psi\right)\right]\nonumber \\
 & + & \frac{1}{2}\left(kw+\frac{k}{w(1-k^{2})}\right)\left[\mathrm{sn}\,\psi_{t}-\mathrm{sn}\,\psi\right],\nonumber \\
y_{t} & = & -\frac{s_{2}}{2}\left(\frac{1}{w(1-k^{2})}+w\right)\left[\mathrm{E}(\psi_{t})-\mathrm{E}(\psi)-k^{\prime2}(\psi_{t}-\psi)\right]\nonumber \\
 & + & \frac{s_{2}}{2}\left(kw-\frac{k}{w(1-k^{2})}\right)\left[\mathrm{sn}\,\psi_{t}-\mathrm{sn}\,\psi\right],\nonumber \\
z_{t} & = & s_{2}\ln\left[\left(\mathrm{dn}\,\psi_{t}-k\mathrm{cn}\,\psi_{t}\right).w\right],\label{eq:2.14}
\end{eqnarray}
where $\psi=\frac{\varphi}{k}$, $\quad\psi_{t}=\frac{\varphi_{t}}{k}=\psi+\frac{t}{k}$
and $w=\frac{1}{\mathrm{dn}\,\psi-k\mathrm{cn}\,\psi}$, $s_{2}=\mathrm{sgn}\, c$,
$k^{\prime}=\sqrt{1-k^{2}}$. 
\item Case 3 : $\lambda=(\varphi,k)\in C_{3}$
\begin{equation}
\left(\begin{array}{c}
x_{t}\\
y_{t}\\
z_{t}
\end{array}\right)=\left(\begin{array}{c}
\frac{s_{1}}{2}\left[\frac{1}{w}\left(\varphi_{t}-\varphi\right)+w\left(\tanh\varphi_{t}-\tanh\varphi\right)\right]\\
\frac{s_{2}}{2}\left[\frac{1}{w}\left(\varphi_{t}-\varphi\right)-w\left(\tanh\varphi_{t}-\tanh\varphi\right)\right]\\
-s_{1}s_{2}\ln[w\,\textrm{sech}\,\varphi_{t}]
\end{array}\right),\label{eq:2.15}
\end{equation}
where $w=\cosh\varphi$. 
\item Case 4 : $\lambda=(\varphi,k)\in C_{4}$
\begin{equation}
\left(\begin{array}{c}
x\\
y\\
z
\end{array}\right)=\left(\begin{array}{c}
\mathrm{sgn}\left(\cos\frac{\gamma}{2}\right)t\\
0\\
0
\end{array}\right).\label{eq:2.16}
\end{equation}

\item Case 5 : $\lambda=(\varphi,k)\in C_{5}$
\begin{equation}
\left(\begin{array}{c}
x\\
y\\
z
\end{array}\right)=\left(\begin{array}{c}
0\\
0\\
\mathrm{sgn}\left(\sin\frac{\gamma}{2}\right)t
\end{array}\right).\label{eq:2.17}
\end{equation}

\end{itemize}


The phase portrait of the pendulum admits a discrete group of symmetries
$G=\{Id,\varepsilon^{1},\ldots,\varepsilon^{7}\}$. The symmetries
$\varepsilon^{i}$ are reflections and translations about the coordinates
axes $(\gamma,c)$. The reflection symmetries in the phase portrait
of a standard pendulum are given as: 
\begin{equation}
\begin{alignedat}{1}\varepsilon^{1}:(\gamma,c) & \rightarrow(\gamma,-c),\\
\varepsilon^{2}:(\gamma,c) & \rightarrow(-\gamma,c),\\
\varepsilon^{3}:(\gamma,c) & \rightarrow(-\gamma,-c),\\
\varepsilon^{4}:(\gamma,c) & \rightarrow(\gamma+2\pi,c),\\
\varepsilon^{5}:(\gamma,c) & \rightarrow(\gamma+2\pi,-c),\\
\varepsilon^{6}:(\gamma,c) & \rightarrow(-\gamma+2\pi,c),\\
\varepsilon^{7}:(\gamma,c) & \rightarrow(-\gamma+2\pi,-c).
\end{alignedat}
\label{eq:symm}
\end{equation}
According to Proposition 6.3 \cite{Extremal_Pseudo_Euclid}, the action
of reflections on endpoints of extremal trajectories can be defined
as $\varepsilon^{i}:q\mapsto q^{i}$, where $q=(x,y,z)\in M,\quad q^{i}=(x^{i},y^{i},z^{i})\in M$
and, 
\begin{align}
(x^{1},y^{1},z^{1}) & =(x\cosh z-y\sinh z,\, x\sinh z-y\cosh z,\, z),\nonumber \\
(x^{2},y^{2},z^{2}) & =(x\cosh z-y\sinh z,\,-x\sinh z+y\cosh z,\,-z),\nonumber \\
(x^{3},y^{3},z^{3}) & =(x,\,-y,\,-z),\nonumber \\
(x^{4},y^{4},z^{4}) & =(-x,\, y,\,-z),\label{eq:symm_M}\\
(x^{5},y^{5},z^{5}) & =(-x\cosh z+y\sinh z,\, x\sinh z-y\cosh z,\,-z),\nonumber \\
(x^{6},y^{6},z^{6}) & =(-x\cosh z+y\sinh z,\,-x\sinh z+y\cosh z,\, z),\nonumber \\
(x^{7},y^{7},z^{7}) & =(-x,\,-y,\, z).\nonumber 
\end{align}
These symmetries are exploited to state the general conditions on
Maxwell strata in terms of the functions $z_{t}$ and $R_{i}(q)$
given as: 
\begin{equation}
R_{1}=y\cosh\frac{z}{2}-x\sinh\frac{z}{2},\quad R_{2}=x\cosh\frac{z}{2}-y\sinh\frac{z}{2}.\label{eq:2.18}
\end{equation}
We define the Maxwell sets $MAX^{i},\quad i=1,\ldots,7$, resulting
from the reflections $\varepsilon^{i}$ of the extremals in the preimage
of the exponential mapping $N$ as:
\[
\mathrm{MAX}^{i}=\left\{ \text{\ensuremath{\nu}}=(\text{\ensuremath{\lambda}},t)\text{\ensuremath{\in}}N=C\times\mathbb{R}^{+}\quad|\quad\lambda\neq\lambda{}^{i},\quad\mathrm{Exp}(\lambda,t)=\mathrm{Exp}(\lambda^{i},t)\right\} ,
\]
where $\lambda=\varepsilon^{i}(\lambda).$ The corresponding Maxwell
strata in the image of the exponential mapping are defined as:

\[
\mathrm{Max}^{i}=\mathrm{Exp}(\mathrm{MAX}^{i})\subset M.
\]
 In \cite{Max_Conj_SH2} Proposition 3.7 we proved that the first
Maxwell points corresponding to the reflection symmetries of the vertical
subsystem lie on the plane $z=0$ and the corresponding Maxwell time
$t_{1}^{\mathrm{Max}}(\lambda)$ is given as :
\begin{eqnarray}
\lambda\in C_{1} & \implies & t_{1}^{\mathrm{Max}}(\lambda)=4K(k),\label{eq:2.20}\\
\lambda\in C_{2} & \implies & t_{1}^{\mathrm{Max}}(\lambda)=4kK(k),\label{eq:2.23}\\
\lambda\in C_{3}\cup C_{4}\cup C_{5} & \implies & t_{1}^{\mathrm{Max}}(\lambda)=+\infty.\label{eq:2.24}
\end{eqnarray}
Similarly we proved that the first conjugate time $t_{1}^{\mathrm{conj}}(\lambda)$
is bounded as (Theorems 4.1--4.3) \cite{Max_Conj_SH2}:
\begin{eqnarray}
\lambda\in C_{1} & \implies & 4K(k)\leq t_{1}^{\mathrm{conj}}(\lambda)\leq2p_{1}^{1}(k),\label{eq:2.25}\\
\lambda\in C_{2} & \implies & 4kK(k)\leq t_{1}^{\mathrm{conj}}(\lambda)\leq2k\, p_{1}^{1}(k),\label{eq:2.26}\\
\lambda\in C_{4} & \implies & t_{1}^{\mathrm{conj}}(\lambda)=2\pi,\label{eq:2.27}\\
\lambda\in C_{3}\cup C_{5} & \implies & t_{1}^{\mathrm{conj}}(\lambda)=+\infty.\label{eq:2.28}
\end{eqnarray}
where $p_{1}^{1}(k)$ is the first positive root of the function $f_{1}(p)=\mathrm{cn}p\,\mathrm{E}(p)-\mathrm{sn}p\,\mathrm{dn}p,$
which is bounded as $p_{1}^{1}(k)\in(2K(k),3K(k))$. Note that we defined: 
\begin{eqnarray}
\varphi_{t}=\tau+p,\quad\varphi=\tau-p\implies\tau & = & \frac{1}{2}\left(\varphi_{t}+\varphi\right),\, p=\frac{t}{2}\textrm{ when }\nu=(\lambda,t)\in N_{1}\cup N_{3},\label{eq:2.29}\\
\psi_{t}=\frac{\varphi_{t}}{k}=\tau+p,\quad\psi=\frac{\varphi}{k}=\tau-p\implies\tau & = & \frac{1}{2k}\left(\varphi_{t}+\varphi\right),\, p=\frac{t}{2k}\textrm{ when }\nu=(\lambda,t)\in N_{2}.\label{eq:2.30}
\end{eqnarray}
Here and below $N_{i}=C_{i}\times\mathbb{R}_{+}$.
\section{Upper Bound on Cut Time}

In this section we describe the basic properties of the upper bound
on cut time obtained in \cite{Max_Conj_SH2}.

Define the following function $\mathbf{t}:C\rightarrow(0,+\infty]$,
\[
\mathbf{t}(\lambda)=\min\left(t_{1}^{\mathrm{Max}}(\lambda),t_{1}^{\mathrm{conj}}(\lambda)\right),\quad\lambda\in C.
\]
Equalities (\ref{eq:2.20})--(\ref{eq:2.28}) yield the explicit representation
of this function:
\begin{eqnarray}
\lambda\in C_{1} & \implies & \mathbf{t}(\lambda)=4K(k),\label{eq:ttC1}\\
\lambda\in C_{2} & \implies & \mathbf{t}(\lambda)=4kK(k),\label{eq:ttC2}\\
\lambda\in C_{4} & \implies & \mathbf{t}(\lambda)=2\pi,\label{eq:ttC4}\\
\lambda\in C_{3}\cup C_{5} & \implies & \mathbf{t}(\lambda)=+\infty.\label{eq:ttC35}
\end{eqnarray}
In \cite{Max_Conj_SH2} we proved the upper bound:
\begin{equation}
t_{\mathrm{cut}}(\lambda)\leq\mathbf{t}(\lambda),\quad\lambda\in C.\label{eq:tCutbound}
\end{equation}

We now prove that inequality (\ref{eq:tCutbound}) is in fact an equality
(see Theorem \ref{thm:cut_time_exact}). The general scheme of the
proof is as follows \cite{cut_sre1}, \cite{cut_engel}:

\begin{enumerate}
\item The exponential mapping $\mathrm{Exp}:N=C\times\mathbb{R}_{+}\rightarrow M$
parametrizes all optimal geodesics, but also all non-optimal ones,
since all the geodesics $\mathrm{Exp}(\lambda,t)$ with $t>\mathbf{t}(\lambda)$
are not optimal. 
\item We reduce the domain of the exponential mapping so that it does not
include these a priori non-optimal geodesics:
\[
\widehat{N}=\left\{ (\lambda,t)\in N\quad\mid\quad t\leq\mathbf{t}(\lambda)\right\} .
\]
We also reduce the range of the exponential mapping so that it does
not contain the initial point for which the optimal geodesic is trivial:
\[
\widehat{M}=M\backslash\left\{ q_{0}\right\} .
\]
Then $\mathrm{Exp}:\widehat{N}\rightarrow\widehat{M}$ is surjective,
but not injective, due to Maxwell points. 
\item We exclude Maxwell points in the image of $\mathrm{Exp}$:
\[
\widetilde{M}=\left\{ q\in M\quad\mid\quad\varepsilon^{i}(q)\neq q\right\} ,
\]
and reduce respectively the preimage of $\mathrm{Exp}$:
\[
\widetilde{N}=\mathrm{Exp}^{-1}\left(\widetilde{M}\right).
\]
The mapping $\mathrm{Exp}:\widetilde{N}\rightarrow\widetilde{M}$
is injective. Moreover, it is non-degenerate since $t_{1}^{\mathrm{conj}}(\lambda)\geq\mathbf{t}(\lambda)$.
\item We take connected components in preimage and image of $\mathrm{Exp}:$
\[
\widetilde{N}=\cup D_{i},\qquad\widetilde{M}=\cup M_{i}.
\]
Each of the mappings $\mathrm{Exp}:D_{i}\rightarrow M_{i}$ is non-degenerate
and proper. Moreover, all $D_{i}$ and $M_{i}$ are smooth 3-dimensional
manifolds, connected and simply connected. By Hadamard's global diffeomorphism
theorem \cite{diffeo}, each $\mathrm{Exp}:D_{i}\rightarrow M_{i}$
is a diffeomorphism. Thus $\mathrm{Exp}:\widetilde{N}\rightarrow\widetilde{M}$
is a diffeomorphism as well.
\item Further, we consider the action of the exponential mapping on the
boundary of the 3-dimensional diffeomorphic domains:
\begin{eqnarray*}
\mathrm{Exp}:N^{\prime} & \rightarrow & M^{\prime},\quad N^{\prime}=\widehat{N}\backslash\widetilde{N},\qquad M^{\prime}=\widehat{M}\backslash\widetilde{M}.
\end{eqnarray*}
We construct a stratification in the preimage and the image of $\mathrm{Exp}:$
\begin{eqnarray*}
N^{\prime} & = & \cup N_{i}^{\prime},\quad M^{\prime}=\cup M_{i}^{\prime},\\
\textrm{dim }N_{i}^{\prime},\textrm{\,\ dim }M_{i}^{\prime} & \in & \left\{ 0,1,2\right\} ,
\end{eqnarray*}
where all $N_{i}^{\prime}$ are disjoint, while some $M_{i}^{\prime}$
coincide with others. Further, we prove that all $\mathrm{Exp}:N_{i}^{\prime}\rightarrow M_{i}^{\prime}$
are diffeomorphisms by the same argument. 
\item On the basis of the global diffeomorphic structure of the exponential
mapping thus described, we get the following results:
\begin{eqnarray*}
t_{\mathrm{cut}}(\lambda) & = & \mathbf{t}(\lambda),\quad\lambda\in C,\\
\mathrm{Max} & = & \cup\left\{ M_{i}^{\prime}\quad\mid\quad\exists\, j\neq i\textrm{ such that }M_{j}^{\prime}=M_{i}^{\prime}\right\} ,\\
\mathrm{Cut} & = & \mathrm{cl}(\mathrm{Max})\backslash\left\{ q_{0}\right\} ,\\
\mathrm{Cut}\cap\mathrm{Conj} & = & \partial(\mathrm{Max})\backslash\left\{ q_{0}\right\} .
\end{eqnarray*}
We show that the optimal synthesis is double valued on the Maxwell
set $\mathrm{Max}$, and is one valued on $\widehat{M}\backslash\mathrm{Max}$.\\
The central notion of our approach is the stratification in the preimage
and in the image of $\mathrm{Exp}:$
\begin{alignat*}{2}
\widehat{N} & =\left(\cup D_{i}\right)\cup\left(\cup N_{i}^{\prime}\right),\\
\widehat{M} & =\left(\cup M_{i}\right)\cup\left(\cup M_{i}^{\prime}\right),\\
\textrm{dim}(D_{i}) & =\textrm{dim}(M_{i})=3,\\
\textrm{dim}(N_{i}^{\prime}),\textrm{\,\ dim}(M_{i}^{\prime}) & \in\left\{ 0,1,2\right\} , & {}
\end{alignat*}
such that all the corresponding strata are diffeomorphic via the exponential
mapping, i.e., $\mathrm{Exp}:D_{i}\rightarrow M_{i}$ and $\mathrm{Exp}:N_{i}^{\prime}\rightarrow M_{i}^{\prime}$
are diffeomorphisms. 
\end{enumerate}


It is well known \cite{cut_engel},\cite{diffeo} that for any
smooth manifolds $X$ and $Y$ of equal dimensions, a smooth mapping
$f:X\rightarrow Y$ is a diffeomorphism if $f$, $X$ and $Y$ satisfy
the following conditions \textbf{P1 }-- \textbf{P4}:

\textbf{P1} - $X$ is connected, 

\textbf{P2 }- $Y$ is connected and simply connected, 

\textbf{P3 }-\textbf{ }$f$ is non-degenerate, 

\textbf{P4 }- $f$ is proper, i.e., for any compact set $K\subset Y$
the inverse image $f^{-1}(K)\subset X$ is also compact. 

We now consider the invariance properties of the function $\mathbf{t}$
with respect to the reflections $\varepsilon^{i}\in G$ and the vertical
part of the Hamiltonian vector field:
\[
\overrightarrow{H}_{\nu}=c\frac{\partial}{\partial\gamma}-\sin\gamma\frac{\partial}{\partial c}\in\mathrm{Vec}(C).
\]
\begin{proposition}
\label{prop:tt_invar} $\qquad$\\

\text{\emph{(1)}}$\quad$The function $\mathbf{t}$ is invariant
w.r.t. the reflections $\varepsilon^{i}\in G$ and the flow of $\overrightarrow{H}_{\nu}$:
\[
\mathbf{t}\circ\varepsilon^{i}(\lambda)=\mathbf{t}\circ e^{t\overrightarrow{H}_{\nu}}(\lambda)=\mathbf{t}(\lambda),\quad\lambda\in C,\quad\varepsilon^{i}\in G,\quad t\in\mathbb{R}.
\]

\text{\emph{(2)}}$\quad$The function $\mathbf{t}:C\rightarrow(0,+\infty]$
is in fact a function $\mathbf{t}(E)$ of the energy $E=\frac{c^{2}}{2}-\cos\gamma$
of pendulum \text{\emph{(\ref{eq:2.6})}}. \end{proposition}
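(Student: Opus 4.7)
The plan is to prove both parts by reducing everything to the single observation that the pendulum energy $E=c^{2}/2-\cos\gamma$ is an invariant of both the reflections $\varepsilon^{i}$ and the flow of $\overrightarrow{H}_{\nu}$, and that the explicit formulas (\ref{eq:ttC1})--(\ref{eq:ttC35}) express $\mathbf{t}(\lambda)$ as a function of the modulus $k$ (or a constant) on each stratum $C_{i}$, with $k$ itself determined by $E$.

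First I would verify that $E$ is preserved under the eight maps $\varepsilon^{i}$ listed in (\ref{eq:symm}). Since $E$ depends on $(\gamma,c)$ only through $\cos\gamma$ and $c^{2}$, and each $\varepsilon^{i}$ acts by $c\mapsto\pm c$ and $\gamma\mapsto\pm\gamma+2\pi n$, the invariance $E\circ\varepsilon^{i}=E$ is immediate. Likewise $E$ is a first integral of the pendulum system $\dot{\gamma}=c$, $\dot{c}=-\sin\gamma$, hence $E\circ e^{t\overrightarrow{H}_{\nu}}=E$. Consequently the decomposition $C=\cup_{i=1}^{5}C_{i}$, whose strata are defined by conditions on $E$ (and, for $C_{3},C_{4},C_{5}$, by the additional vanishing of $c$ at points where $\sin\gamma=0$, which is also preserved since reflections send equilibria to equilibria and the flow keeps equilibria fixed), is invariant under every $\varepsilon^{i}$ and under the flow.

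Next I would invoke the definition of the elliptic modulus $k$ from \cite{Extremal_Pseudo_Euclid}: on $C_{1}$, $k^{2}=(E+1)/2$, and on $C_{2}$, $k^{2}=2/(E+1)$, so $k$ is a function of $E$ on each of these strata. Formulas (\ref{eq:ttC1})--(\ref{eq:ttC35}) then read $\mathbf{t}=4K(k)$ on $C_{1}$, $\mathbf{t}=4kK(k)$ on $C_{2}$, $\mathbf{t}=2\pi$ on $C_{4}$, $\mathbf{t}=+\infty$ on $C_{3}\cup C_{5}$; in every case $\mathbf{t}|_{C_{i}}$ is a function of $E$ alone. Combining this with the stratum-preserving action of $G$ and of $e^{t\overrightarrow{H}_{\nu}}$ established in the previous step yields $\mathbf{t}\circ\varepsilon^{i}=\mathbf{t}$ and $\mathbf{t}\circ e^{t\overrightarrow{H}_{\nu}}=\mathbf{t}$, which is part~(1); and the consistent expression $\mathbf{t}=\mathbf{t}(E)$ across all strata (noting that $C_{3}$ and $C_{5}$ both correspond to $E=1$ with $\mathbf{t}=+\infty$) gives part~(2).

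I do not foresee a genuine obstacle, since once the $C_{i}$'s are shown to be $G$- and flow-invariant the result follows by the explicit formulas. The only point needing slight care is to confirm that the degenerate strata $C_{3},C_{4},C_{5}$ (where $E=\pm 1$ together with $c=0$ or $c\neq 0$) are individually preserved by the reflections; this is handled by observing that $\varepsilon^{i}$ sends the set $\{c=0\}$ to itself and permutes the points $\{\gamma=2\pi n\}$ among themselves and $\{\gamma=2\pi n+\pi\}$ among themselves, consistent with definitions (\ref{eq:2.8})--(\ref{eq:2.12}).
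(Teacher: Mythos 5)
Your proposal is correct and follows essentially the same route as the paper: both arguments rest on the invariance of the energy $E$ under the reflections $\varepsilon^{i}$ and the flow of $\overrightarrow{H}_{\nu}$, the resulting preservation of the strata $C_{i}$, and the explicit formulas (\ref{eq:ttC1})--(\ref{eq:ttC35}) together with the relations $k^{2}=(E+1)/2$ on $C_{1}$ and $k^{2}=2/(E+1)$ on $C_{2}$ expressing $\mathbf{t}$ as a function of $E$ on each stratum. Your extra remark on checking the degenerate strata individually is a harmless elaboration of what the paper states more tersely.
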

\begin{proof}
The reflections $\varepsilon^{i}\in G$ (\ref{eq:symm}) and the flow
of $\overrightarrow{H}_{\nu}$ preserve the subsets $C_{i}$ of the
cylinder $C$ and on each of these subsets, the function $\mathbf{t}$
is expressed as a function of the energy $E$ of the pendulum since
we have equalities (\ref{eq:ttC1})--(\ref{eq:ttC35}) and,
\begin{eqnarray*}
\lambda\in C_{1} & \implies & k=\sqrt{\frac{E+1}{2}},\\
\lambda\in C_{2} & \implies & k=\sqrt{\frac{2}{E+1}},\\
\lambda\in C_{4} & \implies & E=-1,\\
\lambda\in C_{3}\cup C_{5} & \implies & E=1.
\end{eqnarray*}
This proves item (2) of this proposition. Item (1) follows since the
energy $E$ is invariant w.r.t. $\varepsilon^{i}$ and $\overrightarrow{H}_{\nu}$.
\hfill$\square$ 
\end{proof}

A plot of $\mathbf{t}(E)$ is shown in Figure \ref{fig:tt}. Regularity
properties of the function $\mathbf{t}(E)$ visible in its plot are proved in the following statement.
\begin{figure}
\begin{centering}
\includegraphics{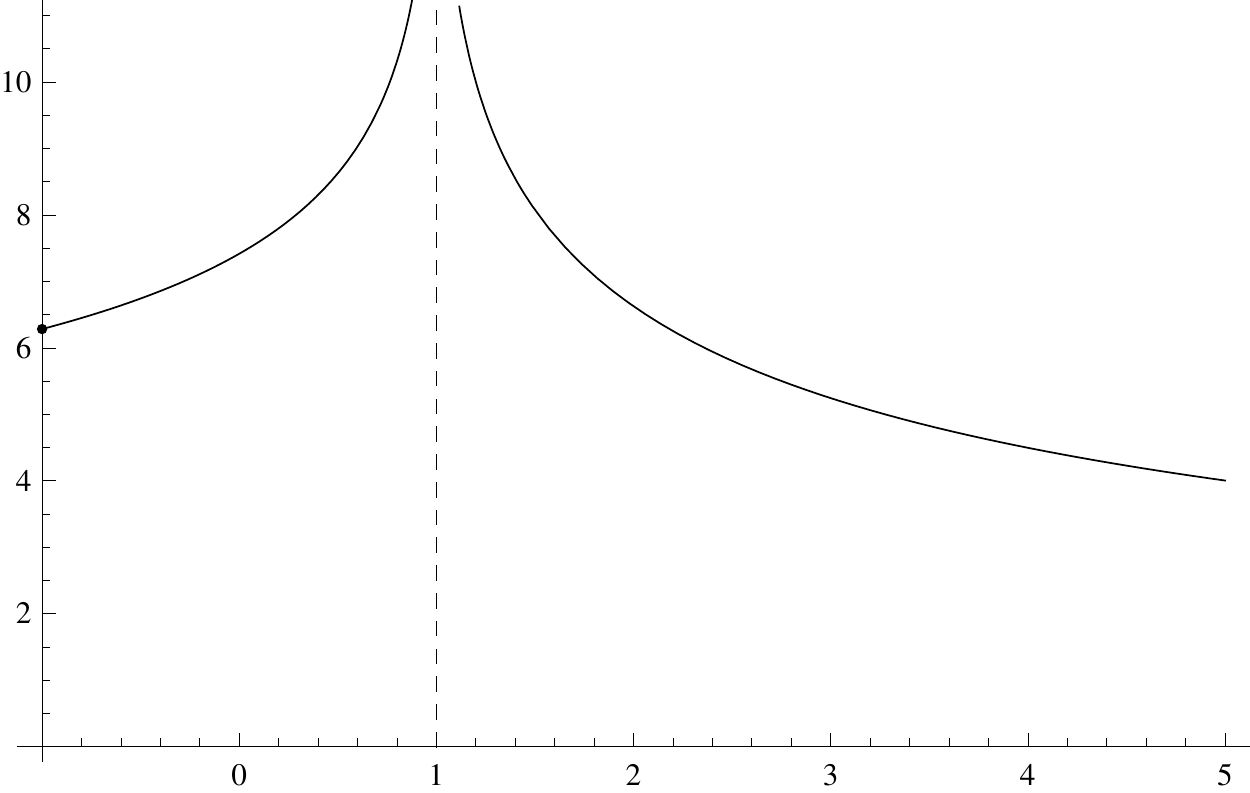}
\par\end{centering}

\protect\caption{\label{fig:tt}Plot of the function $\mathbf{t}(E)$}
\end{figure}

\begin{proposition}
\label{prop:tt-reg}$\qquad$\\

\text{\emph{(1)}}$\quad$The function $\mathbf{t}(\lambda)$ is smooth
on $C_{1}\cup C_{2}$. 

\text{\emph{(2)}}$\quad$$\lim_{E\rightarrow-1}\mathbf{t}(E)=2\pi,\quad\lim_{E\rightarrow1}\mathbf{t}(E)=+\infty,\quad\lim_{E\rightarrow+\infty}\mathbf{t}(E)=0$.

\text{\emph{(3)}}$\quad$The function $\mathbf{t}:C\rightarrow(0,+\infty]$
is continuous.\end{proposition}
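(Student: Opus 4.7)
The plan is to treat the three items in order, using the explicit expressions \text{(\ref{eq:ttC1})}--\text{(\ref{eq:ttC35})} and the classical regularity of the complete elliptic integral $K(k)$ on $(0,1)$.

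\textbf{Item (1).} On $C_1$ the reparametrization $E\mapsto k=\sqrt{(E+1)/2}$ is an analytic diffeomorphism $(-1,1)\to(0,1)$, and on $C_2$ the map $E\mapsto k=\sqrt{2/(E+1)}$ is an analytic diffeomorphism $(1,+\infty)\to(0,1)$. Since $K(k)$ is real analytic on $(0,1)$, the compositions $E\mapsto 4K(k(E))$ and $E\mapsto 4k(E)K(k(E))$ are smooth on the respective intervals. By Proposition~\ref{prop:tt_invar}(2), $\mathbf{t}$ is a function of $E$, and $E=c^{2}/2-\cos\gamma$ is smooth on $C$, so $\mathbf{t}$ is smooth on $C_1\cup C_2$ as a function of $\lambda=(\gamma,c)$.

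\textbf{Item (2).} I would use the standard boundary values of $K(k)$: $K(0)=\pi/2$ and $K(k)\sim\tfrac12\ln\bigl(16/(1-k^{2})\bigr)\to+\infty$ as $k\to1^{-}$. Thus as $E\to-1^{+}$ in $C_1$ we have $k\to0^{+}$ and $\mathbf{t}=4K(k)\to2\pi$; as $E\to 1^{-}$ in $C_1$ we have $k\to1^{-}$ and $\mathbf{t}=4K(k)\to+\infty$, while as $E\to 1^{+}$ in $C_2$ we have $k\to1^{-}$ and $\mathbf{t}=4kK(k)\to+\infty$ as well; finally as $E\to+\infty$ in $C_2$ we have $k\to0^{+}$ so $\mathbf{t}=4kK(k)\to 4\cdot0\cdot\tfrac{\pi}{2}=0$. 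All three limits in~(2) follow.

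\textbf{Item (3).} Continuity on each open stratum $C_1,C_2$ is immediate from~(1). The remaining task is continuity at the boundary strata $C_3,C_4,C_5$, where I regard the range $(0,+\infty]$ with its usual one-point-compactification topology. Take $\lambda_0\in C_4$, so $E(\lambda_0)=-1$, and any sequence $\lambda_n\to\lambda_0$ in $C$. Since $E$ is continuous and $\lambda_n$ eventually lies in $C_1\cup C_4$ (because $E(\lambda_0)=-1$ is the minimum of $E$), either $\mathbf{t}(\lambda_n)=2\pi$ or $\mathbf{t}(\lambda_n)=4K(k(E(\lambda_n)))\to 2\pi$ by~(2); hence $\mathbf{t}(\lambda_n)\to 2\pi=\mathbf{t}(\lambda_0)$. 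If instead $\lambda_0\in C_3\cup C_5$, then $E(\lambda_0)=1$ and for $\lambda_n\to\lambda_0$ we have $E(\lambda_n)\to 1$; on the subsequence inside $C_1$ this gives $k\to 1^{-}$ and $4K(k)\to+\infty$, on the subsequence inside $C_2$ similarly $4kK(k)\to+\infty$, and on subsequences inside $C_3\cup C_5$ the value is already $+\infty$; so $\mathbf{t}(\lambda_n)\to+\infty=\mathbf{t}(\lambda_0)$.

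The only nontrivial point is the matching of one-sided limits at $E=1$ from $C_1$ and $C_2$, and the use of the extended topology at $+\infty$; everything else reduces to continuity of $K(k)$ and of the energy $E$. I do not expect any serious obstacle here, since Proposition~\ref{prop:tt_invar}(2) has already eliminated the dependence of $\mathbf{t}$ on the phase-cylinder coordinates other than through $E$.
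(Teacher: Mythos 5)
Your proposal is correct and follows essentially the same route as the paper: express $\mathbf{t}$ as a function of the energy $E$ via Proposition~\ref{prop:tt_invar}(2), use the boundary behaviour $K(0)=\pi/2$ and $K(k)\to+\infty$ as $k\to1^{-}$ for the limits, and deduce continuity at $C_{3}\cup C_{4}\cup C_{5}$ from continuity of $E$. The extra details you supply (analyticity of the reparametrizations $E\mapsto k$, the subsequence bookkeeping near $C_{4}$, and the matching of one-sided limits at $E=1$) are all sound refinements of the same argument.
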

\begin{proof}
Item (1) follows from (\ref{eq:ttC1}) and (\ref{eq:ttC2}). The limits
in item (2) follow from (\ref{eq:ttC1}) and (\ref{eq:ttC2}), and
from the limits $\lim_{k\rightarrow+0}K(k)=\frac{\pi}{2},\quad\lim_{k\rightarrow1-}K(k)=+\infty$.
Then continuity of $\mathbf{t}(\lambda)$ follows on $C_{4}$:
\[
\lambda\rightarrow\bar{\lambda}\in C_{4}\implies E(\lambda)\rightarrow E(\bar{\lambda})=-1\implies\mathbf{t}(\lambda)\rightarrow2\pi=\mathbf{t}(\bar{\lambda}).
\]
Continuity on $C_{3}\cup C_{5}$ follows since
\[
\lambda\rightarrow\bar{\lambda}\in C_{3}\cup C_{5}\implies E(\lambda)\rightarrow E(\bar{\lambda})=1\implies\mathbf{t}(\lambda)\rightarrow+\infty=\mathbf{t}(\bar{\lambda}).
\]
Thus $\mathbf{t}(\lambda)$ is continuous on $C$ and item (3) is
proved. \hfill$\square$
\end{proof}

\subsection{Decompositions in the Image of the Exponential Mapping}

Consider the set $\widehat{M}=M\backslash\{q_{0}\}$. From Filippov's
theorem and Pontryagin's Maximum Principle \cite{agrachev_sachkov},
we already know that any point $q\in\widehat{M}$ can be joined with
$q_{0}$ by an optimal trajectory $q(s)=\mathrm{Exp}(\lambda,s)$
such that $q(t)=q,\quad(\lambda,t)\in N$. Then $\mathrm{Exp}(N)\supset\widehat{M}$.
However the Maxwell points $q\in\widehat{M}$ have non unique preimage
under the exponential mapping. Hence the mapping $\mathrm{Exp}:N\rightarrow\widehat{M}$
is surjective, but not injective. In order to separate Maxwell points
we consider the set that contains all such points:
\[
M^{\prime}=\left\{ q\in M\quad\mid\quad z=0,\quad x^{2}+y^{2}\neq0\right\} ,
\]
and its complement $\widetilde{M}$ in $\widehat{M}$:
\begin{align*}
\widetilde{M} & =\left\{ q\in M\quad\mid\quad z\neq0\right\} ,\\
\widehat{M} & =\widetilde{M}\sqcup M^{\prime},
\end{align*}
where $\sqcup$ is the union of disjoint sets. 

\subsubsection{Decompositions in $\widetilde{M}$}

The plane $z=0$ cuts the domain $\widetilde{M}$ into two half spaces
as: 
\begin{eqnarray}
\widetilde{M} & = & M_{1}\sqcup M_{2},\nonumber \\
M_{1} & = & \left\{ q\in M\quad\mid\quad z>0\right\} ,\label{eq:M1}\\
M_{2} & = & \left\{ q\in M\quad\mid\quad z<0\right\} .\label{eq:M2}
\end{eqnarray}
Note that the decomposition of the manifold $M$ is simpler in description
of cut time on $\mathrm{SH}(2)$ than similar decomposition of $M$
in related problems on $\mathrm{SE(2)}$ \cite{cut_sre1} and on the
Engel group \cite{cut_engel}. 
\begin{proposition}
\label{prop:Ref_Mi}Reflections $\varepsilon^{j}\in G$ permute the
domains $M_{1}$ and $M_{2}$ according to Table \text{\emph{\ref{tab:1}}}.
\begin{table}
\centering{}%
\begin{tabular}{|c|c|}
\hline 
$\mathrm{Id}$,$\varepsilon^{1}$,$\varepsilon^{6}$,$\varepsilon^{7}$ & $\varepsilon^{2}$,$\varepsilon^{3}$,$\varepsilon^{4}$,$\varepsilon^{5}$\tabularnewline
\hline 
$M_{1}$ & $M_{2}$\tabularnewline
\hline 
$M_{2}$ & $M_{1}$\tabularnewline
\hline 
\end{tabular}\protect\caption{\label{tab:1}Action of $\varepsilon^{i}$ on $M_{j}$}
\end{table}
\end{proposition}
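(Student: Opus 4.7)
The plan is to reduce the proposition to a direct inspection of the $z$-component of the reflection formulas (\ref{eq:symm_M}), since the domains $M_1$ and $M_2$ are defined purely by the sign of $z$ (see (\ref{eq:M1})--(\ref{eq:M2})).

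First, I would read off from (\ref{eq:symm_M}) the action of each $\varepsilon^i$ on the $z$-coordinate. A quick check shows $z^1=z$, $z^2=-z$, $z^3=-z$, $z^4=-z$, $z^5=-z$, $z^6=z$, $z^7=z$. Grouping by whether $z$ is preserved or reversed, this yields exactly the two columns of Table \ref{tab:1}: the reflections $\mathrm{Id}$, $\varepsilon^1$, $\varepsilon^6$, $\varepsilon^7$ fix the sign of $z$, hence they map the half-space $\{z>0\}=M_1$ to itself and $\{z<0\}=M_2$ to itself; the reflections $\varepsilon^2$, $\varepsilon^3$, $\varepsilon^4$, $\varepsilon^5$ reverse the sign of $z$, hence they interchange $M_1$ and $M_2$.

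Since each $\varepsilon^i$ is an involution (or, more generally, a bijection of $M$) that preserves the open condition $z\neq 0$ defining $\widetilde{M}=M_1\sqcup M_2$, sending one half-space into the other is automatically a bijection between them. This establishes both the rows of the table simultaneously and completes the proof.

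The only potential obstacle is a bookkeeping one, namely making sure the labelling of $\varepsilon^i$ used here matches the one fixed in (\ref{eq:symm}) and (\ref{eq:symm_M}); once that is checked, the result is immediate and no computation beyond reading off the third coordinate is required.
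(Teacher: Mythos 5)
Your proposal is correct and is exactly the argument the paper intends: the paper's proof simply says the claim ``follows immediately from the definitions of the actions of reflections (\ref{eq:symm_M})'', and your reading off of the third coordinate ($z^{1}=z^{6}=z^{7}=z$, $z^{2}=z^{3}=z^{4}=z^{5}=-z$) together with the sign-of-$z$ definitions of $M_{1}$ and $M_{2}$ is precisely that verification.
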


\begin{proof}
Follows immediately from the definitions of the actions of reflections
(\ref{eq:symm_M}).\hfill$\square$\end{proof}

\begin{proposition}
\label{prop:Mi-topol}The domains $M_{1},M_{2}$ are open, connected
and simply connected. \end{proposition}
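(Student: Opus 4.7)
The plan is to reduce the statement to an elementary topological fact about half-spaces in $\mathbb{R}^{3}$. Recall from (\ref{eq:2.2}) that $M=\mathrm{SH}(2)$ is identified, as a smooth manifold, with $\mathbb{R}^{3}$ via the global coordinate chart $(x,y,z)$. Under this identification, the sets defined in (\ref{eq:M1})--(\ref{eq:M2}) are simply the two open half-spaces
\[
M_{1}=\{(x,y,z)\in\mathbb{R}^{3}\mid z>0\},\qquad M_{2}=\{(x,y,z)\in\mathbb{R}^{3}\mid z<0\}.
\]

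Openness would be dispatched first: each $M_{i}$ is the preimage of an open half-line under the continuous coordinate projection $q\mapsto z$, so each is open in $M$. For the remaining two properties I would exploit convexity. Each half-space is a convex (indeed, linear) subset of $\mathbb{R}^{3}$, hence path-connected via straight line segments, and star-shaped with respect to any of its points, hence contractible. Contractibility of $M_{i}$ immediately gives triviality of $\pi_{0}(M_{i})$ and $\pi_{1}(M_{i})$, which is exactly connectedness and simple connectedness. Equivalently, one may write down an explicit diffeomorphism, e.g.\ $(x,y,z)\mapsto(x,y,\ln z)$ on $M_{1}$ and $(x,y,z)\mapsto(x,y,-\ln(-z))$ on $M_{2}$, exhibiting each $M_{i}$ as diffeomorphic to $\mathbb{R}^{3}$, from which all three conclusions follow at once.

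Since the argument is purely topological and uses nothing beyond the linearity of the defining inequality $z\gtrless 0$, there is no substantive obstacle. The only point worth flagging is that the identification $M\cong\mathbb{R}^{3}$ coming from (\ref{eq:2.2}) is a global diffeomorphism (and not just a set-theoretic bijection), so the group topology on $\mathrm{SH}(2)$ agrees with the standard topology on $\mathbb{R}^{3}$; this is guaranteed by the construction of the $(x,y,z)$ coordinates in Section~\ref{sec:Problem-Statement} and justifies transporting the topological properties of Euclidean half-spaces back to $M_{1}$ and $M_{2}$.
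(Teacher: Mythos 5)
Your proof is correct and follows essentially the same route as the paper: both arguments reduce the claim to the observation that each $M_{i}$ is an open half-space in the global chart $M\cong\mathbb{R}^{3}$, hence homeomorphic to $\mathbb{R}^{3}$ and therefore open, connected and simply connected. Your version merely spells out the convexity/contractibility details and an explicit diffeomorphism that the paper leaves implicit.
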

\begin{proof}
From the definition of the sets $M_{1}$, $M_{2}$ (\ref{eq:M1})--(\ref{eq:M2})
it follows that the domains $M_{i}$ are homeomorphic to $\mathbb{R}^{3}$
and therefore they are open, connected and simply connected. \hfill$\square$
\end{proof}
\subsection{Decomposition in the Preimage of the Exponential Mapping}

We now consider the following set $\widehat{N}\subset N$ corresponding
to all potentially optimal geodesics: 
\[
\widehat{N}=\left\{ (\lambda,t)\in N\quad\mid\quad t\leq\mathbf{t}(\lambda)\right\} .
\]
By existence of the optimal geodesics, $\mathrm{Exp}(\widehat{N})\supset\widehat{M}$.
In order to separate the Maxwell points in the preimage of the exponential
mapping, introduce further the sets:
\begin{align*}
\widehat{N} & =\widetilde{N}\sqcup N^{\prime},\\
N^{\prime} & =\left\{ (\lambda,t)\in\cup{}_{i=1}^{3}\widehat{N}_{i}\quad\mid\quad t=\mathbf{t}(\lambda)\textrm{ or }\sin\frac{\gamma_{t/2}}{2}=0\right\} \cup\widehat{N}_{4},\\
\widehat{N}_{i} & =N_{i}\cap\widehat{N},\quad i=1,\ldots,4,\\
\widetilde{N} & =\left\{ (\lambda,t)\in\cup_{i=1}^{3}N_{i}\quad\mid\quad t<\mathbf{t}(\lambda),\quad\sin\frac{\gamma_{t/2}}{2}\neq0\right\} \cup N_{5}.
\end{align*}

\subsubsection{Decomposition in $\widetilde{N}$}

We now introduce the connected components $D_{i}$ of the set $\widetilde{N}$:
\begin{eqnarray*}
\widetilde{N} & = & D_{1}\sqcup D_{2},\\
D_{1} & = & \left\{ (\lambda,t)\in\cup_{i=1}^{3}N_{i}\quad\mid\quad t<\mathbf{t}(\lambda),\quad\sin\left(\frac{\gamma_{t/2}}{2}\right)>0\right\} ,\\
D_{2} & = & \left\{ (\lambda,t)\in\cup_{i=1}^{3}N_{i}\quad\mid\quad t<\mathbf{t}(\lambda),\quad\sin\left(\frac{\gamma_{t/2}}{2}\right)<0\right\} ,
\end{eqnarray*}
where $D_{i}$ are defined explicitly in coordinates in Table \ref{tab:2}
(in the sets $N_{1},N_{2},N_{3}$). Projections of the sets $D_{i}$
to the initial phase cylinder are shown in Figure \ref{fig:Projs_Di}.
We note that for $t<\mathbf{t}(\lambda)=t_{1}^{\mathrm{Max}}(\lambda)$
the values of $p$ are given from formulas (\ref{eq:2.29})--(\ref{eq:2.30}),
and the values of $t_{1}^{\mathrm{Max}}(\lambda)$ are given in (\ref{eq:2.20})--(\ref{eq:2.24}).
The values of $\tau$ in Table \ref{tab:2} were calculated by using
the definition of elliptic coordinates \cite{Extremal_Pseudo_Euclid},
formulas for Jacobi elliptic functions \cite{Table_Int} and values
of $\gamma$ and $c$ from Figure \ref{fig:Decomposition}. Note that
enumeration of the sets $D_{i}$ is chosen to correspond to the sets
$M_{i}$ for further analysis.

\begin{figure}
\begin{centering}
\includegraphics[scale=0.6]{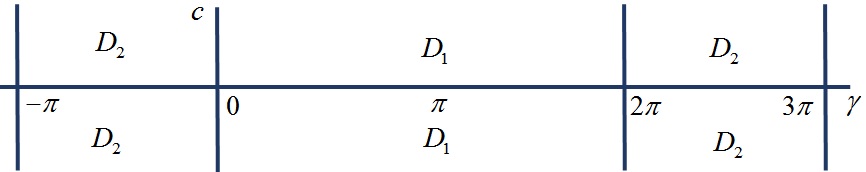}\protect\caption{\label{fig:Projs_Di}Projections of $D_{i}$ to Phase Cylinder $C$
of the Pendulum at $t=0$}

\par\end{centering}

\end{figure}
\begin{table}
\centering{}%
\begin{tabular}{|c||c||c||c||c|}
\hline 
$D_{i}$ & \multicolumn{2}{c||}{$D_{1}$} & \multicolumn{2}{c|}{$D_{2}$}\tabularnewline
\hline 
\hline 
$\begin{array}{c}
\lambda\\
p\\
\tau
\end{array}$ & $\begin{array}{c}
C_{1}^{0}\\
(0,2K)\\
(0,2K)
\end{array}$ & $\begin{array}{c}
C_{1}^{1}\\
(0,2K)\\
(2K,4K)
\end{array}$ & $\begin{array}{c}
C_{1}^{0}\\
(0,2K)\\
(2K,4K)
\end{array}$ & $\begin{array}{c}
C_{1}^{1}\\
(0,2K)\\
(0,2K)
\end{array}$\tabularnewline
\hline 
\hline 
$\begin{array}{c}
\lambda\\
p\\
\tau
\end{array}$ & $\begin{array}{c}
C_{2}^{+}\\
(0,2K)\\
(0,2K)
\end{array}$ & $\begin{array}{c}
C_{2}^{-}\\
(0,2K)\\
(-2K,0)
\end{array}$ & $\begin{array}{c}
C_{2}^{+}\\
(0,2K)\\
(2K,4K)
\end{array}$ & $\begin{array}{c}
C_{2}^{-}\\
(0,2K)\\
(0,2K)
\end{array}$\tabularnewline
\hline 
\hline 
$\begin{array}{c}
\lambda\\
p\\
\tau
\end{array}$ & $\begin{array}{c}
C_{3}^{0+}\cup C_{3}^{1-}\\
(0,+\infty)\\
(0,+\infty)
\end{array}$ & $\begin{array}{c}
C_{3}^{0-}\cup C_{3}^{1+}\\
(0,+\infty)\\
(-\infty,0)
\end{array}$ & $\begin{array}{c}
C_{3}^{0+}\cup C_{3}^{1-}\\
(0,+\infty)\\
(-\infty,0)
\end{array}$ & $\begin{array}{c}
C_{3}^{0-}\cup C_{3}^{1+}\\
(0,+\infty)\\
(0,+\infty)
\end{array}$\tabularnewline
\hline 
\end{tabular}\protect\caption{\label{tab:2}Decomposition $\widetilde{N}=\cup_{i=1}^{2}D_{i}$}
\end{table}
We now establish an important fact about the domains $D_{i}$ that is vital in proving that the exponential mapping transforms $D_{i}$ diffeomorphically.
\begin{proposition}
\label{prop:Ref_Di}Reflections $\varepsilon^{j}\in G$ permute the domains $D_{1}$ and $D_{2}$ as shown in Table \text{\emph{\ref{tab:3}}}.
\begin{table}
\centering{}%
\begin{tabular}{|c|c|}
\hline 
$\mathrm{Id}$,$\varepsilon^{1}$,$\varepsilon^{6}$,$\varepsilon^{7}$ & $\varepsilon^{2}$,$\varepsilon^{3}$,$\varepsilon^{4}$,$\varepsilon^{5}$\tabularnewline
\hline 
$D_{1}$ & $D_{2}$\tabularnewline
\hline 
$D_{2}$ & $D_{1}$\tabularnewline
\hline 
\end{tabular}\protect\caption{\label{tab:3}Action of $\varepsilon^{i}$ on $D_{j}\subset\widetilde{N}$}
\end{table}
\end{proposition}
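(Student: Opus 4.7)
The plan is to reduce everything to tracking the sign of $\sin(\gamma_{t/2}/2)$ under each reflection, since this is the only feature distinguishing $D_1$ from $D_2$ inside $\widetilde N$. First I would check that the remaining defining constraints of $\widetilde N$ are automatically preserved. Every reflection $\varepsilon^i \in G$ fixes the pendulum energy $E = c^2/2 - \cos\gamma$, hence preserves each stratum $C_1,\ldots,C_5$ and in particular the union $C_1 \cup C_2 \cup C_3$; by Proposition \ref{prop:tt_invar}(1), $\mathbf{t}$ is $\varepsilon^i$-invariant, so the inequality $t < \mathbf{t}(\lambda)$ also survives. Thus the lifted action $\varepsilon^i : (\lambda,t) \mapsto (\lambda^i,t)$ carries the ambient set $D_1 \sqcup D_2$ into itself, and the only question is which component receives each point.

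Next I would invoke the midpoint-covariance formula that is built into the construction of the Maxwell reflections in the vertical subsystem (see \cite{Extremal_Pseudo_Euclid,Max_Conj_SH2}): the reflected initial covector $\lambda^i$ is defined so that the pendulum flow at the half-time obeys
\[
(\gamma_{t/2}^i, c_{t/2}^i) \;=\; \varepsilon^i(\gamma_{t/2}, c_{t/2}),
\]
with $\varepsilon^i$ acting on $(\gamma,c)$ via \eqref{eq:symm}. Granting this, the proposition reduces to a trigonometric case check. For $\mathrm{Id}$ and $\varepsilon^1$ the first coordinate is unchanged, so $\sin(\gamma_{t/2}^i/2) = \sin(\gamma_{t/2}/2)$. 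For $\varepsilon^2$ and $\varepsilon^3$ one has $\gamma_{t/2}^i = -\gamma_{t/2}$ and the sign flips. For $\varepsilon^4$ and $\varepsilon^5$, $\gamma_{t/2}^i = \gamma_{t/2} + 2\pi$ and $\sin(\gamma_{t/2}/2 + \pi) = -\sin(\gamma_{t/2}/2)$, so again the sign flips. Finally, for $\varepsilon^6$ and $\varepsilon^7$, $\gamma_{t/2}^i = -\gamma_{t/2} + 2\pi$ and $\sin(-\gamma_{t/2}/2 + \pi) = \sin(\gamma_{t/2}/2)$, so the sign is preserved.

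Reading off these signs reproduces Table \ref{tab:3}: the four reflections $\mathrm{Id}, \varepsilon^1, \varepsilon^6, \varepsilon^7$ preserve $\mathrm{sgn}(\sin(\gamma_{t/2}/2))$ and hence map each $D_j$ onto itself, while $\varepsilon^2, \varepsilon^3, \varepsilon^4, \varepsilon^5$ flip the sign and hence interchange $D_1$ and $D_2$. The main obstacle is justifying the midpoint-covariance formula above; although this is standard for this series of problems, it has to be verified case by case by showing that the prescribed transformation of initial conditions intertwines correctly with the pendulum flow $e^{s \overrightarrow H_\nu}$ (directly for the space reflections, and via a time reversal centred at $s = t/2$ for the $c$-reversing ones). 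Once this intertwining is in hand, the proposition is immediate.
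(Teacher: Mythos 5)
Your proposal is correct and takes essentially the same approach as the paper: both arguments rest on the $\mathbf{t}$-invariance of Proposition \ref{prop:tt_invar} together with the midpoint covariance $\lambda^{i}_{t/2}=\varepsilon^{i}(\lambda_{t/2})$, obtained by intertwining the reflection with the pendulum flow ($F\circ e^{s\overrightarrow{H}_{\nu}}=e^{-s\overrightarrow{H}_{\nu}}\circ F$ when $F_{*}\overrightarrow{H}_{\nu}=-\overrightarrow{H}_{\nu}$), followed by reading off the sign of $\sin(\gamma_{t/2}/2)$; the paper writes out only the case $\varepsilon^{2}$ and declares the rest similar, while you make the eight-fold sign check explicit. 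The one small imprecision is your phrase ``the $c$-reversing ones'': the relevant dichotomy is $\varepsilon^{j}_{*}\overrightarrow{H}_{\nu}=-\overrightarrow{H}_{\nu}$ for $j=1,2,5,6$ versus $+\overrightarrow{H}_{\nu}$ for $j=3,4,7$ (e.g.\ $\varepsilon^{2}$ reverses time without reversing $c$), though this does not affect your sign computations.
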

\begin{proof}
In paper \cite{Extremal_Pseudo_Euclid} we defined the action of reflections
$\varepsilon^{j}:N\rightarrow N$ so that it satisfies the following
properties:
\begin{eqnarray*}
\varepsilon^{j}(\lambda,t) & = & \left(\varepsilon^{j}\circ e^{t\overrightarrow{H}_{\nu}}(\lambda),t\right),\quad\textrm{if \quad}\varepsilon_{*}^{j}\overrightarrow{H}_{\nu}=-\overrightarrow{H}_{\nu},\\
\varepsilon^{j}(\lambda,t) & = & \left(\varepsilon^{j}(\lambda),t\right),\quad\textrm{if \quad}\varepsilon_{*}^{j}\overrightarrow{H}_{\nu}=\overrightarrow{H}_{\nu},
\end{eqnarray*}
where $\varepsilon_{*}^{j}\left(\overrightarrow{H}_{\nu}\right)$
is the pushforward of $\overrightarrow{H}_{\nu}$ under the reflection
$\varepsilon^{j}$. Recall that $\varepsilon_{*}^{j}\overrightarrow{H}_{\nu}=-\overrightarrow{H}_{\nu},\textrm{ for }j=1,2,5,6$
because these symmetries reverse the direction of time and $\varepsilon_{*}^{j}\overrightarrow{H}_{\nu}=\overrightarrow{H}_{\nu},\textrm{ for }j=3,4,7$
because these symmetries preserve the direction of time \cite{Extremal_Pseudo_Euclid}.
Hence, it is sufficient to prove the case $\varepsilon^{2}(D_{1})=D_{2}$
as proof of all other cases $\varepsilon^{j}(D_{i})=D_{k}$ is similar.
In order to prove the inclusion $\varepsilon^{j}(D_{1})\subset D_{2}$
we take any $(\lambda,t)=(\gamma,c,t)\in D_{1}$ and prove that
\[
\varepsilon^{2}:(\lambda,t)\mapsto(\lambda^{2},t)=(\gamma^{2},c^{2},t)\in D_{2}.
\]
By Proposition \ref{prop:tt_invar},
\[
\mathbf{t}(\lambda^{2})=\mathbf{t}\circ\varepsilon^{2}\circ e^{t\overrightarrow{H}_{\nu}}(\lambda)=\mathbf{t}(\lambda).
\]
Thus $t<\mathbf{t}(\lambda)$. Moreover, at instant $t/2$ the trajectories
of the vertical subsystem are given as:
\begin{eqnarray*}
\lambda_{t/2} & = & (\gamma_{t/2},c_{t/2})=e^{\overrightarrow{H}_{\nu}t/2}(\lambda),\\
\lambda_{t/2}^{2} & = & \left(\gamma_{t/2}^{2},c_{t/2}^{2}\right)=e^{\overrightarrow{H}_{\nu}t/2}\left(\lambda^{2}\right),
\end{eqnarray*}
Since $\lambda^{2}=\varepsilon^{2}\circ e^{\overrightarrow{H}_{\nu}t}(\lambda)$,
we have
\begin{eqnarray}
\lambda_{t/2}^{2} & = & e^{\overrightarrow{H}_{\nu}t/2}\circ\varepsilon^{2}\circ e^{\overrightarrow{H}_{\nu}t}(\lambda)=\varepsilon^{2}\circ e^{-\overrightarrow{H}_{\nu}t/2}\circ e^{\overrightarrow{H}_{\nu}t}(\lambda)=\varepsilon^{2}\circ e^{\overrightarrow{H}_{\nu}t/2}(\lambda)=\varepsilon^{2}(\lambda_{t/2}).\label{eq:3.8}
\end{eqnarray}
In proof of (\ref{eq:3.8}) we used the fact that for any diffeomorphism
$F:M\rightarrow M$ and a vector field $\overrightarrow{V}$ on a
manifold $M$, $F_{*}\overrightarrow{V}=-\overrightarrow{V}\Longleftrightarrow F\circ e^{t\overrightarrow{V}}=e^{-t\overrightarrow{V}}\circ F$.
Clearly, $\varepsilon^{2}(\lambda_{t/2})=\left(\gamma_{t/2}^{2},c_{t/2}^{2}\right)$
and from (6.3) \cite{Extremal_Pseudo_Euclid} we have:
\[
\left(\gamma_{t/2}^{2},c_{t/2}^{2}\right)=\left(-\gamma_{t/2},c_{t/2}\right).
\]
Thus $\sin\frac{\gamma_{t/2}^{2}}{2}=\sin\frac{-\gamma_{t/2}}{2}<0$.
We proved that $(\lambda^{2},t)\in D_{2}$, thus $\varepsilon^{2}(D_{1})\subset D_{2}$.
 Similarly it follows that $\varepsilon^{2}(D_{2})\subset D_{1}$.
Since $\varepsilon^{2}\circ\varepsilon^{2}=\mathrm{Id}$, then $\varepsilon^{2}\circ\varepsilon^{2}(D_{1})=D_{1}\implies\varepsilon^{2}(D_{1})=D_{2}$.
\hfill$\square$\end{proof}

\begin{proposition}
\label{prop:Di-topol}The domains $D_{1},D_{2}\subset\widetilde{N}$
are open and connected. \end{proposition}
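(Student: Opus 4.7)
The plan is to prove openness and connectedness separately. Openness will follow directly from the continuity of the defining inequalities, while connectedness requires an explicit path construction that glues the $N_1$-, $N_2$-, and $N_3$-contributions to each $D_i$ through the separatrix.

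For openness, I would observe that $D_1$ and $D_2$ are cut out of $\cup_{i=1}^{3} N_i$ by two strict inequalities: $t < \mathbf{t}(\lambda)$ and $\sin(\gamma_{t/2}/2) > 0$ (respectively $<0$). The function $\mathbf{t}(\lambda)$ is continuous on $C$ by Proposition~\ref{prop:tt-reg}, and the map $(\lambda,t)\mapsto \gamma_{t/2}$ is smooth because it is the flow of the smooth vector field $\overrightarrow{H}_{\nu}$ evaluated at time $t/2$. Each inequality therefore defines an open subset of $\cup_{i=1}^{3}N_{i}$ in its relative topology, and $D_i$ is their intersection.

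For connectedness, I would use Table~\ref{tab:2} to describe $D_1$ explicitly as a union of product sets in the elliptic coordinates $(k,p,\tau)$ (and $(\varphi,p,\tau)$ on the separatrix), each of which is manifestly path-connected as a product of open intervals. The remaining task is to verify that these pieces lie in a single path-component. Any two points whose underlying covectors lie in $C_1$ and $C_2$ respectively can be joined by a one-parameter family whose energy $E$ moves monotonically through $E=1$, crossing the separatrix $C_3$; along such a deformation the pendulum trajectory and hence $\gamma_{t/2}$ varies continuously, so the strict inequality $\sin(\gamma_{t/2}/2)>0$ is preserved and the path remains in $D_1$. The $N_3$-cells of $D_1$ displayed in Table~\ref{tab:2} provide precisely the gluing layer for this transit.

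The hardest part is the sign-bookkeeping: one must check that the sign of $\sin(\gamma_{t/2}/2)$ on the four separatrix components $C_3^{0\pm}, C_3^{1\pm}$ assigned to $D_1$ really matches the signs on the adjacent $C_1^{0},C_1^{1},C_2^{+},C_2^{-}$ pieces, so that the $E\to 1$ limits assemble into a continuous path rather than jumping into $D_2$. This is a direct computation using the explicit formulas for $\gamma_{t/2}$ in each of the cases 1--3 of Section~\ref{sec:Previous-Work} together with the degenerations $\mathrm{sn}\,\varphi\to\tanh\varphi,\ \mathrm{cn}\,\varphi,\mathrm{dn}\,\varphi\to\operatorname{sech}\varphi$ as $k\to 1$. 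A useful cross-check is offered by Proposition~\ref{prop:Ref_Di}: the reflections $\varepsilon^{j}$ interchange $D_1$ and $D_2$, so once connectedness of $D_1$ is established it transports automatically to $D_2$.
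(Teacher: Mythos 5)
Your openness argument is fine: it is the same fact the paper uses, expressed directly through continuity of $\mathbf{t}$ (Proposition~\ref{prop:tt-reg}) and smoothness of $(\lambda,t)\mapsto\lambda_{t/2}$, plus the observation (needed, and worth stating explicitly) that $\cup_{i=1}^{3}N_{i}$ is itself open in $N$, since $C_{1}\cup C_{2}\cup C_{3}=\left\{ E>-1\right\} \setminus C_{5}$ and $C_{5}$ is a discrete set of points.

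The connectedness part, however, has a genuine gap. First, the one argument you actually give for why a deformation stays inside $D_{1}$ --- ``$\gamma_{t/2}$ varies continuously, so the strict inequality $\sin(\gamma_{t/2}/2)>0$ is preserved'' --- is a non sequitur: continuity of a function along a path does not preserve its sign; you must \emph{construct} the path so that the inequality holds throughout, and likewise for $t<\mathbf{t}(\lambda)$. Second, the step you defer as ``a direct computation'' --- checking that the Table~\ref{tab:2} cells over $C_{1}^{0},C_{1}^{1},C_{2}^{\pm}$ really abut the four separatrix cells over $C_{3}^{0\pm},C_{3}^{1\pm}$ with matching signs, in coordinates $(k,p,\tau)$ that degenerate as $k\to1$ because $K(k)\to\infty$ --- is precisely the content of the proposition, so deferring it leaves the proof incomplete. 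The paper sidesteps all of this with one move you should adopt: conjugate by the flow of $P=\frac{t}{2}\left(c\frac{\partial}{\partial\gamma}-\sin\gamma\frac{\partial}{\partial c}\right)$, i.e.\ pass to the coordinates $\left(\gamma_{t/2},c_{t/2},t\right)$. In these coordinates $D_{1}$ becomes $\widetilde{D}_{1}=\left\{ \sin\frac{\gamma}{2}>0,\ t<\mathbf{t}(\lambda)\right\} $, the subgraph of the continuous function $\mathbf{t}$ over the connected open strip $\left\{ \gamma\in(0,2\pi),\ c\in\mathbb{R}\right\} $, which is manifestly open and connected; pulling back along the diffeomorphism $e^{-P}$ finishes the proof with no case analysis and no crossing of the separatrix. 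Your appeal to Proposition~\ref{prop:Ref_Di} to transfer the result from $D_{1}$ to $D_{2}$ is exactly what the paper does and is not a mere cross-check --- keep it as the actual reduction.
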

\begin{proof}
Since $\varepsilon^{2}:N\rightarrow N$ is a diffeomorphism and $\varepsilon^{2}(D_{1})=D_{2}$
it suffices to prove that $D_{1}$ is open and connected. Consider
a vector field 
\[
P=\frac{t}{2}\left(c\frac{\partial}{\partial\gamma}-\sin\gamma\frac{\partial}{\partial c}\right)\in\mathrm{Vec}(N).
\]
The flow of this vector field $e^{P}$ is given as:
\[
e^{P}(\gamma,c,t)=e^{P}(\lambda,t)=\left(e^{\frac{t}{2}\overrightarrow{H}_{\nu}}(\lambda),t\right)=\left(\gamma_{t/2},c_{t/2},t\right).
\]
Thus $e^{P}(D_{1})=\widetilde{D}_{1}$ where 
\[
\widetilde{D}_{1}=\left\{ (\lambda,t)\in N\quad\mid\quad\sin\frac{\gamma}{2}>0,\quad t<\mathbf{t}(\lambda)\right\} .
\]
The set $\widetilde{D}_{1}$ is a subgraph of a continuous function
$\lambda\mapsto\mathbf{t}(\lambda)$ on an open connected 2-dimensional
domain $\left\{ (\gamma,c)\in C\quad\mid\quad\gamma\in(0,2\pi),\quad c\in\mathbb{R}\right\} $,
thus $\widetilde{D}_{1}$ is open and connected. Since $D_{1}=e^{-P}(\widetilde{D}_{1})$
therefore $D_{1}$ is also open and connected.  \hfill$\square$\end{proof}

\begin{proposition}
\label{prop:Exp Di Mi}There hold the inclusions:\end{proposition}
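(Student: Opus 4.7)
The plan is to prove the inclusions $\mathrm{Exp}(D_i) \subset M_i$ for $i = 1, 2$. The strategy is to establish the sign identity
\[
\mathrm{sgn}(z_t) \;=\; \mathrm{sgn}\!\left(\sin\tfrac{\gamma_{t/2}}{2}\right), \qquad (\lambda,t) \in D_1 \cup D_2;
\]
given this, the inclusions follow at once from the definitions of $D_{i}$ (in terms of $\sin\tfrac{\gamma_{t/2}}{2}$) and of $M_{i}$ (in terms of the sign of $z$).

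A useful first reduction uses the symmetries already in hand. By Propositions~\ref{prop:Ref_Mi} and~\ref{prop:Ref_Di}, the reflection $\varepsilon^{2}$ interchanges $M_{1}$ with $M_{2}$ and $D_{1}$ with $D_{2}$, and by construction $\mathrm{Exp}\circ\varepsilon^{2}=\varepsilon^{2}\circ\mathrm{Exp}$ (the action (\ref{eq:symm_M}) on $M$ is defined precisely so that this equivariance holds). Hence it suffices to prove only $\mathrm{Exp}(D_{1})\subset M_{1}$; the remaining inclusion then follows from
\[
\mathrm{Exp}(D_{2}) \;=\; \mathrm{Exp}(\varepsilon^{2}(D_{1})) \;=\; \varepsilon^{2}(\mathrm{Exp}(D_{1})) \;\subset\; \varepsilon^{2}(M_{1}) \;=\; M_{2}.
\]

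To establish $\mathrm{Exp}(D_{1})\subset M_{1}$ I would split along the strata $C_{1},C_{2},C_{3},C_{5}$ and use the explicit formulas (\ref{eq:2.13})--(\ref{eq:2.15}) and (\ref{eq:2.17}) for $z_{t}$. In $C_{5}$ the vertical subsystem is stationary and $z_{t}=\mathrm{sgn}(\sin\tfrac{\gamma}{2})\,t$, so the sign identity holds trivially. For $\lambda \in C_{1}\cup C_{2}\cup C_{3}$ I would pass to the coordinates (\ref{eq:2.29})--(\ref{eq:2.30}), in which $\tau$ corresponds to the pendulum state at the midpoint $t/2$ and $p$ to the half-time, and then apply the addition formulas for $\mathrm{sn},\mathrm{cn},\mathrm{dn}$ (respectively their hyperbolic degenerations in $C_{3}$) to recast $z_{t}$ in the factored form
\[
z_{t} \;=\; \sin\!\left(\tfrac{\gamma_{t/2}}{2}\right)\cdot F(\tau,p,k),
\]
with $F$ strictly positive on the admissible range $p \in (0, \mathbf{t}(\lambda)/2)$. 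The sign factors $s_{1},s_{2}$ and the sub-stratum labels from Table~\ref{tab:2} must be tracked stratum by stratum to absorb the ambiguities in the square-root extractions. Once this is done, $(\lambda,t)\in D_{1}$ forces $\sin(\gamma_{t/2}/2)>0$, whence $z_{t}>0$ and $\mathrm{Exp}(\lambda,t)\in M_{1}$.

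The main obstacle is the stratum-by-stratum verification of the positivity of $F$. This positivity depends essentially on the restriction $p<\mathbf{t}(\lambda)/2$, which prevents $z_{s}$ from acquiring an interior zero on $(0,t)$, in agreement with the characterization of $\mathbf{t}(\lambda)$ as the first Maxwell time along each extremal, at which $z$ returns to zero.
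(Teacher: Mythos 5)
Your treatment of the inclusion $\mathrm{Exp}(D_{i})\subset M_{i}$ is essentially the paper's own argument: reduce to $D_{1}$ via the reflection $\varepsilon^{2}$ (using Propositions \ref{prop:Ref_Mi} and \ref{prop:Ref_Di} and the equivariance of $\mathrm{Exp}$), then split along the strata $N_{1},N_{2},N_{3},N_{5}$ and read off the sign of $z_{t}$ from the factored formula. The sign identity you aim for is exactly what the paper's formula (\ref{eq:3.10}) delivers: since $\sin\frac{\gamma_{t/2}}{2}=s_{1}k\,\mathrm{sn}\,\tau$ and $\sinh z_{t}=s_{1}\frac{2k\,\mathrm{sn}\,p\,\mathrm{sn}\,\tau}{\Delta}$, one gets $\sinh z_{t}=\sin\bigl(\frac{\gamma_{t/2}}{2}\bigr)\cdot\frac{2\,\mathrm{sn}\,p}{\Delta}$ with the second factor positive for $p\in(0,2K(k))$, i.e. for $t<\mathbf{t}(\lambda)$. (Minor point: the clean factorization is for $\sinh z_{t}$ rather than $z_{t}$ itself, but since $\sinh$ is odd and increasing this changes nothing.)

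The gap is one of coverage: the proposition actually asserts three inclusions, and besides $\mathrm{Exp}(D_{i})\subset M_{i}$ (and the immediate consequence $\mathrm{Exp}(\widetilde{N})\subset\widetilde{M}$) it claims $\mathrm{Exp}(N^{\prime})\subset M^{\prime}$. This last inclusion is not a sign statement and does not follow from your factorization. On $N^{\prime}$ one has $t=\mathbf{t}(\lambda)$ or $\sin\frac{\gamma_{t/2}}{2}=0$, which by the same formula forces $z_{t}=0$; but membership in $M^{\prime}$ additionally requires $x^{2}+y^{2}\neq0$, and the paper proves this via the identity $R_{2}(q_{t})=\frac{2s_{1}}{1-k^{2}}\,\mathrm{dn}\,\tau\,f_{2}(p)$ with $f_{2}(p)=\mathrm{dn}\,p\,\mathrm{E}(p)-k^{2}\mathrm{sn}\,p\,\mathrm{cn}\,p\neq0$, plus the separate observation that on $\widehat{N}_{4}$ the trajectory is $(t,0,0)$. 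You would need to add this non-vanishing argument to complete the proof of the full statement.
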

\begin{enumerate}
\item[(1)] $\mathrm{Exp}(D_{i})\subset M_{i},\quad i=1,2,$
\item[(2)] $\mathrm{Exp}(\widetilde{N})\subset\widetilde{M},$
\item[(3)] $\mathrm{Exp}(N^{\prime})\subset M^{\prime}.$\end{enumerate}
\begin{proof}
$\qquad$\end{proof}

\begin{enumerate}
\item[(1)] It suffices to prove only that $\mathrm{Exp}(D_{1})\subset M_{1}$,
in view of the reflections $\varepsilon^{j}$. Notice the decomposition:
\begin{equation}
D_{1}=\left(D_{1}\cap N_{1}\right)\sqcup\left(D_{1}\cap N_{2}\right)\sqcup\left(D_{1}\cap N_{3}\right)\sqcup\left(D_{1}\cap N_{5}\right).\label{eq:D1decomp}
\end{equation}
Let $(\lambda,t)\in D_{1}\cap N_{1}=\left\{ (\lambda,t)\in N_{1}\quad\mid\quad t<\mathbf{t}(\lambda),\quad\sin\frac{\gamma_{t/2}}{2}>0\right\} ,$
thus $p=\frac{t}{2}\in(0,2K(k))$. Further, from formula (5.3) \cite{Extremal_Pseudo_Euclid}
we have $s_{1}\mathrm{sn}\tau>0$. Now recall formula (3.2) \cite{Max_Conj_SH2}:
\begin{equation}
\sinh z_{t}=s_{1}\frac{2k\,\mathrm{sn}p\:\mathrm{sn}\tau}{\Delta},\quad\Delta=1-k^{2}\mathrm{sn^{2}}p\,\mathrm{sn^{2}}\tau.\label{eq:3.10}
\end{equation}
Then we get $\sinh z_{t}>0$, thus $z_{t}>0$, i.e., $\mathrm{Exp}(\lambda,t)\in M_{1}$.
We proved that $\mathrm{Exp}(D_{1}\cap N_{1})\subset M_{1}$. All
other required inclusions $\mathrm{Exp}(D_{1}\cap N_{j})\subset M_{1},\quad j=2,3,5$,
are proved similarly, and the inclusion $\mathrm{Exp}(D_{1})\subset M_{1}$
follows.
\item[(2)] Since $\widetilde{N}=D_{1}\cup D_{2}$ and $\widetilde{M}=M_{1}\cup M_{2}$,
the inclusion $\mathrm{Exp}(\widetilde{N})\subset\widetilde{M}$ follows
from item (1).
\item[(3)] We have $N^{\prime}=\left(N^{\prime}\cap N_{1}\right)\sqcup\left(N^{\prime}\cap N_{2}\right)\sqcup\left(N^{\prime}\cap N_{3}\right)\sqcup N_{4}$.
\\
Let $(\lambda,t)\in N^{\prime}\cap N_{1}=\left\{ (\lambda,t)\in\widehat{N}_{1}\quad\mid\quad t=\mathbf{t}(\lambda)\textrm{ or }\sin\frac{\gamma_{t/2}}{2}=0\right\} ,$
then similarly to the proof of item (1) we get $p=2K(k)$ or $\mathrm{sn}\tau=0$,
thus $z_{t}=0$ by (\ref{eq:3.10}). From (3.6) \cite{Max_Conj_SH2}
we get $R_{2}(q_{t})=\frac{2s_{1}}{1-k^{2}}\mathrm{dn}\tau\, f_{2}(p)\neq0$,
and therefore $x^{2}+y^{2}\neq0$. We proved that $\mathrm{Exp}(N^{\prime}\cap N_{1})\subset M^{\prime}$.
It follows similarly that $\mathrm{Exp}(N^{\prime}\cap N_{j})\subset M^{\prime},\quad j=2,3$.
Finally, if $(\lambda,t)\in\widehat{N}_{4},$ then 
\[
q_{t}=(x_{t},y_{t},z_{t})=(t,0,0)\in M^{\prime}.
\]
Consequently, $\mathrm{Exp}(N^{\prime})\subset M^{\prime}$.\hfill$\square$\end{enumerate}
\begin{theorem}
\label{thm:Max_Conj}For $\lambda\in\cup_{i=1}^{5}C_{i}$, we have
$t_{1}^{\mathrm{conj}}(\lambda)\geq t_{1}^{\mathrm{Max}}(\lambda)$. \end{theorem}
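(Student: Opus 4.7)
The plan is to verify the inequality $t_{1}^{\mathrm{conj}}(\lambda)\geq t_{1}^{\mathrm{Max}}(\lambda)$ by a direct case analysis over the strata $C_{1},\dots,C_{5}$ of the phase cylinder, using the explicit values of $t_{1}^{\mathrm{Max}}$ in \eqref{eq:2.20}--\eqref{eq:2.24} and the bounds on $t_{1}^{\mathrm{conj}}$ in \eqref{eq:2.25}--\eqref{eq:2.28} recalled from \cite{Max_Conj_SH2}. No new estimate is needed; the statement is a bookkeeping corollary of those known bounds.

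For $\lambda\in C_{1}$, the Maxwell time equals $4K(k)$, and \eqref{eq:2.25} gives $t_{1}^{\mathrm{conj}}(\lambda)\geq 4K(k)$, so the inequality holds with equality on the lower side. For $\lambda\in C_{2}$, similarly $t_{1}^{\mathrm{Max}}(\lambda)=4kK(k)$ and \eqref{eq:2.26} yields $t_{1}^{\mathrm{conj}}(\lambda)\geq 4kK(k)$. For $\lambda\in C_{4}$, both quantities equal $2\pi$ by \eqref{eq:2.27} and \eqref{eq:ttC4}, again giving the desired inequality trivially. Finally, for $\lambda\in C_{3}\cup C_{5}$, we have $t_{1}^{\mathrm{Max}}(\lambda)=t_{1}^{\mathrm{conj}}(\lambda)=+\infty$ by \eqref{eq:2.24} and \eqref{eq:2.28}, and the statement is again immediate (with the convention $+\infty\geq +\infty$).

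There is no real obstacle here, since the heavy lifting was done in \cite{Max_Conj_SH2}: the nontrivial facts are (i) the bound $p_{1}^{1}(k)>2K(k)$, which ensures $2p_{1}^{1}(k)\geq 4K(k)$ on $C_{1}$ and $2kp_{1}^{1}(k)\geq 4kK(k)$ on $C_{2}$, and (ii) the exact equality $t_{1}^{\mathrm{conj}}=2\pi$ on $C_{4}$. Both are already established in the prior paper. I would therefore present the proof as four short bullet-style paragraphs, one per stratum, each citing the relevant previously stated equation. The importance of this theorem for what follows is that it justifies replacing $\min(t_{1}^{\mathrm{Max}},t_{1}^{\mathrm{conj}})$ by $t_{1}^{\mathrm{Max}}$ in the definition of $\mathbf{t}(\lambda)$, and it guarantees non-degeneracy of $\mathrm{Exp}$ on $\widetilde{N}$, which is exactly property \textbf{P3} needed for the Hadamard diffeomorphism argument outlined in steps (3)--(4) of the scheme.
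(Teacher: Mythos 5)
Your proposal is correct and coincides with the paper's own proof, which likewise just applies the known values \eqref{eq:2.20}--\eqref{eq:2.24} together with the bounds \eqref{eq:2.25}--\eqref{eq:2.28} stratum by stratum. The only difference is presentational: the paper states this in one line, while you spell out the four cases explicitly.
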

\begin{proof}
Apply equations (\ref{eq:2.20})--(\ref{eq:2.24}) and (\ref{eq:2.25})--(\ref{eq:2.28}).\hfill$\square$\end{proof}

\begin{proposition}
\label{prop:Exp Nondeg}The restriction $\mathrm{Exp}:\widetilde{N}\rightarrow\widetilde{M}$
is non-degenerate.\end{proposition}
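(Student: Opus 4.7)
The plan is to reduce the claim to the classical sub-Riemannian identity: a point $(\lambda,t)$ is a degenerate point of $\mathrm{Exp}$ if and only if $t$ is a conjugate time along the geodesic with initial covector $\lambda$. Under this reduction, proving non-degeneracy of $\mathrm{Exp}|_{\widetilde{N}}$ amounts to verifying that no $(\lambda,t)\in\widetilde{N}$ satisfies $t=t_{k}^{\mathrm{conj}}(\lambda)$ for any $k\geq 1$.

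I would split the verification along the natural decomposition
\[
\widetilde{N}=\Bigl(\widetilde{N}\cap\bigcup_{i=1}^{3}N_{i}\Bigr)\sqcup N_{5}.
\]
On the three-dimensional piece $\widetilde{N}\cap\bigcup_{i=1}^{3}N_{i}$, the defining inequality gives $t<\mathbf{t}(\lambda)$. Combining (\ref{eq:ttC1})--(\ref{eq:ttC35}), which imply $\mathbf{t}(\lambda)=t_{1}^{\mathrm{Max}}(\lambda)$ on $C_{1}\cup C_{2}\cup C_{3}$, with Theorem \ref{thm:Max_Conj} (which gives $t_{1}^{\mathrm{conj}}(\lambda)\geq t_{1}^{\mathrm{Max}}(\lambda)$ on these strata), I obtain
\[
t<\mathbf{t}(\lambda)=t_{1}^{\mathrm{Max}}(\lambda)\leq t_{1}^{\mathrm{conj}}(\lambda).
\]
Hence $t$ is strictly smaller than the first conjugate time and in particular coincides with no conjugate instant, so $d\,\mathrm{Exp}_{(\lambda,t)}$ has full rank on this portion.

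For the remaining one-dimensional stratum $N_{5}$, the extremals are the straight lines $(0,0,\pm t)$ by (\ref{eq:2.17}); a direct inspection shows that $\mathrm{Exp}$ is a linear parametrization of $N_{5}$ onto the $z$-axis with the origin removed, which is non-degenerate in the appropriate sense (both $N_{5}$ and its image are one-dimensional). The main point requiring care is precisely the rigorous justification of the equivalence \emph{``degenerate point $\Leftrightarrow$ conjugate time''} together with the handling of the thin stratum $N_{5}$, where the rank of $d\,\mathrm{Exp}$ as a map into three-dimensional $M$ inevitably drops; both aspects are handled in the standard manner of the related works \cite{cut_sre1} and \cite{cut_engel}, by formulating non-degeneracy relative to the tangent space of the appropriate stratum and invoking the Jacobi equation along each extremal.
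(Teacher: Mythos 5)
Your central argument coincides with the paper's proof: by Theorem \ref{thm:Max_Conj} together with (\ref{eq:ttC1})--(\ref{eq:ttC35}), every $(\lambda,t)\in\widetilde{N}$ satisfies $t<\mathbf{t}(\lambda)\leq t_{1}^{\mathrm{conj}}(\lambda)$, and since critical points of $\mathrm{Exp}$ occur precisely at conjugate times, $\mathrm{Exp}$ is non-degenerate on $\widetilde{N}$. The one place where you go astray is the separate treatment of $N_{5}$. Non-degeneracy here (property \textbf{P3}) refers to the differential of $\mathrm{Exp}:N\rightarrow M$ on the full $3$-dimensional preimage $N=C\times\mathbb{R}_{+}$ evaluated at the point $(\lambda,t)$, not to the restriction of $\mathrm{Exp}$ to the $1$-dimensional stratum $N_{5}$, so there is no need to ``formulate non-degeneracy relative to the tangent space of the stratum''. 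The image of $\mathrm{Exp}\vert_{N_{5}}$ is indeed the punctured $z$-axis, but the rank of $d\,\mathrm{Exp}_{(\lambda,t)}$ does not drop at points of $N_{5}$: by (\ref{eq:2.28}) one has $t_{1}^{\mathrm{conj}}(\lambda)=+\infty$ for $\lambda\in C_{5}$, and by (\ref{eq:ttC35}) also $\mathbf{t}(\lambda)=+\infty$, so the same inequality $t<\mathbf{t}(\lambda)\leq t_{1}^{\mathrm{conj}}(\lambda)$ (Theorem \ref{thm:Max_Conj} is stated for all $\lambda\in\cup_{i=1}^{5}C_{i}$) covers $N_{5}$ uniformly with no special handling. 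This does not invalidate your conclusion, but the claimed inevitable rank drop is false and the extra machinery you invoke for it is unnecessary.
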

\begin{proof}
From Theorem \ref{thm:Max_Conj}, $t_{1}^{\mathrm{conj}}(\lambda)\geq t_{1}^{\mathrm{Max}}(\lambda)$.
Since for any $\nu=(\lambda,t)\in\widetilde{N}$ we have $t<\mathbf{t}(\lambda)$
and therefore exponential mapping is non-degenerate $\forall\nu=(\lambda,t)\in\widetilde{N}$
. \hfill$\square$
\end{proof}

Hence we proved properties \textbf{P1, P2 }and \textbf{P3} for the
exponential mapping $\mathrm{Exp}:D_{i}\rightarrow M_{i}$. It only
remains to prove condition \textbf{P4} now to establish that the exponential
mapping $\mathrm{Exp}:D_{i}\rightarrow M_{i}$ is indeed a diffeomorphism.

\subsection{Diffeomorphic Properties of the Exponential Mapping}

In this subsection we prove that the exponential mapping $\mathrm{Exp}:D_{i}\rightarrow M_{i},\quad i=1,2$,
is proper. First we recall an equivalent formulation of the properness
property.
\begin{definition}
Let $X$ be a topological space and $\{x_{n}\}\subset X$ a sequence.
We write $x_{n}\rightarrow\partial X$ if there is no compact $K\subset X$
such that $x_{n}\in K$ for any $n\in\mathbb{N}$. \end{definition}
\begin{remark}
Let $X,Y$ be topological spaces and $F:X\rightarrow Y$ a continuous
mapping. The mapping $F$ is proper iff for any sequence $\{x_{n}\}\subset X$
there holds the implication:
\[
x_{n}\rightarrow\partial X\implies F(x_{n})\rightarrow\partial Y.
\]

\end{remark}
Below we apply this properness test to the mapping $\mathrm{Exp}:D_{1}\rightarrow M_{1}$. 
\begin{lemma}
\label{lem:dM1}Let $\{q_{n}\}\subset M_{1}$. We have $q_{n}\rightarrow\partial M_{1}$
iff there is a subsequence $\{n_{k}\}$ on which one of the conditions
holds:\end{lemma}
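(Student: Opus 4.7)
The plan is to reduce the statement to the elementary topology of $M_1=\{(x,y,z)\in\mathbb{R}^3 : z>0\}$, which is an open subset of $\mathbb{R}^3$. A set $K\subset M_1$ is compact iff it is closed in $\mathbb{R}^3$, bounded, and bounded away from the plane $\{z=0\}$. Equivalently, the exhaustion of $M_1$ by compacts may be taken as
\[
K_m=\left\{q=(x,y,z)\in M_1 \;\Big|\; x^2+y^2+z^2\le m^{2},\; z\ge 1/m\right\},\qquad m\in\mathbb{N},
\]
with $K_m\subset\mathrm{int}\,K_{m+1}$ and $\bigcup_m K_m=M_1$. Thus $q_n\to\partial M_1$ holds iff for every $m$ there is $n$ with $q_n\notin K_m$, that is, iff for every $m$ there is $n$ with either $x_n^2+y_n^2+z_n^2>m^2$ or $z_n<1/m$.

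For the forward direction I would select, for each $m$, an index $n_m$ (increasing in $m$) for which $q_{n_m}\notin K_m$. Passing to a subsequence, one of the two defining inequalities of $K_m$ is violated for all $m$. In the first case, an additional Bolzano--Weierstrass extraction separates the three summands and produces a subsequence on which $|x_{n_k}|\to\infty$, or $|y_{n_k}|\to\infty$, or $z_{n_k}\to+\infty$. In the second case, since $z_n>0$ in $M_1$, one gets $z_{n_k}\to 0^{+}$. This yields, along a subsequence, precisely one of the listed alternatives.

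The reverse direction is immediate: if any of the listed conditions holds along a subsequence $\{n_k\}$, then for every compact $K\subset M_1$ there is $C>0$ and $c>0$ with $|q|\le C$ and $z\ge c$ for all $q\in K$, so $q_{n_k}\notin K$ eventually, and hence $q_n\to\partial M_1$.

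There is no real obstacle here; the only care needed is in matching the list of alternatives stated in the lemma with the two ways $q_n$ can leave every $K_m$ (namely, escaping to spatial infinity, which after subsequence extraction splits into the coordinatewise cases, and approaching the boundary plane $z=0$). The argument is purely topological and uses nothing from the sub-Riemannian structure beyond the explicit description of $M_1$ in \eqref{eq:M1}.
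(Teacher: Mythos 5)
Your proposal is correct and follows essentially the same route as the paper: both arguments rest on exhausting $M_{1}$ by compact sets of the form $\{\,q\in M_{1}\mid z\ \text{bounded away from }0,\ x,y,z\ \text{bounded}\,\}$, so that escaping every compact forces, after a subsequence extraction, one of the four listed alternatives. The paper states this in a single sentence; you merely spell out the standard Bolzano--Weierstrass details, which changes nothing of substance.
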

\begin{enumerate}
\item[(1)] $z\rightarrow0,$
\item[(2)] $z\rightarrow+\infty,$
\item[(3)] $x\rightarrow\infty,$
\item[(4)] $y\rightarrow\infty.$\end{enumerate}
\begin{proof}
Any compact set in $M_{1}$ is contained in a compact set $\left\{ q\in M_{1}\quad\mid\quad\varepsilon\leq z\leq\frac{1}{\varepsilon},\quad\left|x\right|\leq\frac{1}{\varepsilon},\quad\left|y\right|\leq\frac{1}{\varepsilon}\right\} $
for some $\varepsilon\in(0,1)$. \hfill$\square$\end{proof}

\begin{lemma}
\label{lem:dD1}Let $\{\nu_{n}\}\subset D_{1}$, then $\nu_{n}\rightarrow\partial D_{1}$
iff there is a subsequence $\{n_{k}\}$ on which one of the following
conditions hold:\end{lemma}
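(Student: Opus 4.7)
The plan is to mimic the straightforward characterization used in Lemma \ref{lem:dM1}: once $D_1$ is written as an open subset of $N=C\times\mathbb{R}_+$ with an explicit list of defining open inequalities, the compacts of $D_1$ are exactly those on which each of those inequalities is strict and each coordinate is bounded. So $\nu_n\to\partial D_1$ happens iff some defining inequality is being saturated along a subsequence, or some coordinate is escaping to $\infty$.

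First I would fix the decomposition $D_1=(D_1\cap N_1)\sqcup(D_1\cap N_2)\sqcup(D_1\cap N_3)\sqcup(D_1\cap N_5)$ inherited from \eqref{eq:D1decomp}, and pass to a subsequence lying entirely in one of these four pieces (possible since these are the components of $D_1$ within each energy stratum). In each piece, the defining conditions are explicit in the coordinates $(k,p,\tau)$ given in Table \ref{tab:2}: $p\in(0,2K(k))$ on $C_1$, $p\in(0,2K(k))$ on $C_2$, and $p\in(0,+\infty)$ on $C_3$, together with the sign condition $\sin(\gamma_{t/2}/2)>0$, which in elliptic coordinates amounts to $\mathrm{sn}\,\tau$ lying strictly inside its sign-fixed interval. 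Thus staying inside a compact $K\subset D_1\cap N_i$ is equivalent to $p$ being bounded away from $0$ and from its upper endpoint (i.e.\ $t$ bounded away from $0$ and $\mathbf{t}(\lambda)$), $\tau$ bounded away from the zeros of $\mathrm{sn}\,\tau$ (i.e.\ $\sin(\gamma_{t/2}/2)$ bounded away from $0$), and finally $k$ bounded away from $1$ and from the respective boundary of its interval, and in $C_3$ also $\varphi$ bounded.

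From this, one direction is immediate: if $\nu_n$ stays in a compact $K$, none of the listed degenerations can occur. Conversely, assuming no subsequence satisfies any of the boundary conditions, I would construct a compact $K$ explicitly as the preimage of a box $\{\varepsilon\le p\le \mathbf{t}(\lambda)/2-\varepsilon,\;\varepsilon\le|\mathrm{sn}\,\tau|\le 1-\varepsilon,\;k\le 1-\varepsilon\}$ (and analogues on the other strata, with $|\varphi|\le 1/\varepsilon$ on $C_3$); continuity of $\mathbf{t}$ (Proposition \ref{prop:tt-reg}) and of the elliptic coordinates makes this set closed and bounded in $D_1$, hence compact.

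The main obstacle is the gluing between the strata: $D_1$ is connected (Proposition \ref{prop:Di-topol}) although Table \ref{tab:2} presents it piecewise, so a priori a sequence could hop among $C_1$, $C_2$, $C_3$ without any single coordinate condition being saturated. This is handled by observing that passage between these strata corresponds exactly to $E(\lambda_n)\to 1$, equivalently $k\to 1$ (in $C_1$), $k\to 1$ (in $C_2$), or $|\varphi|\to\infty$ (in $C_3$); any such limit is one of the enumerated boundary behaviors, so the hop is itself a boundary approach. The only other delicate point is the endpoint $t=\mathbf{t}(\lambda)$ on $C_1\cup C_2$, which I handle by using continuity of $\mathbf{t}$ at the separatrix and the limits $\lim_{E\to 1}\mathbf{t}(E)=+\infty$ from Proposition \ref{prop:tt-reg} to keep these two alternatives from merging pathologically. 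Once these are in place, the \emph{iff} reduces to bookkeeping of the open inequalities defining $D_1$.
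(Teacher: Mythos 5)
There is a genuine gap in your gluing step, and the patch you propose asserts the opposite of what is true. You reduce to subsequences lying in a single stratum $D_1\cap N_i$ and characterize compactness stratum by stratum in the elliptic coordinates $(k,p,\tau)$, which forces you to put ``$k$ bounded away from $1$'' among the defining conditions of your compact boxes. But the separatrix is \emph{not} part of $\partial D_1$: by the decomposition (\ref{eq:D1decomp}), $D_1$ contains the points over $C_3$ and over $C_5\cap\{\gamma=\pi\}$, so a sequence $\nu_n\in D_1\cap N_1$ with $k_n\rightarrow 1$ may converge to an \emph{interior} point of $D_1$ lying over $C_3\cup C_5$; it then stays in a compact subset of $D_1$ even though it leaves every compact subset of $D_1\cap N_1$. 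Your claim that passage between the strata corresponds to $E\rightarrow 1$ and that ``any such limit is one of the enumerated boundary behaviors'' fails on both counts: $E\rightarrow 1$ (equivalently $k\rightarrow1$) is not among conditions (1)--(6) and does not imply any of them --- take $(\gamma_{t/2},c_{t/2})\rightarrow(2\pi/3,1)$, a point of the separatrix with $\gamma_{t/2}$ bounded away from $0$ and $2\pi$, $c_{t/2}$ bounded, $t$ in a compact interval of $(0,+\infty)$, and $\mathbf{t}(\lambda)=4K(k)\rightarrow+\infty$ so that $\mathbf{t}(\lambda)-t\not\rightarrow0$ --- and such a sequence need not approach $\partial D_1$ at all. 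As written, your ``only if'' direction therefore proves a different, incorrect characterization. A smaller slip of the same kind: requiring $\varepsilon\le|\mathrm{sn}\,\tau|\le1-\varepsilon$ excludes the interior points with $\tau=K$, so your boxes do not even exhaust $D_1\cap N_1$.

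The paper sidesteps all of this by not using elliptic coordinates here. Since $(\lambda,t)\mapsto(\gamma_{t/2},c_{t/2},t)$ is a global diffeomorphism of $N$ (the flow $e^{P}$ already used in Proposition \ref{prop:Di-topol}), $D_1$ is carried to $\left\{\gamma_{t/2}\in(0,2\pi),\ c_{t/2}\in\mathbb{R},\ 0<t<\mathbf{t}(\lambda)\right\}$, and by continuity of $\mathbf{t}$ (Proposition \ref{prop:tt-reg}) the sets $\left\{\gamma_{t/2}\in[\varepsilon,2\pi-\varepsilon],\ |c_{t/2}|\le\frac{1}{\varepsilon},\ t\in[\varepsilon,\frac{1}{\varepsilon}],\ \mathbf{t}(\lambda)-t\ge\varepsilon\right\}$ form an exhaustion of $D_1$ by compacts; conditions (1)--(6) are exactly the ways of violating these constraints, and the separatrix is harmless because $\mathbf{t}=+\infty$ there. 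If you insist on the stratified route, the gluing step must be inverted: you have to show that when none of (1)--(6) holds along any subsequence, a subsequence with $k\rightarrow1$ converges to a point of $D_1\cap(N_3\cup N_5)$ and hence does \emph{not} witness $\nu_n\rightarrow\partial D_1$.
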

\begin{enumerate}
\item[(1)] $\gamma_{t/2}\rightarrow0,$
\item[(2)] $\gamma_{t/2}\rightarrow2\pi,$
\item[(3)] $c_{t/2}\rightarrow\infty,$
\item[(4)] $t\rightarrow0,$
\item[(5)] $t\rightarrow+\infty,$
\item[(6)] $\mathbf{t}(\lambda)-t\rightarrow0.$\end{enumerate}
\begin{proof}
Any compact set in $D_{1}$ is contained in a compact set 
\[
\left\{ \nu\in N\,\mid\,\gamma_{t/2}\in\left[\varepsilon,2\pi-\varepsilon\right],\left|c_{t/2}\right|\leq\frac{1}{\varepsilon},\, t\in[\varepsilon,\frac{1}{\varepsilon}],\,\mathbf{t}(\lambda)-t\geq\varepsilon\right\} ,
\]
for some $\varepsilon\in(0,1)$.\hfill$\square$ \end{proof}

\begin{proposition}
\label{prop:Proper}The mapping $\mathrm{Exp}:D_{i}\rightarrow M_{i},\quad i=1,2$,
is proper.\end{proposition}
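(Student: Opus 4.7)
The plan is to reduce the proof to the single case $i=1$ by invoking symmetry, then apply the sequential characterization of properness (the remark preceding Lemma~\ref{lem:dM1}) combined with the boundary descriptions in Lemmas~\ref{lem:dD1} and \ref{lem:dM1}. Concretely: since $\varepsilon^{2}:N\to N$ is a diffeomorphism with $\varepsilon^{2}(D_{1})=D_{2}$ (Proposition~\ref{prop:Ref_Di}) and $\varepsilon^{2}(M_{1})=M_{2}$ (Proposition~\ref{prop:Ref_Mi}), and since $\mathrm{Exp}$ intertwines these actions by the definition of $\varepsilon^{j}$ on $N$, it suffices to prove properness of $\mathrm{Exp}:D_{1}\to M_{1}$. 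Take any sequence $\{\nu_{n}\}\subset D_{1}$ with $\nu_{n}\to\partial D_{1}$; by Lemma~\ref{lem:dD1}, on a subsequence one of the six conditions (1)--(6) holds. I must show that along a further subsequence $q_{n}=\mathrm{Exp}(\nu_{n})$ satisfies one of the four conditions of Lemma~\ref{lem:dM1}.

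I would handle the cases separately using the explicit parametrizations \eqref{eq:2.13}--\eqref{eq:2.17} together with the sinh-formula \eqref{eq:3.10}. Cases (1) and (2), $\gamma_{t/2}\to 0$ or $2\pi$, give $\sin(\gamma_{t/2}/2)\to 0$, which in elliptic coordinates translates to $\mathrm{sn}\,\tau\to 0$ on $N_{1}\cup N_{2}\cup N_{3}$; plugging into \eqref{eq:3.10} yields $\sinh z_{t}\to 0$, hence $z\to 0$, so $q_{n}\to\partial M_{1}$. Case (4), $t\to 0$, forces $q_{n}\to q_{0}=0$ by continuity of $\mathrm{Exp}$, and in particular $z\to 0$. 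Case (6), $\mathbf{t}(\lambda_{n})-t_{n}\to 0$, is handled using the fact established in Proposition~3.7 of \cite{Max_Conj_SH2} that the first Maxwell points lie on $\{z=0\}$: either the limit point lies in a compact subset of $C_{1}\cup C_{2}$ where $\mathbf{t}$ is smooth, so $z_{t_{n}}\to 0$ by continuity of $\mathrm{Exp}$ and $\mathbf{t}$; or $\lambda_{n}$ itself escapes, reducing to cases (1)--(3).

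The main obstacles are cases (3) and (5). For case (3), $c_{t/2}\to\infty$: conservation of energy along the vertical flow gives $E(\lambda_{n})=c_{t/2,n}^{2}/2-\cos\gamma_{t/2,n}\to+\infty$, so eventually $\lambda_{n}\in C_{2}$ with $k_{n}=\sqrt{2/(E+1)}\to 0$. By Proposition~\ref{prop:tt-reg}(2), $\mathbf{t}(E)\to 0$ in this limit, hence $t_{n}\to 0$ (reducing to case (4)); otherwise direct asymptotic analysis of \eqref{eq:2.14} as $k\to 0$ with $\psi$ bounded away from degeneracies shows that $x_{t}$ or $y_{t}$ becomes unbounded because the prefactors $\tfrac{1}{w(1-k^{2})}$ and the terms $\mathrm{E}(\psi_{t})-\mathrm{E}(\psi)-k'^{2}(\psi_{t}-\psi)$ blow up at the required rate. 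For case (5), $t\to+\infty$: the inequality $t_{n}<\mathbf{t}(\lambda_{n})$ and the explicit expressions \eqref{eq:ttC1}--\eqref{eq:ttC35} force $\lambda_{n}$ to approach $C_{3}\cup C_{5}$ (i.e.\ $k_{n}\to 1$ in the $C_{1},C_{2}$ cases, or straight into $C_{3}\cup C_{5}$); asymptotic analysis of \eqref{eq:2.13}, \eqref{eq:2.14} as $k\to 1$, or of the formulas \eqref{eq:2.15}, \eqref{eq:2.17} directly on $C_{3}$ and $C_{5}$, produces linear-in-$t$ (or hyperbolically growing) behavior of $x_{t}$ or $y_{t}$, hence $|x|\to\infty$ or $|y|\to\infty$.

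The main technical difficulty is uniformly controlling the behavior of the Jacobi-elliptic formulas as one approaches the separatrix $k\to 1$ (equivalently $E\to 1$) or as $c_{t/2}\to\infty$, because the parameters $\tau,p,w$ can degenerate simultaneously and one must exclude cancellations before claiming unbounded growth of $x$ or $y$. This is precisely the pattern carried out in \cite{cut_sre1} and \cite{cut_engel}; the argument here is structurally identical, with the roles of trigonometric and hyperbolic functions swapped due to the pseudo-Euclidean structure of $\mathrm{SH}(2)$. After these six cases are exhausted, every subsequence of $\{\mathrm{Exp}(\nu_{n})\}$ has a further subsequence tending to $\partial M_{1}$, which is exactly the properness criterion.
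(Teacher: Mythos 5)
Your overall architecture matches the paper's: reduce to $\mathrm{Exp}:D_{1}\rightarrow M_{1}$ by the reflection symmetry, use the sequential criterion for properness, and run through the six boundary conditions of Lemma~\ref{lem:dD1}, showing each forces one of the four conditions of Lemma~\ref{lem:dM1}. Your treatment of cases (1), (2), (4) and (6) is essentially the paper's, and your case (3) (pass to the energy $E\rightarrow+\infty$, hence $\mathbf{t}(E)\rightarrow0$, hence $t\rightarrow0$) is a clean, arguably tidier version of what the paper does (on $N_{1}$ the paper simply notes the case is impossible since $|c_{t/2}|\leq 2k<2$ there, deferring the $C_{2}$ situation to the ``considered similarly'' cases).

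The genuine gap is case (5), $t\rightarrow+\infty$, which is the analytic core of the proposition and occupies most of the paper's proof. You assert that the formulas ``produce linear-in-$t$ (or hyperbolically growing) behavior of $x_{t}$ or $y_{t}$'' and you explicitly flag that one ``must exclude cancellations before claiming unbounded growth'' --- but you do not exclude them, and the assertion itself is not correct in all subcases. Writing $u=\mathrm{am}(p)$ with $k\rightarrow1$, the paper's subcase analysis shows that when $\mathrm{sn}\,p\rightarrow1$ and $\mathrm{sn}\,\tau\rightarrow1$ one gets $\Delta\rightarrow0$ and it is $z_{t}$ that tends to $\infty$ (while $x,y$ need not blow up), and when $\bar{u}=\pi$ one gets $z_{t}\rightarrow0$; only in the remaining subcases does one show $x^{2}+y^{2}\rightarrow\infty$, and this is done not via growth in $t$ but via the invariant $R_{2}(q_{t})=\frac{2s_{1}}{1-k^{2}}f_{2}(p)\,\mathrm{dn}\,\tau$ with $f_{2}(p)=\mathrm{dn}\,p\,\mathrm{E}(p)-k^{2}\mathrm{sn}\,p\,\mathrm{cn}\,p$. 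Here the two terms of $f_{2}$ are both $O(k^{\prime})$ and nearly cancel, so one needs the quantitative lower bound $E(u,k)-\sin u>\frac{\pi}{8}k^{\prime2}(1+o(1))$ together with a case split on $\alpha=\lim\cos u/k^{\prime}\in[0,+\infty]$ to conclude $f_{2}(p)/(1-k^{2})\rightarrow+\infty$. Without this estimate (or an equivalent one), the claim that some coordinate escapes every compact set in $M_{1}$ is unsupported precisely at the point where it could fail, so the properness proof is incomplete as written.
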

\begin{proof}
In view of the reflections $\varepsilon^{j}$, it suffices to consider
the case $\mathrm{Exp}:D_{1}\rightarrow M_{1}$. Let $\left\{ \nu_{n}\right\} \subset D_{1}$,
$\nu_{n}\rightarrow\partial D_{1}$, we have to show that $q_{n}=\mathrm{Exp}(\nu_{n})\rightarrow\partial M_{1}$.
Taking into account decomposition (\ref{eq:D1decomp}), we can consider
the cases $\left\{ \nu_{n}\right\} \subset D_{1}\cap N_{j},\quad j=1,2,3,5$.

Let $\left\{ \nu_{n}\right\} \subset D_{1}\cap N_{1}$, $\nu_{n}\rightarrow\partial D_{1}$.
We will need the following formulas for the extremals $\lambda_{t}=e^{t\overrightarrow{H}}(\lambda),\quad\lambda\in C_{1}$,
obtained in \cite{Extremal_Pseudo_Euclid} and \cite{Max_Conj_SH2}:
\begin{eqnarray*}
\sin\frac{\gamma_{t}}{2} & = & s_{1}k\,\mathrm{sn}(\varphi_{t}),\\
\frac{c_{t}}{2} & = & k\,\mathrm{cn}(\varphi_{t}),\\
\sinh z_{t} & = & s_{1}\frac{k\,\mathrm{sn}p\,\mathrm{sn}\tau}{\Delta},\quad\Delta=1-k^{2}\mathrm{sn}^{2}p\,\mathrm{sn}^{2}\tau,\\
R_{2}(q_{t}) & = & f_{2}(p)\frac{2s_{1}}{1-k^{2}}\mathrm{dn}\tau,\quad f_{2}(p)=\mathrm{dn}p\,\mathrm{E}(p)-k^{2}\mathrm{sn}p\,\mathrm{cn}p.
\end{eqnarray*}
Notice that $p=\frac{t}{2},\quad\tau=\varphi+\frac{t}{2}$, and consider
all the cases (1)--(6) of Lemma \ref{lem:dD1}.\end{proof}

\begin{enumerate}
\item[(1)] If $\gamma_{t/2}\rightarrow0$, then $\sin\frac{\gamma_{t/2}}{2}=s_{1}k\,\mathrm{sn}\tau\rightarrow0$,
thus $\sinh z_{t}\rightarrow0$, so $z_{t}\rightarrow0$, hence $q_{n}\rightarrow\partial M_{1}$(Lemma
\ref{lem:dM1}, (1)).
\item[(2)] If $\gamma_{t/2}\rightarrow2\pi$, then $\sin\frac{\gamma_{t/2}}{2}=s_{1}k\,\mathrm{sn}\tau\rightarrow0$,
thus $\sinh z_{t}\rightarrow0$, so $z_{t}\rightarrow0$, hence $q_{n}\rightarrow\partial M_{1}$. 
\item[(3)] The case $c_{t/2}\rightarrow\infty$ is impossible.
\item[(4)] If $t\rightarrow0$, then $p\rightarrow0$, thus $z_{t}\rightarrow0$.
\item[(5)] Let $t\rightarrow+\infty$, then $p\rightarrow+\infty$. Since $p\in(0,2K(k))$
then $k\rightarrow1$. Denote $u=\mathrm{am}(p)\in(0,\pi)$. On a
subsequence we have $u\rightarrow\bar{u}\in[0,\pi]$ and we will suppose
so in the sequel.

\begin{enumerate}
\item If $\bar{u}\in[0,\pi)$, then $p=F(u,k)\rightarrow F(\bar{u},1)=\intop_{0}^{\bar{u}}\frac{dt}{\cos(t)}<+\infty$,
a contradiction. 
\item Let $\bar{u}=\frac{\pi}{2}$, thus $\mathrm{sn}p=\sin u\rightarrow1$,
$\mathrm{cn}p=\cos(u)\rightarrow c$.

\begin{enumerate}
\item If $\mathrm{sn}\tau\rightarrow1$, then $\Delta\rightarrow0$, thus
$z_{t}\rightarrow\infty$. 
\item Let $\mathrm{sn}\tau\rightarrow\bar{s}\neq1$, then $\mathrm{dn}\tau\rightarrow\sqrt{1-\bar{s}^{2}}\neq0$.
Denote 
\begin{eqnarray*}
g_{2}(u) & = & f_{2}(F(u,k))=\sqrt{1-k^{2}\sin^{2}u}\mathrm{E}(u,k)-k^{2}\sin(u)\cos(u).
\end{eqnarray*}
We prove now that $\frac{g_{2}(u)}{1-k^{2}}\rightarrow+\infty$, then
$\frac{f_{2}(u)}{1-k^{2}}\rightarrow+\infty$, thus $R_{2}(q_{t})\rightarrow\infty$,
so $x_{t}^{2}+y_{t}^{2}+z_{t}^{2}\rightarrow\infty$, whence $q_{t}\rightarrow\partial M_{1}$.
Denote $k^{\prime}=\sqrt{1-k^{2}}\rightarrow0$. We can suppose that
on a subsequence $\frac{\cos u}{k^{\prime}}\rightarrow\alpha\in[0,+\infty]$.
We have 
\begin{eqnarray*}
k^{2}\sin(u)\cos(u) & = & \sin(u)\cos(u)+o(k^{\prime2}),\\
\sqrt{1-k^{2}\sin^{2}u} & = & \sqrt{\cos^{2}u+k^{\prime2}-k^{\prime2}\cos^{2}u}.
\end{eqnarray*}
Now we estimate $E(u,k)$ from below:
\begin{eqnarray*}
E(u,k)-\sin(u) & = & \intop_{0}^{u}\sqrt{1-k^{2}\sin^{2}t}dt-\intop_{0}^{u}\cos(t)dt=\intop_{0}^{u}\frac{1-k^{2}\sin^{2}t-\cos^{2}t}{\sqrt{1-k^{2}\sin^{2}t}+\cos t}dt\\
 & > & \frac{1-k^{2}}{2}\intop_{0}^{u}\sin^{2}t\, dt\\
 & = & \frac{1-k^{2}}{4}\left(u-\frac{\sin(2u)}{2}\right)\\
 & = & \frac{\pi}{8}k^{\prime2}(1+o(1)).
\end{eqnarray*}

Thus,
\[
E(u,k)>\sin(u)+\frac{\pi}{8}k^{\prime2}(1+o(1)).
\]

\begin{enumerate}
\item Let $\alpha\in[0,+\infty).$ Then $\cos(u)=\alpha k^{\prime}+o(k^{\prime}),\quad\sin(u)=1+o(1),$
thus
\begin{eqnarray*}
k^{2}\sin(u)\cos(u) & = & \alpha k^{\prime}+o(k^{\prime}),\\
\sqrt{1-k^{2}\sin^{2}(u)} & = & \sqrt{1+\alpha^{2}}k^{\prime}+o(k^{\prime}),\\
E(u,k) & = & 1+o(1),\\
\sqrt{1-k^{2}\sin^{2}u}\, E(u,k) & = & \sqrt{1+\alpha^{2}}k^{\prime}+o(k^{\prime}),\\
g_{2}(u) & = & \left(\sqrt{1+\alpha^{2}}-\alpha\right)k^{\prime}+o(k^{\prime}),\\
\frac{g_{2}(u)}{k^{\prime2}} & = & \frac{\left(\sqrt{1+\alpha^{2}}-\alpha\right)}{k^{\prime}}(1+o(1))\rightarrow\infty,
\end{eqnarray*}
and the claim follows. 
\item Let $\alpha=+\infty$, thus $k^{\prime}=o(\cos(u))$. Then 
\begin{eqnarray*}
k^{2}\sin(u)\cos(u) & = & \sin(u)\cos(u)-k^{\prime2}\cos(u)+o\left(k^{\prime2}\cos(u)\right),\\
\sqrt{1-k^{2}\sin^{2}u} & = & \cos(u)\sqrt{1+\frac{k^{\prime2}}{\cos^{2}u}+o\left(\frac{k^{\prime2}}{\cos^{2}u}\right)}\\
 & = & \cos(u)+\frac{1}{2}\frac{k^{\prime2}}{\cos(u)}+o\left(\frac{k^{\prime2}}{\cos(u)}\right),\\
\sqrt{1-k^{2}\sin^{2}u}\, E(u,k) & > & \cos(u)\sin(u)+\frac{1}{2}\frac{k^{\prime2}}{\cos(u)}+o\left(\frac{k^{\prime2}}{\cos(u)}\right),\\
g_{2}(u) & > & \frac{1}{2}\frac{k^{\prime2}}{C}(1+o\left(1\right)),\\
\frac{g_{2}(u)}{k^{\prime2}} & > & \frac{1}{2C}(1+o\left(1\right))\rightarrow+\infty,
\end{eqnarray*}
and the claim follows.
\end{enumerate}
\item Let $u\in(0,\pi)$, then $f_{2}(p)=g_{2}(u)\rightarrow\left|\cos\bar{u}\right|\left(E(\bar{u},1)+\sin\bar{u}\right)>0,$
thus 
\[
\frac{f_{2}(p)}{\sqrt{1-k^{2}}}\rightarrow+\infty.
\]
Since $\frac{\mathrm{dn}\tau}{\sqrt{1-k^{2}}}\geq1$, then $R_{2}(q_{t})\rightarrow\infty$,
so $x_{t}^{2}+y_{t}^{2}+z_{t}^{2}\rightarrow\infty$, whence $q_{t}\rightarrow\partial M_{1}$.
\item If $\bar{u}=\pi$, then $\mathrm{sn}p=\sin(u)\rightarrow0$, thus
$z_{t}\rightarrow0$.
\end{enumerate}
\end{enumerate}

\item[(6)] Let $\mathbf{t}(\lambda)-t\rightarrow0$. Recall that $\mathbf{t}(\lambda)=4K(k)$
for $\lambda\in C_{1}$, thus $4K(k)-t\rightarrow0$. Since $k\in(0,1)$,
then there is a subsequence $\{n_{m}\}$ on which $k\rightarrow\bar{k}\in[0,1]$.
If $\bar{k}\in[0,1)$, then $K(k)\rightarrow K(\bar{k})<+\infty$, thus $t\rightarrow4K(\bar{k})$, so $p=2K(\bar{k})$. Consequently, $\sinh z_{t}\rightarrow0$, whence $q_{n}\rightarrow\partial M_{1}$ (Lemma \ref{lem:dM1}, (1)). If $\bar{k}=1$, then $K(k)\rightarrow+\infty$,
thus $t\rightarrow+\infty$, $q_{n}\rightarrow\partial M_{1}$ by item (5).
\end{enumerate}

Consequently, in each of the cases (1)--(6) of Lemma \ref{lem:dD1}
we get $q_{n}\rightarrow\partial M_{1}$ for a sequence $\{\nu_{n}\}\subset D_{1}\cap N_{1},\quad\nu_{n}\rightarrow\partial D_{1}$.
All the rest cases $\{\nu_{n}\}\subset D_{1}\cap N_{j},\quad j=2,3,5,$
are considered similarly. 

Summing up, for any sequence $\{\nu_{n}\}\subset D_{1}$ with $\nu_{n}\rightarrow\partial D_{1}$
we have $\mathrm{Exp}(\nu_{n})\rightarrow\partial M_{1}$. Thus the
mapping $\mathrm{Exp}:D_{1}\rightarrow M_{1}$ is proper. \hfill$\square$ 

Now we get the main result of this section.
\begin{theorem}
\label{thm:Exp Di Diffeo}The mapping $\mathrm{Exp}:D_{i}\rightarrow M_{i},\quad i=1,2$,
is a diffeomorphism.\end{theorem}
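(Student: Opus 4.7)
The plan is to verify the four hypotheses \textbf{P1}--\textbf{P4} of Hadamard's global diffeomorphism theorem (stated just before the decomposition subsections) for each of the mappings $\mathrm{Exp}:D_i \rightarrow M_i$, $i=1,2$. Both $D_i$ and $M_i$ are open subsets of smooth $3$-dimensional manifolds (the preimage $N = C \times \mathbb{R}_+$ and the image $M = \mathrm{SH}(2) \cong \mathbb{R}^3$, respectively), so the equal-dimension requirement of Hadamard's theorem holds automatically. Since all of \textbf{P1}--\textbf{P4} have already been established in separate propositions above, the proof reduces to a short citation chain.

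Concretely, I would argue as follows. For \textbf{P1}, invoke Proposition \ref{prop:Di-topol}: $D_i$ is open and connected. For \textbf{P2}, invoke Proposition \ref{prop:Mi-topol}: $M_i$ is an open half-space of $M \cong \mathbb{R}^3$, hence connected and simply connected. For \textbf{P3}, note that $D_i \subset \widetilde{N}$, so any $\nu = (\lambda,t) \in D_i$ satisfies $t < \mathbf{t}(\lambda) \leq t_1^{\mathrm{conj}}(\lambda)$ by Theorem \ref{thm:Max_Conj}; hence $\mathrm{Exp}$ is non-degenerate on $D_i$ by Proposition \ref{prop:Exp Nondeg}. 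For \textbf{P4}, invoke Proposition \ref{prop:Proper}, which asserts exactly that $\mathrm{Exp}:D_i \rightarrow M_i$ is proper.

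With all four conditions in place, Hadamard's theorem yields immediately that $\mathrm{Exp}:D_i \rightarrow M_i$ is a diffeomorphism, as claimed. There is no substantive obstacle at this stage; the real work has already been absorbed into the preceding propositions, most notably the delicate case analysis behind Proposition \ref{prop:Proper} (properness) and the derivation of the sharp relation $t_1^{\mathrm{conj}} \geq t_1^{\mathrm{Max}}$ used in non-degeneracy. Thus the body of the proof is essentially one sentence per hypothesis, followed by a single invocation of Hadamard's global diffeomorphism theorem.
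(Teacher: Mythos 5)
Your proposal is correct and follows essentially the same route as the paper: the paper's own proof is exactly a checklist verifying \textbf{P1} via Proposition \ref{prop:Di-topol}, \textbf{P2} via Proposition \ref{prop:Mi-topol}, \textbf{P3} via Proposition \ref{prop:Exp Nondeg}, and \textbf{P4} via Proposition \ref{prop:Proper}, followed by the invocation of Hadamard's theorem (with the $i=2$ case obtained from $i=1$ by the reflections).
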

\begin{proof}
All of the conditions \textbf{P1--P4 }are satisfied for the mapping
$\mathrm{Exp}:D_{1}\rightarrow M_{1}$:\end{proof}
\begin{itemize}
\item $D_{1}\subset N$ and $M_{1}\subset M$ are open subsets thus 3-dimensional
manifolds (Proposition \ref{prop:Di-topol}, Proposition \ref{prop:Mi-topol}),
\item \textbf{P1 - }$D_{1}$ is connected (Proposition \ref{prop:Di-topol}),
\item \textbf{P2 - $M_{1}$ }is connected and simply connected\textbf{ }(Proposition
\ref{prop:Mi-topol}),
\item \textbf{P3 - }$\left.\mathrm{Exp}\right|_{D_{1}}$ is non-degenerate
(Proposition \ref{prop:Exp Nondeg}),
\item \textbf{P4 - }$\mathrm{Exp}:D_{1}\rightarrow M_{1}$ is proper (Proposition
\ref{prop:Proper}).
\end{itemize}
Thus $\mathrm{Exp}:D_{1}\rightarrow M_{1}$ is a diffeomorphism. By
virtue of the reflections, $\mathrm{Exp}:D_{2}\rightarrow M_{2}$
is a diffeomorphism as well.

\hfill$\square$ 
\begin{corollary}
\label{cor:Diffeo}The exponential mapping $\mathrm{Exp}:\widetilde{N}\rightarrow\widetilde{M}$
is a diffeomorphism.\end{corollary}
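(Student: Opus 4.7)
The plan is to deduce the corollary directly from Theorem \ref{thm:Exp Di Diffeo}, using the disjoint decompositions $\widetilde{N}=D_{1}\sqcup D_{2}$ and $\widetilde{M}=M_{1}\sqcup M_{2}$ established earlier. The only work beyond invoking the two component-wise diffeomorphisms is to check that one can reassemble them into a global diffeomorphism of the disjoint unions; no further analytic estimate is required.

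First I would record that $D_{1},D_{2}$ are open in $\widetilde{N}$ (Proposition \ref{prop:Di-topol}) and $M_{1},M_{2}$ are open in $\widetilde{M}$ (Proposition \ref{prop:Mi-topol}), so each of the two decompositions is a partition of the ambient space into open pieces. Next, by Proposition \ref{prop:Exp Di Mi}(1) we have $\mathrm{Exp}(D_{i})\subset M_{i}$ for $i=1,2$, so the map $\mathrm{Exp}:\widetilde{N}\to\widetilde{M}$ restricts to the two maps $\mathrm{Exp}|_{D_{i}}:D_{i}\to M_{i}$, which are diffeomorphisms by Theorem \ref{thm:Exp Di Diffeo}.

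Finally I would define a global inverse $F:\widetilde{M}\to\widetilde{N}$ piecewise by setting $F|_{M_{i}}=(\mathrm{Exp}|_{D_{i}})^{-1}$. Since $M_{1}$ and $M_{2}$ are disjoint open subsets whose union is $\widetilde{M}$, the piecewise formula gives a well-defined smooth map, and the relations $\mathrm{Exp}\circ F=\mathrm{Id}_{\widetilde{M}}$ and $F\circ\mathrm{Exp}=\mathrm{Id}_{\widetilde{N}}$ hold on each stratum and hence globally. Thus $\mathrm{Exp}:\widetilde{N}\to\widetilde{M}$ is a diffeomorphism. There is no genuine obstacle here; the corollary is a packaging of Theorem \ref{thm:Exp Di Diffeo} via the $\varepsilon^{i}$-symmetric decomposition of preimage and image into matched $z>0$ and $z<0$ halves.
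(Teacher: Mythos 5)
Your proposal is correct and is essentially the paper's argument: the paper's proof of Corollary \ref{cor:Diffeo} is simply ``Follows from Theorem \ref{thm:Exp Di Diffeo},'' and you have merely made explicit the routine gluing step (disjoint open decompositions $\widetilde{N}=D_{1}\sqcup D_{2}$, $\widetilde{M}=M_{1}\sqcup M_{2}$, matched by $\mathrm{Exp}$ via Proposition \ref{prop:Exp Di Mi}, with a piecewise-defined smooth inverse) that the paper leaves implicit. No gap.
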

\begin{proof}
Follows from Theorem \ref{thm:Exp Di Diffeo}.\hfill$\square$ 
\end{proof}

\subsection{Cut Time}

Now we can prove that inequality (\ref{eq:tCutbound}) is in fact
an equality for $\lambda\in C\backslash C_{4}$. 
\begin{theorem}
\label{thm:Cut_time}If $\lambda\in C\backslash C_{4}$, then $t_{\mathrm{cut}}(\lambda)=\mathbf{t}(\lambda)$. \end{theorem}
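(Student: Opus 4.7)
The upper bound $t_{\mathrm{cut}}(\lambda) \leq \mathbf{t}(\lambda)$ is already in hand from (\ref{eq:tCutbound}), so the plan is to establish the reverse inequality $t_{\mathrm{cut}}(\lambda) \geq \mathbf{t}(\lambda)$ for $\lambda \in C \setminus C_4$ by showing that for every $t \in (0, \mathbf{t}(\lambda))$ the geodesic $\mathrm{Exp}(\lambda, \cdot)|_{[0,t]}$ is globally minimizing. Since the upper bound is already proved, this lower bound closes the theorem.

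The first and main step is the diffeomorphism argument on the \emph{regular} stratum. Fix $t \in (0, \mathbf{t}(\lambda))$ such that $(\lambda, t) \in \widetilde{N} = D_1 \sqcup D_2$; for $\lambda \in C_1 \cup C_2 \cup C_3$ this is the case whenever $\sin(\gamma_{t/2}/2) \neq 0$, while for $\lambda \in C_5$ it holds for every $t > 0$. Set $q_t = \mathrm{Exp}(\lambda, t)$, which lies in $\widetilde{M}$ by Proposition \ref{prop:Exp Di Mi}(2). By Filippov's theorem there is a minimizer with endpoint $q_t$, say $\mathrm{Exp}(\bar{\lambda}, \bar{t}) = q_t$ with $\bar{t} = d(q_0, q_t) \leq t$. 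Since this geodesic is optimal, the already-established upper bound gives $\bar{t} \leq t_{\mathrm{cut}}(\bar{\lambda}) \leq \mathbf{t}(\bar{\lambda})$, so $(\bar{\lambda}, \bar{t}) \in \widehat{N}$. Because $q_t \in \widetilde{M}$ and $\mathrm{Exp}(N') \subset M'$ (Proposition \ref{prop:Exp Di Mi}(3)) with $\widetilde{M} \cap M' = \emptyset$, we necessarily have $(\bar{\lambda}, \bar{t}) \in \widetilde{N}$. The diffeomorphism $\mathrm{Exp}: \widetilde{N} \to \widetilde{M}$ from Corollary \ref{cor:Diffeo} then forces $(\bar{\lambda}, \bar{t}) = (\lambda, t)$, and in particular $\bar{t} = t$, which is exactly the statement that $\mathrm{Exp}(\lambda, \cdot)|_{[0,t]}$ is optimal.

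The second step handles the exceptional values of $t$ for which $(\lambda, t) \notin \widetilde{N}$, namely those $\lambda \in C_1 \cup C_2 \cup C_3$ and $t < \mathbf{t}(\lambda)$ with $\sin(\gamma_{t/2}/2) = 0$. Along a pendulum trajectory $\lambda_s = e^{s \overrightarrow{H}_\nu}(\lambda)$ the zeros of $s \mapsto \sin(\gamma_s/2)$ form a discrete set, so any such $t$ is a limit of times $t_n \nearrow t$ (or $t_n \searrow t$) with $(\lambda, t_n) \in \widetilde{N}$. The distance function $s \mapsto d(q_0, \mathrm{Exp}(\lambda, s))$ is continuous, so from $d(q_0, \mathrm{Exp}(\lambda, t_n)) = t_n$ (proved in the first step) we obtain $d(q_0, \mathrm{Exp}(\lambda, t)) = t$ by passing to the limit. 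Hence the exceptional times are also times of optimality.

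Combining the two steps gives $t_{\mathrm{cut}}(\lambda) \geq t$ for every $t < \mathbf{t}(\lambda)$, i.e., $t_{\mathrm{cut}}(\lambda) \geq \mathbf{t}(\lambda)$, and with the previously-known upper bound this yields equality. The cases $\lambda \in C_3 \cup C_5$ with $\mathbf{t}(\lambda) = +\infty$ are subsumed by the same reasoning, since $N_5 \subset \widetilde{N}$ entirely and for $\lambda \in C_3$ the zero set of $\sin(\gamma_{t/2}/2)$ is again discrete. I expect the subtle point to be the bookkeeping that identifies exactly which $(\bar{\lambda}, \bar{t})$ a priori could serve as an alternative minimizer and rules them out using membership in $\widetilde{N}$ rather than $N'$; everything else is either continuity of $d$ or a direct appeal to Corollary \ref{cor:Diffeo}.
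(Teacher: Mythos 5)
Your proposal is correct and follows essentially the same route as the paper: both reduce the lower bound $t_{\mathrm{cut}}(\lambda)\geq\mathbf{t}(\lambda)$ to the uniqueness of the preimage of $q_t\in\widetilde{M}$ under $\mathrm{Exp}:\widehat{N}\rightarrow\widehat{M}$, which follows from the diffeomorphism $\mathrm{Exp}:\widetilde{N}\rightarrow\widetilde{M}$ together with the inclusion $\mathrm{Exp}(N^{\prime})\subset M^{\prime}$ and $\widetilde{M}\cap M^{\prime}=\emptyset$ (you merely spell out the competitor-minimizer bookkeeping that the paper leaves implicit). The only divergence is at the exceptional times with $\sin\frac{\gamma_{t/2}}{2}=0$, where the paper picks a later regular time $t_{2}\in(t_{1},\mathbf{t}(\lambda))$ and restricts that minimizer, while you pass to a limit of regular times using continuity of the sub-Riemannian distance; both are valid.
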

\begin{proof}
Let $\lambda\in C\backslash C_{4}=\cup_{i=1}^{3}C_{i}\cup C_{5}$.
 In view of inequality (\ref{eq:tCutbound}), it remains to prove
that $t_{\mathrm{cut}}(\lambda)\geq\mathbf{t}(\lambda)$. Take any
$t_{1}\in(0,\mathbf{t}(\lambda)).$We need to prove that the geodesic
$\mathrm{Exp}(\lambda,t)$ is optimal on the segment $t\in[0,t_{1}].$ 

Consider first the case $\lambda\in\cup_{i=1}^{3}C_{i}$. If $\sin\frac{\gamma_{t_{1}/2}}{2}\neq0$,
then $(\lambda,t_{1})\in\widetilde{N}$, and $q_{1}=\mathrm{Exp}(\lambda,t_{1})\in\widetilde{M}$.
By virtue of Proposition \ref{prop:Exp Di Mi} and Theorem \ref{thm:Exp Di Diffeo},
the point $q_{1}$ has a unique preimage under the mapping $\mathrm{Exp}:\widehat{N}\rightarrow\widehat{M}$.
Thus the geodesic $\mathrm{Exp}(\lambda,t)$ is optimal on the segment
$t\in[0,t_{1}]$. 

If $\lambda\in\cup_{i=1}^{3}C_{i}$ and $\sin\frac{\gamma_{t_{1}/2}}{2}=0$,
then we can choose $t_{2}\in(t_{1},\mathbf{t}(\lambda))$ such that
$\sin\frac{\gamma_{t_{2}/2}}{2}\neq0$. By the argument of the preceding
paragraph, the geodesic $\mathrm{Exp}(\lambda,t)$ is optimal at the
segment $[0,t_{2}]$, thus at the segment $[0,t_{1}]\subset[0,t_{2}]$
as well. 

Finally, if $\lambda\in C_{5}$, then $(\lambda,t_{1})\in\widetilde{N}$,
and the geodesic $\mathrm{Exp}(\lambda,t),\quad t\in[0,t_{1}]$, is
optimal as above. 

We proved that $t_{\mathrm{cut}}(\lambda)\geq\mathbf{t}(\lambda)$,
thus $t_{\mathrm{cut}}(\lambda)=\mathbf{t}(\lambda)$ for any $\lambda\in C\backslash C_{4}$.\hfill$\square$
\end{proof}

We will be able to prove the equality $t_{\mathrm{cut}}(\lambda)=\mathbf{t}(\lambda)$
for $\lambda\in C_{4}$ below after the description of the structure
of the exponential mapping $\mathrm{Exp}:N^{\prime}\rightarrow M^{\prime}$.
The geodesic $\mathrm{Exp}(\lambda,t),\quad\lambda\in C_{4}$, requires
a separate study since it belongs to the set $M^{\prime}$ for all
$t>0$.

Intuitively, Theorem \ref{thm:Cut_time} establishes the fact that
since $\mathrm{Exp}:\widetilde{N}\rightarrow\widetilde{M}$ is a diffeomorphism,
hence upto time $t<\mathbf{t}(\lambda)$ there is a unique point $\nu=(\lambda,s)\in\widetilde{N}$
that is mapped to a unique extremal trajectory $q_{s}=\mathrm{Exp}(\lambda,s)\in\widetilde{M}$
that joins $q_{0}\in M$ to $q_{1}\in\widetilde{M}\subset M$. Hence,
the trajectory $q_{s}=\mathrm{Exp}(\lambda,s)\in\widetilde{M}$ is
optimal and therefore $t_{\mathrm{cut}}(\lambda)=\mathbf{t}(\lambda)$.
It therefore follows that optimal synthesis in the domain $\widetilde{M}$
is given by:
\[
u_{i}(q)=h_{i}(\lambda),\quad i=1,2,\quad(\lambda,t)=\mathrm{Exp}^{-1}(q)\in\widetilde{N},\quad q\in\widetilde{M},
\]
 where $u_{i}$ are the control variables (i.e., translational and
rotational velocities) and $h_{i}$ are the optimal controls defined
in (4.8) \cite{Extremal_Pseudo_Euclid}.

\section{Exponential Mapping on the Boundary of Diffeomorphic Domains}

Until now we have studied the mapping $\mathrm{Exp}:\widetilde{N}\rightarrow\widetilde{M}$
and proved that it is a diffeomorphism. This allowed us to prove that
the cut time $t_{\mathrm{cut}}(\lambda)=t_{1}^{\mathrm{Max}}(\lambda),\quad\lambda\in C\backslash C_{4}$.
In this section we obtain the global structure of the exponential
mapping in order to characterize the cut locus and the Maxwell strata
and to construct the optimal synthesis. Specifically we study the
mapping $\mathrm{Exp}:N^{\prime}\rightarrow M^{\prime}$ where:
\begin{eqnarray*}
N^{\prime} & = & \left\{ (\lambda,t)\in\cup_{i=1}^{3}N_{i}\quad\mid\quad t=t_{1}^{\mathrm{Max}}(\lambda)\quad\textrm{or}\quad\sin\left(\frac{\gamma_{t/2}}{2}\right)=0\right\} \cup\left\{ (\lambda,t)\in N_{4}\quad\mid\quad t\leq2\pi=t_{1}^{\mathrm{conj}}(\lambda)\right\} ,\\
M^{\prime} & = & \left\{ q\in M\quad\mid\quad x^{2}+y^{2}\neq0,\quad z=0\right\} .
\end{eqnarray*}

\subsection{Stratification of $N^{\prime}$}

We define subsets $N_{j}^{\prime}\subset N^{\prime},\quad j=1,\ldots,40$,
as follows:
\begin{itemize}
\item for $j\in\left\{ 1,9,17,21,25,29\right\} $ the sets $N_{j}^{\prime}$
are given by Table \ref{tab:Nj}, for $j=35$ by Table \ref{tab:N35}
and for $j\in\left\{ 33,39\right\} $ by Table \ref{tab:N33_39},
\item for all the rest $j$ the set $N_{j}^{\prime}$ are defined by the
action of reflections $\varepsilon^{i}$ as in (\ref{eq:epsiNj})--(\ref{eq:eps4Nj}):
\end{itemize}
\begin{table}
\begin{centering}
\begin{tabular}{|c|c|c|c|c|}
\hline 
$j$ & $\lambda$ & $p$ & $\tau$ & $k$\tabularnewline
\hline 
\hline 
1 & $C_{1}^{0}$ & $2K$ & $(0,K)$ & $(0,1)$\tabularnewline
\hline 
9 & $C_{2}^{+}$ & $2K$ & $(0,K)$ & $(0,1)$\tabularnewline
\hline 
17 & $C_{1}^{0}$ & $2K$ & $K$ & $(0,1)$\tabularnewline
\hline 
21 & $C_{1}^{0}$ & $2K$ & $0$ & $(0,1)$\tabularnewline
\hline 
25 & $C_{2}^{+}$ & $2K$ & $0$ & $(0,1)$\tabularnewline
\hline 
29 & $C_{2}^{+}$ & $2K$ & $K$ & $(0,1)$\tabularnewline
\hline 
\end{tabular}\protect\caption{\label{tab:Nj}Decomposition $N_{j}^{\prime},\quad j\in\left\{ 1,9,17,21,25,29\right\} $}

\par\end{centering}

\end{table}
 
\begin{table}
\centering{}%
\begin{tabular}{|c|c|c|c|}
\hline 
$\lambda$ & $p$ & $\tau$ & $k$\tabularnewline
\hline 
\hline 
$C_{1}^{0}$ & $(0,2K)$ & 0 & $\left(0,1\right)$\tabularnewline
\hline 
$C_{2}^{+}$ & $(0,2K)$ & 0 & $\left(0,1\right)$\tabularnewline
\hline 
$C_{3}^{0+}$ & $(0,+\infty)$ & 0 & 1\tabularnewline
\hline 
\end{tabular}\protect\caption{\label{tab:N35}Decomposition $N_{j}^{\prime},\quad j=35$}
\end{table}
\begin{table}
\centering{}%
\begin{tabular}{|c|c|c|}
\hline 
$j$ & $\lambda$ & $t$\tabularnewline
\hline 
\hline 
33 & $C_{4}^{0}$ & $2\pi$\tabularnewline
\hline 
39 & $C_{4}^{0}$ & $(0,2\pi)$\tabularnewline
\hline 
\end{tabular}\protect\caption{\label{tab:N33_39}Decomposition $N_{j}^{\prime},\quad j\in\left\{ 33,39\right\} $}
\end{table}
\begin{eqnarray}
\varepsilon^{i}\left(N_{j}^{\prime}\right) & = & N_{j+i}^{\prime},\quad i=1,\ldots,7,\quad j=1,9,\label{eq:epsiNj}\\
\varepsilon^{2i}\left(N_{17}^{\prime}\right) & = & N_{17+i}^{\prime},\quad i=1,2,3,\label{eq:eps2iNj}\\
\varepsilon^{2+i}\left(N_{j}^{\prime}\right) & = & N_{j+i}^{\prime},\quad i=1,2,3,\quad j=21,25,29,35,\label{eq:eps2+iNj}\\
\varepsilon^{4}\left(N_{j}^{\prime}\right) & = & N_{j+1}^{\prime},\quad j=33,39.\label{eq:eps4Nj}
\end{eqnarray}
The following stratification of the set $N^{\prime}$ follows from
the definition of the sets $N_{j}^{\prime}$.
\begin{lemma}
The stratification of $N^{\prime}$ shown in Figures \text{\emph{\ref{fig:N'decomp1}}},\text{\emph{\ref{fig:N'decomp2}}}
is given as:
\begin{equation}
N^{\prime}=\sqcup_{j=1}^{40}N_{j}^{\prime}.\label{eq:N'Decomposn}
\end{equation}
\begin{figure}
\begin{centering}
\includegraphics[scale=0.6]{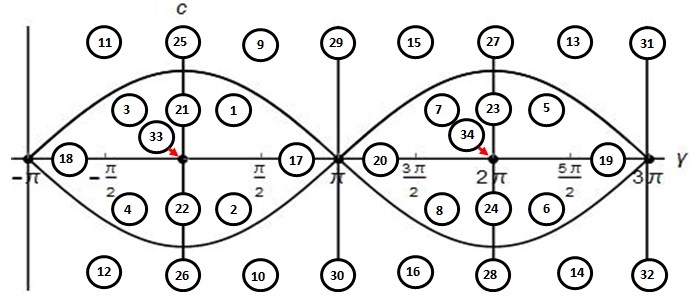}\protect\caption{\label{fig:N'decomp1}The sets $N_{j}^{\prime}$ with $t=t_{1}^{\mathrm{Max}}(\lambda)\quad\textrm{or}\quad\sin\left(\frac{\gamma_{t/2}}{2}\right)=0$}

\par\end{centering}

\end{figure}
\begin{figure}
\centering{}\includegraphics[scale=0.6]{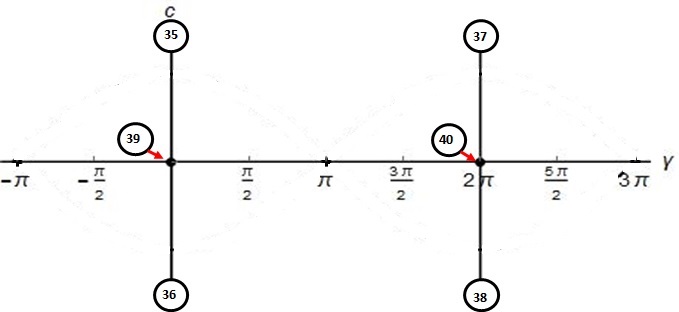}\protect\caption{\label{fig:N'decomp2}The sets $N_{j}^{\prime}$ with $t<t_{1}^{\mathrm{Max}}(\lambda)$,
$\sin\frac{\gamma_{t/2}}{2}=0$}
\end{figure}

\end{lemma}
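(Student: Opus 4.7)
The plan is to verify the equality $N^{\prime}=\sqcup_{j=1}^{40}N_{j}^{\prime}$ by a direct case analysis: I would decompose $N^{\prime}$ first by which subset $N_{i}$ of the phase cylinder contains $\lambda$, and then by which of the two defining conditions ($t=t_{1}^{\mathrm{Max}}(\lambda)$ or $\sin(\gamma_{t/2}/2)=0$) is active. No new analytic ingredient is needed: coverage and disjointness will each reduce to inspecting the tables that define the seed sets and tracking the $G$-action on the elliptic coordinates.

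First I would dispose of the piece with $\lambda\in N_{4}$. By definition this piece is $\widehat{N}_{4}=\{(\lambda,t)\in N_{4}\mid t\leq 2\pi\}$, and the seeds $N_{33}^{\prime}$ and $N_{39}^{\prime}$ of Table~\ref{tab:N33_39}, together with their $\varepsilon^{4}$-images $N_{34}^{\prime},N_{40}^{\prime}$, split this piece along $t=2\pi$ versus $t\in(0,2\pi)$ and along the two connected components of $C_{4}$. Coverage and disjointness here are immediate from the large stabilizer of $C_{4}$ in $G$ (which forces the orbit of each seed to have size two).

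Next I would handle the piece $N^{\prime}\cap\bigcup_{i=1}^{3}N_{i}$. Here the elliptic coordinates $(p,\tau)$ from (\ref{eq:2.29})--(\ref{eq:2.30}) apply, and the identity $\sin(\gamma_{t/2}/2)=s_{1}k\,\mathrm{sn}\,\tau$ recalled in the proof of Proposition~\ref{prop:Proper} turns the second defining condition into $\mathrm{sn}\,\tau=0$. The seeds of Table~\ref{tab:Nj} enumerate the combinations with $p=2K(k)$ (the Maxwell locus) and $\tau$ either in one of the four open arcs cut out by $\mathrm{sn}\,\tau=0$ and $\mathrm{cn}\,\tau=0$, or at a specific endpoint $\tau\in\{0,K\}$, restricted to the components $C_{1}^{0}$ and $C_{2}^{+}$; Table~\ref{tab:N35} enumerates the codimension one locus $\tau=0$ across $C_{1}^{0}\cup C_{2}^{+}\cup C_{3}^{0+}$, glued by continuity in $k$ at $k=1$. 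Applying the seven nontrivial reflections as prescribed in (\ref{eq:epsiNj})--(\ref{eq:eps4Nj}), I would verify that the resulting strata exhaust every pair consisting of a connected component of $C_{i}\setminus\{\text{critical locus}\}$ and a $\tau$-stratum on which the defining conditions of $N^{\prime}$ hold. The reduced multiplicity of the orbits of $N_{17}^{\prime},N_{21}^{\prime},N_{25}^{\prime},N_{29}^{\prime},N_{35}^{\prime}$ reflects nontrivial stabilizers in $G$: a seed supported at the isolated value $\tau=0$ or $\tau=K$ is fixed by the involutions that preserve that value while flipping $c$. The arithmetic $8+8+4+4+4+4+4+2+2=40$ then confirms the total count.

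Pairwise disjointness follows from three observations, each transparent from Tables~\ref{tab:Nj}--\ref{tab:N33_39} and Figures~\ref{fig:N'decomp1}--\ref{fig:N'decomp2}: strata with $\lambda$ in different $N_{i}$ are trivially disjoint; two distinct seeds lie either in different components of the phase cylinder or on different $\tau$-strata; and each nontrivial $\varepsilon^{j}$ carries a seed into a piece distinguished from all other pieces by the signs of $(\gamma,c,\gamma_{t},c_{t})$, as follows from (\ref{eq:symm}) together with the lifted formula (\ref{eq:3.8}). The main obstacle is the combinatorial bookkeeping in the second step: one must check, for every connected component of every $C_{i}$ and every relevant $\tau$-stratum on which the defining conditions of $N^{\prime}$ are satisfied, that exactly one of the forty listed strata contains it. This is fiddly but mechanical, and is best presented by tables parallel to Tables~\ref{tab:Nj}--\ref{tab:N33_39} rather than written out in prose.
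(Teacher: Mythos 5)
Your proposal is correct and takes essentially the same route as the paper, whose entire proof is the assertion that the decomposition follows from the definitions of the sets $N_{j}^{\prime}$ (Tables \ref{tab:Nj}--\ref{tab:N33_39} and the reflection rules (\ref{eq:epsiNj})--(\ref{eq:eps4Nj})) together with the figures; your case analysis by component of $C_{i}$, by active defining condition, and by $G$-orbit, with the count $8+8+4+4+4+4+4+2+2=40$, is simply the worked-out version of that. One minor slip in your heuristic aside: the order-two stabilizer of the seeds $N_{21}^{\prime},N_{25}^{\prime},N_{29}^{\prime},N_{35}^{\prime}$ is generated by $\varepsilon^{2}$ (which reflects $\gamma$ and preserves $c$), not by a $c$-flipping involution --- only $N_{17}^{\prime}$ is stabilized by $\varepsilon^{1}$ --- but this does not affect the argument.
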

From Figures \ref{fig:N'decomp1}, \ref{fig:N'decomp2} we see the
sets $N_{j}^{\prime}$ given in Tables \ref{tab:Nj}, \ref{tab:N35},
\ref{tab:N33_39} pertain to the quadrant of the phase portrait of
vertical subsystem for which $\lambda=(\gamma,c)\in C$ such that
$\gamma\in[0,\pi]$ and $c\in[0,\infty)$. For $\lambda=(\gamma,c)$
in other parts of phase portrait, the sets $N_{j}^{\prime}$ are obtained
by the reflection symmetries (\ref{eq:epsiNj})--(\ref{eq:eps4Nj})
of the vertical subsystem.

\subsection{Stratification of a Quadrant of the Plane $z=0$}

Define the following curves and points in the quadrant $Q=\left\{ (x,y)\in\mathbb{R}^{2}\quad\mid\quad x\geq0,\, y\leq0\right\} $
(see Figure \ref{fig:g1g5}): 
\begin{figure}
\centering{}\includegraphics[scale=0.5]{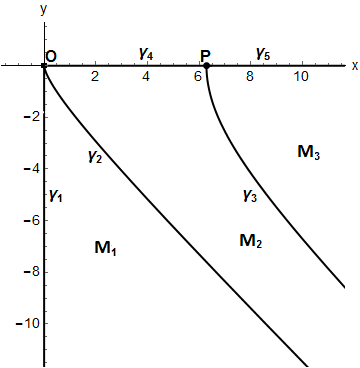}\protect\caption{\label{fig:g1g5}Stratification of the quadrant $Q$}
\end{figure}
\begin{flalign*}
\gamma_{1}:\quad & x=0,\quad y=y_{1}(k)=-\frac{4a(k)}{\sqrt{1-k^{2}}},\quad k\in(0,1),\\
\gamma_{2}:\quad & x=x_{2}(k)=\frac{4k\, a(k)}{1-k^{2}},\quad y=y_{2}(k)=-\frac{4a(k)}{1-k^{2}},\quad k\in(0,1),\\
\gamma_{3}:\quad & x=x_{3}(k)=\frac{4}{1-k^{2}}E(k),\quad y=y_{3}(k)=-\frac{4k}{1-k^{2}}E(k),\quad k\in(0,1),\\
\gamma_{4}:\quad & x=x_{4}(t)=t,\quad y=0,\quad t\in(0,2\pi),\\
\gamma_{5}:\quad & x=x_{5}(k)=\frac{4}{\sqrt{1-k^{2}}}E(k),\quad y=0,\quad k\in(0,1),\\
P:\quad & x=2\pi,\quad y=0,\\
O:\quad & x=0,\quad y=0, & {}
\end{flalign*}
where $a(k)=E(k)-(1-k^{2})K(k),\quad k\in(0,1)$. The curves $\gamma_{1},\ldots,\gamma_{5}$
result from substitution of $t=t_{1}^{\mathrm{Max}}(\lambda)$, and
$\varphi=\tau-p$ from Table \ref{tab:Nj} in the equations of extremal
trajectories for $\lambda\in\cup_{i=1}^{5}C_{i}$. The curves $\gamma_{1},\ldots,\gamma_{5}$
and the point $P$ are the images of certain sets $\mathrm{Exp}\left(N_{j}^{\prime}\right)$
under the projection
\begin{equation}
p:\left\{ q\in M\quad\mid\quad z=0\right\} \rightarrow\mathbb{R}_{x,y}^{2},\quad\left(x,y,0\right)\mapsto(x,y).\label{eq:P}
\end{equation}
\begin{flalign*}
\gamma_{1}= & p\circ\mathrm{Exp}\left(N_{29}^{\prime}\right),\\
\gamma_{2}= & p\circ\mathrm{Exp}\left(N_{25}^{\prime}\right),\\
\gamma_{3}= & p\circ\mathrm{Exp}\left(N_{21}^{\prime}\right),\\
\gamma_{4}= & p\circ\mathrm{Exp}\left(N_{39}^{\prime}\right),\\
\gamma_{5}= & p\circ\mathrm{Exp}\left(N_{17}^{\prime}\right),\\
P= & p\circ\mathrm{Exp}\left(N_{33}^{\prime}\right).
\end{flalign*}
These equalities can be verified easily. From \cite{Max_Conj_SH2}
we know that the first Maxwell points with $t=t_{1}^{\mathrm{Max}}(\lambda)$
and conjugate points with $t=t_{1}^{\mathrm{Max}}(\lambda)$ and $\mathrm{sn}\tau\,\mathrm{cn}\tau=0$
lie in the plane $z=0$. Hence, the curves $\gamma_{1},\ldots,\gamma_{5}$
decompose the fourth quadrant of the plane $z=0$ into various regions
(see Figure \ref{fig:g1g5}). The regularity and mutual disposition
of the curves $\gamma_{1},\ldots,\gamma_{5}$ are described in the
following lemmas. 
\begin{lemma}
\label{lem:a(k)}The function $a(k)$ satisfies the following properties:
\begin{eqnarray}
a:(0,1) & \rightarrow & (0,1)\textrm{ is a diffeomorphism,}\label{eq:adiff}\\
k & \rightarrow & 0\implies a(k)=\frac{\pi}{4}k^{2}+o(k^{2}),\label{eq:ak0}\\
k & \rightarrow & 1-0\implies a(k)=1-\frac{1}{2}k^{\prime2}\ln\left(\frac{1}{k^{\prime}}\right)+O(k^{\prime2})\label{eq:ak1}
\end{eqnarray}
where $k^{\prime}=\sqrt{1-k^{2}}$. Moreover, the function $a(k)$
is convex.\end{lemma}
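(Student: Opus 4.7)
The plan is to establish the four properties essentially by reducing them to standard facts about the complete elliptic integrals $E(k)$ and $K(k)$.

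First, I would verify the diffeomorphism property by a monotonicity argument. Using the classical differentiation formulas
\begin{equation*}
\frac{dE}{dk}=\frac{E(k)-K(k)}{k},\qquad \frac{dK}{dk}=\frac{E(k)-(1-k^{2})K(k)}{k(1-k^{2})},
\end{equation*}
a direct computation from $a(k)=E(k)-(1-k^{2})K(k)$ yields the strikingly clean identity $a'(k)=k\,K(k)$. Since $K(k)>0$ on $(0,1)$, this shows $a$ is smooth and strictly increasing. The boundary values follow from $E(0)=K(0)=\pi/2$ (so $a(0^{+})=0$) and $E(1)=1$, $(1-k^{2})K(k)\to 0$ as $k\to 1^{-}$ (so $a(1^{-})=1$), thereby establishing \eqref{eq:adiff}. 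Convexity is an immediate bonus: $a''(k)=K(k)+k\,K'(k)>0$ because $K$ is positive and increasing on $(0,1)$.

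For the asymptotic expansion \eqref{eq:ak0} at $k\to 0$, I would substitute the hypergeometric expansions
\begin{equation*}
K(k)=\frac{\pi}{2}\Bigl(1+\tfrac{k^{2}}{4}+o(k^{2})\Bigr),\qquad E(k)=\frac{\pi}{2}\Bigl(1-\tfrac{k^{2}}{4}+o(k^{2})\Bigr),
\end{equation*}
into $a(k)=E(k)-(1-k^{2})K(k)$. The constant and $O(k^{0})$ terms cancel, and the $k^{2}$ terms combine to $\frac{\pi}{2}\cdot\frac{k^{2}}{2}=\frac{\pi}{4}k^{2}$, giving the required asymptotic.

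The limit $k\to 1^{-}$ is the most delicate step because it involves the logarithmic singularity of $K$. I would use the standard Legendre asymptotics
\begin{equation*}
K(k)=\ln\!\tfrac{4}{k'}+O\!\bigl(k'^{2}\ln\tfrac{1}{k'}\bigr),\qquad E(k)=1+\tfrac{1}{2}k'^{2}\bigl(\ln\tfrac{4}{k'}-\tfrac{1}{2}\bigr)+O\!\bigl(k'^{4}\ln\tfrac{1}{k'}\bigr),
\end{equation*}
where $k'=\sqrt{1-k^{2}}$. Plugging these in, the leading $k'^{2}\ln(4/k')$ contributions from $E$ and $(1-k^{2})K=k'^{2}K$ combine as $\tfrac12 k'^{2}\ln\tfrac{4}{k'}-k'^{2}\ln\tfrac{4}{k'}=-\tfrac12 k'^{2}\ln\tfrac{4}{k'}$, which after splitting $\ln\tfrac{4}{k'}=\ln\tfrac{1}{k'}+\ln 4$ gives the announced expansion \eqref{eq:ak1}, with the $\ln 4$ piece and the $-\tfrac14 k'^{2}$ piece absorbed into the $O(k'^{2})$ remainder.

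The main obstacle is purely bookkeeping in the $k\to 1^{-}$ expansion: one must carry the right number of terms in the Legendre expansions and check that the remainders combine into $O(k'^{2})$ without inadvertently losing a logarithmic factor. All other steps reduce to the one-line identity $a'(k)=k\,K(k)$ together with elementary estimates for $E$ and $K$.
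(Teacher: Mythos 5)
Your proposal is correct and follows essentially the same route as the paper: the identity $a'(k)=k\,K(k)$ gives monotonicity, the boundary values, and convexity (since $k\,K(k)$ is increasing), while the two asymptotic expansions follow from the standard series for $E$ and $K$ near $k=0$ and the Legendre logarithmic asymptotics near $k=1$. Your bookkeeping in both expansions checks out, and in fact your $k\to 0$ expansion of $E(k)$ corrects a typographical slip in the paper's own proof.
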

\begin{proof}
If $k\rightarrow0$, then 
\begin{eqnarray*}
K(k) & = & \frac{\pi}{2}\left(1+\frac{k^{2}}{4}\right)+o(k^{2}),\\
E(k) & = & \frac{\pi}{4}\left(1-\frac{k^{2}}{4}\right)+o(k^{2}),
\end{eqnarray*}
which gives asymptotics (\ref{eq:ak0}). If $k\rightarrow1-0$, then
\begin{eqnarray*}
K(k) & = & \ln\left(\frac{1}{k^{\prime}}\right)+o(k^{\prime}),\\
E(k) & = & 1+\frac{1}{2}k^{\prime2}\ln\left(\frac{1}{k^{\prime}}\right)+O(k^{\prime2}),
\end{eqnarray*}
which gives asymptotics (\ref{eq:ak1}). Finally, property (\ref{eq:adiff})
follows since
\begin{eqnarray*}
\frac{da}{dk} & = & k\, K(k)>0,\\
\lim_{k\rightarrow0}a(k) & = & 0,\\
\lim_{k\rightarrow1-0}a(k) & = & 1.
\end{eqnarray*}
The function $a(k)$ is convex since $\frac{da}{dk}=k\, K(k)$ increases
$\forall k\in(0,1)$. \hfill$\square$\end{proof}

\begin{lemma}
\label{lem:g1}The function $y=y_{1}(k)$ defines a diffeomorphism
$y_{1}:(0,1)\rightarrow(-\infty,0).$ Moreover,
\begin{eqnarray}
\lim_{k\rightarrow0^{+}}y_{1}(k) & = & 0,\label{eq:lim_y1_0}\\
\lim_{k\rightarrow1^{-}}y_{1}(k) & = & -\infty.\label{eq:lim_y1_inf}
\end{eqnarray}
\end{lemma}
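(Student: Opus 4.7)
The plan is to establish the three standard ingredients of a diffeomorphism on an interval: smoothness, strict monotonicity (with nonvanishing derivative), and the correct limits at the endpoints. Since $y_1(k) = -4a(k)/\sqrt{1-k^2}$ is a composition of smooth functions on $(0,1)$, with $\sqrt{1-k^2}$ nonvanishing there, smoothness on $(0,1)$ is immediate and needs no more than a sentence.

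Next I would compute the derivative and show it is strictly negative. Differentiating gives
\begin{equation*}
\frac{dy_1}{dk} = -\frac{4}{(1-k^2)^{3/2}}\bigl[(1-k^2)\,a'(k) + k\,a(k)\bigr].
\end{equation*}
By Lemma \ref{lem:a(k)} we have $a'(k) = k\,K(k) > 0$ and $a(k) > 0$ on $(0,1)$, so the bracket is strictly positive and $dy_1/dk < 0$ throughout $(0,1)$. This gives strict monotonicity and, via the inverse function theorem applied pointwise, shows that the inverse is smooth on the image.

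For the boundary behavior I would plug in the asymptotics for $a(k)$ already established in Lemma \ref{lem:a(k)}. As $k \to 0^+$, the expansion $a(k) = \tfrac{\pi}{4}k^2 + o(k^2)$ combined with $\sqrt{1-k^2} \to 1$ yields $y_1(k) \to 0$, proving (\ref{eq:lim_y1_0}). As $k \to 1^-$, the expansion $a(k) = 1 - \tfrac{1}{2}k'^{2}\ln(1/k') + O(k'^{2})$ gives $a(k) \to 1$ while $\sqrt{1-k^2} = k' \to 0$, so $y_1(k) \to -\infty$, proving (\ref{eq:lim_y1_inf}). Combined with continuity and strict monotonicity, the intermediate value theorem yields the bijection $y_1 : (0,1) \to (-\infty,0)$, and nonvanishing of $y_1'(k)$ promotes the continuous inverse to a smooth one.

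I do not expect any serious obstacle here; every analytic fact needed is contained in Lemma \ref{lem:a(k)}. The only mildly delicate point is verifying that the bracket $(1-k^2)a'(k) + k\,a(k)$ is manifestly positive on all of $(0,1)$, which is a direct consequence of the two factors being positive there. The endpoint limits reduce entirely to the already-proved asymptotics of $a(k)$, so the lemma is essentially a routine corollary of the properties of $a(k)$.
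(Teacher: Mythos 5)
Your proposal is correct and follows essentially the same route as the paper: differentiate, check the derivative is strictly negative, and read off the endpoint limits from the asymptotics of $a(k)$ in Lemma \ref{lem:a(k)}. The only cosmetic difference is that the paper simplifies the bracket $(1-k^{2})a'(k)+k\,a(k)$ to $kE(k)$ (using $a'(k)=kK(k)$), obtaining $\frac{dy_{1}}{dk}=\frac{-4kE(k)}{(1-k^{2})^{3/2}}<0$, whereas you argue positivity of the bracket term by term; both are valid.
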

\begin{proof}
The function $y=y_{1}(k)$ is a strictly decreasing function with:
\[
\frac{dy_{1}}{dk}=\frac{-4kE(k)}{(1-k^{2})^{\frac{3}{2}}}<0,\quad k\in(0,1).
\]
Further, Lemma \ref{lem:a(k)} yields the asymptotics:
\begin{eqnarray*}
k & \rightarrow & 0\implies y_{1}(k)=\frac{-4a(k)}{\sqrt{1-k^{2}}}\rightarrow0,\\
k & \rightarrow & 1-0\implies y_{1}(k)\thicksim-\frac{4}{k^{\prime}}\rightarrow-\infty,
\end{eqnarray*}
and the statement of this lemma follows. \hfill$\square$\end{proof}

\begin{lemma}
\label{lem:g4}The function $x=x_{4}(t)$ defines a diffeomorphism
$x_{4}:(0,2\pi)\rightarrow(0,2\pi)$. Moreover, 
\begin{eqnarray*}
\lim_{t\rightarrow0^{+}}x_{4}(t) & = & 0,\\
\lim_{k\rightarrow2\pi^{-}}x_{4}(t) & = & 2\pi.
\end{eqnarray*}
\end{lemma}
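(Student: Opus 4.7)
The plan is to observe that the statement is essentially trivial once one recalls the definition $x_4(t)=t$ from the parametrization of $\gamma_4$ given just before Lemma \ref{lem:a(k)}. So the proof will occupy only a few lines and serve mainly as a formal record of what the curve $\gamma_4$ looks like in the quadrant $Q$.

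First, I would note that $x_4$ is the restriction of the identity map $\mathbb{R}\to\mathbb{R}$ to the open interval $(0,2\pi)$. The identity map is smooth with smooth inverse on any open set, so $x_4:(0,2\pi)\to(0,2\pi)$ is automatically a diffeomorphism. Equivalently, one checks directly that $\dfrac{dx_4}{dt}=1>0$ for all $t\in(0,2\pi)$, that $x_4$ is strictly monotone increasing, and that $x_4$ maps $(0,2\pi)$ bijectively onto $(0,2\pi)$, which, combined with smoothness, gives the diffeomorphism property.

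Second, the two limits are immediate from continuity of the identity map: $\lim_{t\to 0^{+}}x_4(t)=\lim_{t\to 0^{+}}t=0$ and $\lim_{t\to 2\pi^{-}}x_4(t)=\lim_{t\to 2\pi^{-}}t=2\pi$. There is no genuine obstacle here; the only point worth flagging is that this lemma, unlike Lemma \ref{lem:g1}, does not rely on the asymptotics of $a(k)$ or on complete elliptic integrals, because the curve $\gamma_4$ comes from the degenerate stratum $C_4^{0}$ where the extremal is simply a straight line in $x$ with $y=z=0$ (cf.\ formula \eqref{eq:2.16}). So the main ``work'' is really bookkeeping: confirming that the parametrization of $\gamma_4$ inherited from $\mathrm{Exp}(N_{39}^{\prime})$ is indeed $t\mapsto t$ on the stated interval, which is read off directly from Table \ref{tab:N33_39} together with \eqref{eq:2.16}.
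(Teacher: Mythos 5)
Your proof is correct and matches the paper's own argument, which likewise just observes that $x_{4}(t)=t$ is a smooth bijection with smooth inverse and obtains the limits by direct substitution. Your extra remark tracing the parametrization back to the stratum $C_{4}^{0}$ via \eqref{eq:2.16} is accurate bookkeeping but not needed beyond what the paper records.
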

\begin{proof}
Clearly $x_{4}(t)$ is a smooth bijection with a smooth inverse. Hence
it is a diffeomorphsim. The limits can be calculated by direct substitution
in $x_{4}(t)$. \hfill$\square$\end{proof}

\begin{lemma}
\label{lem:g5}The function $x=x_{5}(k)$ defines a diffeomorphism
$x_{5}:(0,1)\rightarrow(2\pi,+\infty).$ Moreover,
\begin{eqnarray*}
\lim_{k\rightarrow0^{+}}x_{5}(k) & = & 2\pi,\\
\lim_{k\rightarrow1^{-}}x_{5}(k) & = & +\infty.
\end{eqnarray*}
\end{lemma}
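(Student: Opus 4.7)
The plan is to prove the lemma in three short moves, mirroring the structure used for Lemmas \ref{lem:g1} and \ref{lem:g4}. First I would verify the boundary limits, then establish strict monotonicity by computing $dx_5/dk$ and expressing it in terms of the function $a(k)$ from Lemma \ref{lem:a(k)}, and finally invoke smoothness plus a nonvanishing derivative to upgrade the bijection to a diffeomorphism.

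For the limits, I would use the classical values $\lim_{k\to 0^+} E(k)=\pi/2$ and $\lim_{k\to 1^-} E(k)=1$ together with $\sqrt{1-k^2}\to 1$ and $\sqrt{1-k^2}\to 0^+$ respectively. Direct substitution gives $x_5(k)\to 4\cdot(\pi/2)=2\pi$ as $k\to 0^+$ and $x_5(k)\to +\infty$ as $k\to 1^-$. Smoothness of $x_5$ on $(0,1)$ is immediate since $E(k)$ is smooth and $\sqrt{1-k^2}>0$ on the open interval.

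The key computation is the derivative. Using the standard identity $dE/dk=(E(k)-K(k))/k$, I would differentiate $x_5(k)=4E(k)(1-k^2)^{-1/2}$ and combine the two resulting terms over the common denominator $k(1-k^2)^{3/2}$. After expanding and cancelling the $\pm 4k^2 E(k)$ contributions, the numerator collapses to $4\bigl(E(k)-(1-k^2)K(k)\bigr)=4a(k)$. Hence
\begin{equation*}
\frac{dx_5}{dk}=\frac{4\,a(k)}{k\,(1-k^2)^{3/2}},
\end{equation*}
and by Lemma \ref{lem:a(k)} we have $a(k)>0$ for $k\in(0,1)$, so $dx_5/dk>0$ throughout. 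This yields strict monotonicity.

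Combining the three ingredients, $x_5$ is a smooth strictly increasing map from $(0,1)$ onto $(2\pi,+\infty)$ with nowhere-vanishing derivative; by the inverse function theorem it is a diffeomorphism. I do not expect any real obstacle: the only mildly delicate point is remembering the sign of $dE/dk$ and tracking the algebra so that the numerator reorganizes exactly into $a(k)$, which is precisely why the function $a(k)$ was introduced in Lemma \ref{lem:a(k)}.
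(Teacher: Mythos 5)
Your proposal is correct and follows essentially the same route as the paper: both rest on the identity $dx_5/dk = 4a(k)/\bigl(k(1-k^2)^{3/2}\bigr)>0$ together with the limits $E(k)\to\pi/2$ and $E(k)\to 1$ at the endpoints. The only difference is that you spell out the derivation of the derivative (and correctly call the function increasing, whereas the paper's text contains the slip ``strictly decreasing'' despite exhibiting a positive derivative).
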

\begin{proof}
The function $x=x_{5}(k)$ is a strictly decreasing function with:
\[
\frac{dx_{5}}{dk}=\frac{4a(k)}{k(1-k^{2})^{\frac{3}{2}}}>0,
\]
and 
\begin{eqnarray*}
k & \rightarrow & 0\implies E(k)\rightarrow\frac{\pi}{2}\implies x_{5}(k)\rightarrow2\pi,\\
k & \rightarrow & 1-0\implies E(k)\rightarrow1\implies x_{5}(k)\rightarrow+\infty,
\end{eqnarray*}
and the statement of the lemma follows. \hfill$\square$\end{proof}

\begin{lemma}
\label{lem:g2}The functions $x=x_{2}(k),\quad y=y_{2}(k)\quad k\in(0,1),$
define parametrically a function $x=x_{2}(y)$ which is a diffeomorphism
$x_{2}:(-\infty,0)\rightarrow(0,+\infty)$ with $\lim_{y\rightarrow-\infty}x_{2}(y)=+\infty,\quad\lim_{y\rightarrow0^{-}}x_{2}(y)=0$.
Moreover,
\begin{equation}
-y-2<x_{2}(y)<-y,\quad y\in(-\infty,0).\label{eq:x2bound}
\end{equation}
The curve $\gamma_{2}$ is convex, has near the origin the asymptotics
\begin{equation}
y=-\pi^{\frac{1}{3}}x^{\frac{2}{3}}+o\left(x^{\frac{2}{3}}\right),\quad x\rightarrow0,\label{eq:g2as}
\end{equation}
and has an asymptote $y+x+2=0$ as $x\rightarrow\infty$.\end{lemma}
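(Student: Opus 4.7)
The plan is to study $\gamma_{2}$ through its parametrization $k \mapsto (x_{2}(k), y_{2}(k))$, exploiting two algebraic identities that follow directly from the definitions:
\[
x_{2}(k) = -k\, y_{2}(k), \qquad x_{2}(k) + y_{2}(k) = -\frac{4a(k)}{1+k}.
\]
The second identity already yields the asymptote: as $k \to 1^{-}$ Lemma \ref{lem:a(k)} gives $a(k) \to 1$, so $x_{2}(k) + y_{2}(k) \to -2$, which is exactly the statement that $y+x+2=0$ is an asymptote of $\gamma_{2}$ as $x \to \infty$.

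Monotonicity follows from direct differentiation using $a'(k) = k\,K(k)$ (Lemma \ref{lem:a(k)}):
\[
\frac{dx_{2}}{dk} = \frac{4\bigl[a(k)(1+k^{2}) + k^{2}(1-k^{2})K(k)\bigr]}{(1-k^{2})^{2}} > 0,
\]
\[
\frac{dy_{2}}{dk} = -\frac{4k\bigl[(1-k^{2})K(k) + 2a(k)\bigr]}{(1-k^{2})^{2}} < 0.
\]
Combined with the boundary behavior obtained from Lemma \ref{lem:a(k)} (namely $x_{2}, y_{2} \to 0$ as $k \to 0^{+}$ and $x_{2} \to +\infty$, $y_{2} \to -\infty$ as $k \to 1^{-}$), this shows that $y_{2}\colon (0,1) \to (-\infty,0)$ is a decreasing diffeomorphism and hence $x_{2}(y) := x_{2} \circ y_{2}^{-1}$ is a diffeomorphism $(-\infty, 0) \to (0, +\infty)$ with the announced limits.

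For the bounds \eqref{eq:x2bound}, the upper inequality $x_{2}(y) < -y$ is immediate from $x_{2}+y_{2}=-4a(k)/(1+k)<0$. The lower inequality $x_{2}(y) > -y - 2$ is equivalent to $2a(k) < 1+k$; since $a\colon[0,1]\to[0,1]$ is strictly convex with $a(0)=0$ and $a(1)=1$, the chord bound gives $a(k) < k$ on $(0,1)$, and $k < (1+k)/2$ since $k < 1$, so $2a(k) < 2k < 1+k$. The near-origin asymptotic \eqref{eq:g2as} follows from $a(k) = \tfrac{\pi}{4}k^{2} + o(k^{2})$: substitution yields $x_{2} = \pi k^{3} + o(k^{3})$ and $y_{2} = -\pi k^{2} + o(k^{2})$, so inverting the first gives $k = (x/\pi)^{1/3}(1+o(1))$ and substituting into the second produces $y = -\pi^{1/3} x^{2/3} + o(x^{2/3})$.

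The main obstacle is the convexity of $\gamma_{2}$. Viewed as a graph $y=y(x)$, the slope $y'(x) = \dot{y}_{2}(k)/\dot{x}_{2}(k)$ tends to $-\infty$ as $x \to 0^{+}$ (from the $x^{2/3}$ behavior near the origin) and to $-1$ as $x \to \infty$ (from the asymptote), so convexity is equivalent to the monotonicity $\tfrac{d}{dk}(\dot{y}_{2}/\dot{x}_{2}) > 0$ on $(0,1)$. I would verify this by differentiating the explicit formulas for $\dot{x}_{2}$ and $\dot{y}_{2}$ computed above, eliminating the derivative of $K$ via the standard identity $K'(k) = a(k)/(k(1-k^{2}))$, and reducing the numerator (after clearing the positive factor $\dot{x}_{2}^{2}$) to an expression polynomial in $a(k), K(k), k$ whose positivity can be extracted using the elementary bound $a(k) < k$ established in the previous step. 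This elliptic-integral inequality is the delicate point; if a direct sign analysis turns out unwieldy, an alternative is to use the implicit description $4|y|\,a(x/|y|) = y^{2} - x^{2}$ and deduce convexity of $\gamma_{2}$ from the convexity of $a$ by implicit differentiation.
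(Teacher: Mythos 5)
Most of your argument matches the paper's proof and is correct: the monotonicity of $x_{2}(k)$, $y_{2}(k)$ via $a'(k)=kK(k)$, the limits, the bound \eqref{eq:x2bound} reduced to $2a(k)<1+k$ via the chord bound $a(k)<k$ for the convex function $a$, the asymptote from $x_{2}+y_{2}=-4a(k)/(1+k)\to-2$, and the asymptotics \eqref{eq:g2as} from $x_{2}=\pi k^{3}+o(k^{3})$, $y_{2}=-\pi k^{2}+o(k^{2})$ are all exactly what the paper does (your identity $x_{2}=-k\,y_{2}$ is also used there, in the proof of Lemma \ref{lem:ExpN1}).

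The genuine gap is the convexity of $\gamma_{2}$, which you correctly flag as the delicate point but do not prove, and the tool you propose for it is the wrong one. Writing $\alpha(k)=\dot y_{2}/\dot x_{2}=-k(a+E)/(a+k^{2}E)$ and differentiating, the sign of $d\alpha/dk$ is governed by the quantity $3E^{2}-(5-k^{2})EK+2(1-k^{2})K^{2}$, which after substituting $E=a+(1-k^{2})K$ becomes
\[
3a^{2}+(1-5k^{2})aK-2k^{2}(1-k^{2})K^{2}.
\]
This quadratic in $a$ is negative at $a=0$ and at $a=k^{2}K$, hence negative for all $a\in(0,k^{2}K)$; so the inequality you need is $0<a(k)<k^{2}K(k)$, i.e.\ precisely $1-k^{2}<E/K<1$ (equivalently $a>0$ and $E<K$), \emph{not} $a(k)<k$. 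Indeed $a(k)<k$ does not imply $a<k^{2}K$ for small $k$ (there $a\sim\pi k^{2}/4$ while $k^{2}K\sim\pi k^{2}/2$, and $k$ is much larger than both), so the positivity ``extraction'' you describe would not go through as stated. The paper's proof carries out exactly this reduction: it observes that $t=E/K\in(1-k^{2},1)$ and that the quadratic $3t^{2}-(5-k^{2})t+2(1-k^{2})$ is negative at both endpoints of that interval, hence on all of it. Your fallback via the implicit relation $4|y|\,a(x/|y|)=y^{2}-x^{2}$ is a correct identity but you give no argument that convexity of $a$ transfers to convexity of $\gamma_{2}$ through it, so it does not close the gap either.
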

\begin{proof}
Notice that
\begin{eqnarray*}
k & \rightarrow & 0\implies x_{2}(k)\rightarrow0,\quad y_{2}(k)\rightarrow0,\\
k & \rightarrow & 1\implies x_{2}(k)\rightarrow+\infty,\quad y_{2}(k)\rightarrow-\infty.
\end{eqnarray*}
Also,
\begin{eqnarray*}
\frac{dx_{2}}{dk} & = & \frac{4\left(\left(1+k^{2}\right)E(k)-(1-k^{2})K(k)\right)}{\left(1-k^{2}\right)^{2}}=\frac{4\left(a(k)+k^{2}E(k)\right)}{k\left(1-k^{2}\right)^{2}}>0,\\
\frac{dy_{2}}{dk} & = & -\frac{4k\left(2E(k)-(1-k^{2})K(k)\right)}{\left(1-k^{2}\right)^{2}}=-\frac{4k\left(a(k)+E(k)\right)}{\left(1-k^{2}\right)^{2}}<0,
\end{eqnarray*}
thus the functions $x_{2}(k)$ and $y_{2}(k)$ define diffeomorphisms
$x_{2}:(0,1)\rightarrow(0,+\infty)$ and $y_{2}:(0,1)\rightarrow(-\infty,0)$.
So these functions define parametrically the diffeomorphism 
\begin{eqnarray*}
x & = & x_{2}(y),\quad y\in(-\infty,0),\quad x\in(0,+\infty),\\
y & = & y_{2}(x),\quad x\in(0,+\infty),\quad y\in(-\infty,0).
\end{eqnarray*}
Notice that 
\begin{eqnarray*}
\lim_{y\rightarrow-\infty}x_{2}(y) & = & \lim_{k\rightarrow1}x_{2}(k)=+\infty,\\
\lim_{y\rightarrow0-}x_{2}(y) & = & \lim_{k\rightarrow0-}x_{2}(k)=0.
\end{eqnarray*}
Now we show that the curve $\gamma_{2}$ is convex. We have
\begin{eqnarray}
\frac{dy_{2}}{dx} & = & \frac{dy_{2}/dk}{dx_{2}/dk}=\alpha(k),\nonumber \\
\alpha(k) & = & -k\frac{2E(k)-(1-k^{2})K(k)}{(1+k^{2})E(k)-(1-k^{2})K(k)},\label{eq:alpha(k)}\\
\frac{d\alpha}{dk} & = & -\left(1-k^{2}\right)\frac{3E^{2}(k)-(5-k^{2})E(k)\, K(k)+2(1-k^{2})K^{2}(k)}{\left(\left(1+k^{2}\right)E(k)-\left(1-k^{2}\right)K(k)\right)^{2}}.\label{eq:da(k)}
\end{eqnarray}
Since $a(k)=E(k)-\left(1-k^{2}\right)K(k)\in(0,1)$, then $\frac{E(k)}{K(k)}\in\left(\left(1-k^{2}\right),1\right)$.
But the numerator of the function $t=\frac{E(k)}{K(k)}\mapsto3t^{2}-\left(5-k^{2}\right)t+2\left(1-k^{2}\right)$
is negative for $t\in\left(\left(1-k^{2}\right),1\right)$ thus the
numerator of fraction (\ref{eq:da(k)}) is positive. Therefore, $\frac{d\alpha}{dk}>0$,
i.e., $\frac{dy_{2}}{dx}$ is increasing for $k\in(0,1)$ and also
increasing for $x\in(0,+\infty)$. Thus the function $y_{2}(x)$ and
its graph, i.e., the curve $\gamma_{2}$, are convex. The second inequality
in (\ref{eq:x2bound}) follows since
\[
\frac{x_{2}(k)}{y_{2}(k)}=-k>-1,\quad k\in(0,1).
\]
The first inequality in (\ref{eq:x2bound}) and existence of the asymptote
$y+x+2=0$ follows from equalities:
\begin{eqnarray*}
\lim_{k\rightarrow1-}\frac{y_{2}(k)}{x_{2}(k)} & = & -1,\\
\lim_{k\rightarrow1-}\left(y_{2}(x)+x_{2}(y)\right) & = & -2,\\
\left(y_{2}(x)+x_{2}(y)\right)+2 & = & \frac{2}{1+k}\left(1+k-2a(k)\right)>0,
\end{eqnarray*}
since $a(k)<k<\frac{1+k}{2}$ for $k\in(0,1)$. Finally asymptotics
(\ref{eq:g2as}) follows since
\[
x_{2}(k)=\pi k^{3}+o(k^{3}),\quad y_{2}(k)=-\pi k^{2}+o(k^{2}),\quad k\rightarrow0.
\]
 \hfill$\square$

A plot of the curve $\gamma_{2}$ with its bounds given by (\ref{eq:x2bound})
is shown in Figure \ref{fig:g2}.
\begin{figure}
\centering{}\includegraphics[scale=0.6]{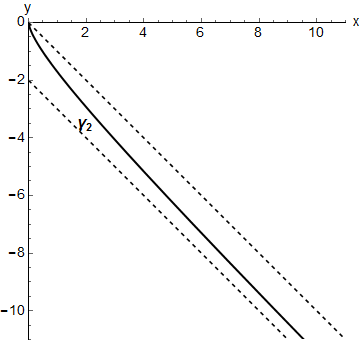}\protect\caption{\label{fig:g2}The curve $\gamma_{2}$ and its bounds $y+x=-2,\quad y+x=0$.}
\end{figure}
\end{proof}

\begin{lemma}
\label{lem:g3}The functions $x=x_{3}(k),\quad y=y_{3}(k),$ define
parametrically a function $x=x_{3}(y)$ which is a diffeomorphism
$x_{3}:(-\infty,0)\rightarrow(2\pi,+\infty)$ with $\lim_{y\rightarrow-\infty}x_{3}(y)=+\infty,\quad\lim_{y\rightarrow0^{+}}x_{3}(y)=2\pi$.
Moreover,
\begin{equation}
x_{3}(y)>2\pi,\quad x_{3}(y)>2-y,\quad y\in(-\infty,0).\label{eq:x3bound}
\end{equation}
The curve $\gamma_{3}$ is convex and has an asymptote $y+x=2$ as
$x\rightarrow\infty$. \end{lemma}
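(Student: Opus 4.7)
The plan is to parallel closely the proof of Lemma~\ref{lem:g2} by parametrizing $\gamma_3$ by $k \in (0,1)$, monitoring the signs of $dx_3/dk$ and $dy_3/dk$ to establish the diffeomorphic structure, and then handling bounds, convexity, and asymptote in turn. A useful structural observation is that $y_3(k) = -k\,x_3(k)$, which ties the two coordinate functions together and simplifies many of the calculations that follow.

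First I would compute, using the standard identity $E'(k) = (E(k) - K(k))/k$,
\[
\frac{dx_3}{dk} = \frac{4\bigl((1+k^2)E(k) - (1-k^2)K(k)\bigr)}{k(1-k^2)^2} = \frac{4(a(k) + k^2 E(k))}{k(1-k^2)^2} > 0,
\]
so $x_3:(0,1) \to (2\pi,+\infty)$ is a diffeomorphism, the endpoint limits coming from $E(0) = \pi/2$ and $E(1) = 1$ respectively. The identity $y_3 = -k\,x_3$ then gives $dy_3/dk = -x_3 - k\,dx_3/dk < 0$, so $y_3:(0,1) \to (-\infty,0)$ is a diffeomorphism. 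Composing, I obtain the diffeomorphism $x = x_3(y):(-\infty,0) \to (2\pi,+\infty)$ with the stated endpoint limits.

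For the bounds, $x_3(y) > 2\pi$ is immediate from monotonicity and the limit $x_3(y) \to 2\pi$ as $y \to 0^-$. The second bound $x_3(y) > 2 - y$ I would establish by first simplifying
\[
x_3(k) + y_3(k) - 2 = \frac{4E(k)}{1+k} - 2 = \frac{2\bigl(2E(k) - 1 - k\bigr)}{1+k},
\]
so it suffices to show $\varphi(k) := 2E(k) - 1 - k > 0$ on $(0,1)$. Since $\varphi(1) = 0$, $\varphi(0) = \pi - 1 > 0$, and $\varphi'(k) = 2(E-K)/k - 1 < -1$ (using $E(k) < K(k)$ on $(0,1)$), the function $\varphi$ is strictly decreasing, hence positive on $(0,1)$. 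The asymptote $x + y = 2$ falls out of the same identity, since $4E(k)/(1+k) \to 2$ as $k \to 1^-$ while simultaneously $x_3 \to +\infty$.

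The main obstacle will be the convexity of $\gamma_3$. Using $y_3 = -k\,x_3$, the slope is
\[
\frac{dy_3}{dx_3} = -\frac{k\bigl(E(k) + a(k)\bigr)}{a(k) + k^2 E(k)},
\]
which tends to $-\infty$ as $k \to 0^+$ and to $-1$ as $k \to 1^-$, consistent with a convex curve tangent to the asymptote of slope $-1$. To establish convexity rigorously I must show that this quotient is strictly increasing in $k$, which, after differentiation and clearing denominators, reduces to verifying the sign of a rational combination of $E$, $K$, $a$, and $k$. Following the template of equation~(\ref{eq:da(k)}) in Lemma~\ref{lem:g2}, the cleanest path is to convert the inequality into a statement about the ratio $E(k)/K(k) \in (1-k^2, 1)$ and show that the relevant quadratic polynomial in this ratio has the required sign throughout the interval. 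That sign analysis is the one genuinely delicate computational step.
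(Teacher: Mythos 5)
Your proposal follows essentially the same route as the paper's proof: monotonicity of $x_{3}(k)$ and $y_{3}(k)$ via the derivative formulas, the identity $x_{3}+y_{3}-2=\frac{2\left(2E(k)-1-k\right)}{1+k}$ for the second bound and the asymptote, and monotonicity of the slope for convexity; your additional touches (the shortcut $y_{3}=-k\,x_{3}$ and the explicit monotonicity argument showing $2E(k)-1-k>0$, an inequality the paper merely asserts) are correct. The one step you defer as ``genuinely delicate'' requires no new work: the slope you compute, $-k\left(E(k)+a(k)\right)/\left(a(k)+k^{2}E(k)\right)$, is identically the function $\alpha(k)$ of (\ref{eq:alpha(k)}), i.e.\ $\gamma_{3}$ has the same slope as $\gamma_{2}$ as a function of $k$, so its monotonicity follows verbatim from the sign analysis of (\ref{eq:da(k)}) already carried out in the proof of Lemma~\ref{lem:g2}.
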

\begin{proof}
Notice that
\begin{eqnarray*}
k & \rightarrow & 0\implies x_{3}(k)\rightarrow2\pi,\quad y_{3}(k)\rightarrow0,\\
k & \rightarrow & 1\implies x_{3}(k)\rightarrow+\infty,\quad y_{3}(k)\rightarrow-\infty.
\end{eqnarray*}
Furthermore,
\begin{eqnarray*}
\frac{dx_{3}}{dk} & = & \frac{4\left(\left(1+k^{2}\right)E(k)-(1-k^{2})K(k)\right)}{k\left(1-k^{2}\right)^{2}}=\frac{4\left(a(k)+k^{2}E(k)\right)}{k\left(1-k^{2}\right)^{2}}>0,\\
\frac{dy_{3}}{dk} & = & -\frac{4\left(2E(k)-(1-k^{2})K(k)\right)}{k\left(1-k^{2}\right)^{2}}=-\frac{4\left(a(k)+E(k)\right)}{k\left(1-k^{2}\right)^{2}}<0,
\end{eqnarray*}
thus the functions $x_{3}(k)$ and $y_{3}(k)$ define diffeomorphisms
$x_{3}:(0,1)\rightarrow(2\pi,+\infty)$ and $y_{3}:(0,1)\rightarrow(-\infty,0)$.
So these functions define parametrically a diffeomorphism 
\begin{eqnarray*}
x & = & x_{3}(y),\quad y\in(-\infty,0),\quad x\in(2\pi,+\infty).
\end{eqnarray*}
Notice that 
\begin{eqnarray*}
\lim_{y\rightarrow-\infty}x_{3}(y) & = & \lim_{k\rightarrow1}x_{3}(k)=+\infty,\\
\lim_{y\rightarrow0+}x_{3}(y) & = & \lim_{k\rightarrow0+}x_{3}(k)=2\pi.
\end{eqnarray*}
Since $\frac{dx_{3}}{dk}>0$, therefore $x_{3}(k)>2\pi$ for $k\in(0,1)$,
which gives the first inequality in (\ref{eq:x3bound}). The second
inequality in (\ref{eq:x3bound}) and existence of the asymptote $y+x=2$
follow from the equalities:
\begin{eqnarray*}
\lim_{k\rightarrow1}\frac{y_{3}(k)}{x_{3}(k)} & = & -1,\\
\lim_{k\rightarrow1}\left(y_{3}(x)+x_{3}(y)\right) & = & 2,\\
\left(y_{3}(x)+x_{3}(y)\right)-2 & = & \frac{4}{1+k}\left(E(k)-\frac{1+k}{2}\right)>0.
\end{eqnarray*}
Finally, convexity of the curve $\gamma_{3}$ follows since
\[
\frac{dy_{3}}{dx}=\frac{dy_{3}/dk}{dx_{3}/dk}=\alpha(k),
\]
where $\alpha(k)$ is given by (\ref{eq:alpha(k)}), which is increasing
by the proof of Lemma \ref{lem:g2}.  \hfill$\square$

A plot of the curve $\gamma_{3}$ with its bounds given by (\ref{eq:x3bound})
is shown in Fig \ref{fig:g3}.
\begin{figure}
\centering{}\includegraphics[scale=0.6]{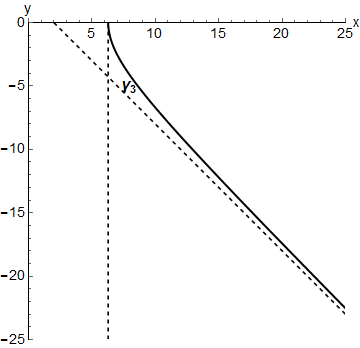}\protect\caption{\label{fig:g3}The curve $\gamma_{3}$ and its bounds $y+x=2,\quad x=2\pi$.}
\end{figure}
\end{proof}

\begin{lemma}
\label{lem:g23}For any $y\in(-\infty,0)$, we have $x_{2}(y)<x_{3}(y)$.\end{lemma}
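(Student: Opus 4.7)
The key observation is that the preceding two lemmas have already established tight enveloping bounds on $x_2(y)$ and $x_3(y)$ that leave a definite gap between them, so the inequality $x_2(y) < x_3(y)$ should follow with no further elliptic-function manipulation.

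The plan is to use the upper bound on $x_2(y)$ from inequality \eqref{eq:x2bound} in Lemma \ref{lem:g2}, namely $x_2(y) < -y$, together with the lower bound on $x_3(y)$ from inequality \eqref{eq:x3bound} in Lemma \ref{lem:g3}, namely $x_3(y) > 2 - y$. For any $y \in (-\infty, 0)$ we trivially have $-y < 2 - y$, so chaining the inequalities gives
\[
x_2(y) < -y < 2 - y < x_3(y),
\]
which is exactly the claim. Thus the proof reduces to invoking the two already-proved bounds and the elementary observation that $0 < 2$.

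There is no serious obstacle; in fact the only subtlety is making sure the enveloping lines $x = -y$ (upper envelope of $\gamma_2$ from $x_2(k)/y_2(k) = -k > -1$) and $x = 2 - y$ (lower envelope of $\gamma_3$ from $\lim_{k \to 1^-}(y_3 + x_3) = 2$ together with the positivity of $E(k) - (1+k)/2$) really do separate the two curves uniformly in $y$. Since the separation between these two lines is the constant $2 > 0$ independent of $y$, this is automatic. Consequently the curves $\gamma_2$ and $\gamma_3$ lie in disjoint strips of the quadrant $Q$, consistent with the stratification pictured in Figure \ref{fig:g1g5}.
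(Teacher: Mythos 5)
Your proposal is correct and is essentially identical to the paper's own proof, which chains the same two bounds $x_{2}(y)<-y$ from Lemma \ref{lem:g2} and $x_{3}(y)>2-y$ from Lemma \ref{lem:g3} into $x_{2}(y)<-y<2-y<x_{3}(y)$. No gaps.
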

\begin{proof}
It follows from Lemmas \ref{lem:g2} and \ref{lem:g3} that $x_{2}(y)<-y<2-y<x_{3}(y),\quad y\in(-\infty,0)$.
\hfill$\square$
\end{proof}

Lemmas \ref{lem:g1}--\ref{lem:g23} allow us to define the following
domains in the plane $Q\subset\mathbb{R}_{x,y}^{2}$:
\begin{eqnarray*}
m_{1} & = & \left\{ (x,y)\in\mathbb{R}^{2}\quad\mid\quad y<0,\quad0<x<x_{2}(y)\right\} ,\\
m_{2} & = & \left\{ (x,y)\in\mathbb{R}^{2}\quad\mid\quad y<0,\quad x_{2}(y)<x<x_{3}(y)\right\} ,\\
m_{3} & = & \left\{ (x,y)\in\mathbb{R}^{2}\quad\mid\quad y<0,\quad x_{3}(y)<x\right\} ,
\end{eqnarray*}
see Figure \ref{fig:g1g5}.
\begin{lemma}
\label{lem:m13}The domains $m_{1},m_{2},m_{3}\subset\mathbb{R}_{x,y}^{2}$
are open, connected and simply connected, with the following boundaries:
\begin{eqnarray*}
\partial m_{1} & = & \gamma_{1}\cup\gamma_{2}\cup\{O\},\\
\partial m_{2} & = & \gamma_{2}\cup\gamma_{3}\cup\gamma_{4}\cup\{O,P\},\\
\partial m_{3} & = & \gamma_{3}\cup\gamma_{5}\cup\{P\}.
\end{eqnarray*}
 Moreover, the quadrant $Q$ has the following decomposition into
disjoint subsets:
\[
Q=\left(\cup_{i=1}^{3}m_{i}\right)\cup\left(\cup_{i=1}^{5}\gamma_{i}\right)\cup\{O,P\}.
\]
\end{lemma}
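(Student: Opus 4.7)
The plan is to reduce everything to the already-established properties of the curves $\gamma_i$ (Lemmas \ref{lem:g1}--\ref{lem:g23}) and then extract the topology and boundary descriptions by inspection of the defining inequalities. The three steps are: (i) openness and simple connectedness of $m_i$; (ii) identification of $\partial m_i$; (iii) the disjoint decomposition of $Q$.

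For step (i), each $m_i$ is defined by strict inequalities $a(y) < x < b(y)$ on continuous (indeed smooth) functions $a,b$ of $y \in (-\infty,0)$, with $a<b$ everywhere (Lemma \ref{lem:g23} and Lemmas \ref{lem:g2}, \ref{lem:g3}). Hence each $m_i$ is open. For simple connectedness, I would exhibit an explicit homeomorphism onto a rectangle: for $m_1$ and $m_2$, the map $(x,y) \mapsto \bigl(y,\,(x-a(y))/(b(y)-a(y))\bigr)$ sends $m_i$ diffeomorphically onto $(-\infty,0)\times(0,1)$; for $m_3$, the map $(x,y) \mapsto (y,\,x-x_3(y))$ sends $m_3$ onto $(-\infty,0)\times(0,+\infty)$. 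Both target sets are convex, hence connected and simply connected.

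For step (ii), I analyze the limits of the defining inequalities. For $m_1$: the boundary points arise from $x\to 0^+$ (giving the negative $y$-axis, which by Lemma \ref{lem:g1} is exactly $\gamma_1$), from $x\to x_2(y)^-$ (giving $\gamma_2$), and from $y\to 0^-$ (by Lemma \ref{lem:g2}, $x_2(y)\to 0$, so only the point $O=(0,0)$ is approached). Combining these yields $\partial m_1 = \gamma_1\cup\gamma_2\cup\{O\}$. For $m_2$: the boundaries $x\to x_2(y)^+$ and $x\to x_3(y)^-$ give $\gamma_2$ and $\gamma_3$; the limit $y\to 0^-$ forces $x\in[\lim x_2,\lim x_3]=[0,2\pi]$ on the $x$-axis, which by Lemma \ref{lem:g4} (and the definitions of $O,P$) is exactly $\gamma_4\cup\{O,P\}$. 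For $m_3$: analogously, $x\to x_3(y)^+$ gives $\gamma_3$, and $y\to 0^-$ forces $x\in[2\pi,+\infty)$ on the $x$-axis, i.e.\ $\gamma_5\cup\{P\}$ by Lemma \ref{lem:g5}.

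For step (iii), I partition $Q$ according to $y$. If $y<0$, Lemma \ref{lem:g23} gives the total ordering $0<x_2(y)<x_3(y)$ on the horizontal line at height $y$; a point $(x,y)$ with $x\geq 0$ is then in exactly one of $\gamma_1$ (if $x=0$), $m_1$, $\gamma_2$, $m_2$, $\gamma_3$, $m_3$, according to the position of $x$. If $y=0$, Lemmas \ref{lem:g4} and \ref{lem:g5} together with the definitions of $O$ and $P$ partition the nonnegative $x$-axis as $\{O\}\sqcup\gamma_4\sqcup\{P\}\sqcup\gamma_5$. Taking the union of these two cases yields the claimed decomposition of $Q$ into disjoint pieces. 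I expect no serious obstacle: every nontrivial ingredient (monotonicity, limits, ordering of the curves) has already been established in Lemmas \ref{lem:a(k)}--\ref{lem:g23}, so the proof is essentially a bookkeeping argument.
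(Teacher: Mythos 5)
Your proposal is correct and follows essentially the same route as the paper, whose proof of this lemma is simply the one-line remark that it ``follows from the definition of the domains $m_i$ and from Lemmas \ref{lem:g1}--\ref{lem:g23}.'' You have merely made explicit the bookkeeping (rectangle homeomorphisms, limit analysis of the defining inequalities, and the ordering $0<x_2(y)<x_3(y)$ from Lemma \ref{lem:g23}) that the paper leaves implicit.
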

\begin{proof}
Follows from the definition of the domains $m_{i}$ and from Lemmas
\ref{lem:g1}--\ref{lem:g23}. \hfill$\square$
\end{proof}

Define the inverse images of the sets $m_{i},\gamma_{i},$ and $P$
via the projection $p$ (\ref{eq:P}):
\begin{eqnarray*}
M_{9}^{\prime}=p^{-1}(m_{1}), & \quad M_{35}^{\prime}=p^{-1}(m_{2}), & \quad M_{1}^{\prime}=p^{-1}(m_{3}),\\
M_{29}^{\prime}=p^{-1}(\gamma_{1}), & \quad M_{25}^{\prime}=p^{-1}(\gamma_{2}), & \quad M_{21}^{\prime}=p^{-1}(\gamma_{3}),\\
M_{39}^{\prime}=p^{-1}(\gamma_{4}), & \quad M_{17}^{\prime}=p^{-1}(\gamma_{5}), & \quad M_{33}^{\prime}=p^{-1}(P).
\end{eqnarray*}
Explicitly, these sets are defined in Table \ref{tab:M_j}.
\begin{table}
\centering{}%
\begin{tabular}{|c|c|c|c|}
\hline 
$j$ & $y$ & $x$ & $z$\tabularnewline
\hline 
\hline 
1 & $(-\infty,0)$ & $(x_{3}(y),+\infty)$ & 0\tabularnewline
\hline 
9 & $(-\infty,0)$ & $(0,x_{2}(y))$ & 0\tabularnewline
\hline 
17 & 0 & $(2\pi,+\infty)$ & 0\tabularnewline
\hline 
21 & $(-\infty,0)$ & $x_{3}(y)$ & 0\tabularnewline
\hline 
25 & $(-\infty,0)$ & $x_{2}(y)$ & 0\tabularnewline
\hline 
29 & $(-\infty,0)$ & 0 & 0\tabularnewline
\hline 
33 & 0 & $2\pi$ & 0\tabularnewline
\hline 
35 & $(-\infty,0)$ & $(x_{2}(y),x_{3}(y))$ & 0\tabularnewline
\hline 
39 & 0 & $(0,2\pi)$ & 0\tabularnewline
\hline 
\end{tabular}\protect\caption{\label{tab:M_j}Definition of $M_{j}^{\prime}\subset p^{-1}(Q).$}
\end{table}

Now we aim to prove that all the mappings $\mathrm{Exp}:N_{j}^{\prime}\rightarrow M_{j}^{\prime}$
are diffeomorphisms for the sets $N_{j}^{\prime}$ and $M_{j}^{\prime}$
defined by Tables \ref{tab:Nj}, \ref{tab:N35}, \ref{tab:N33_39},
\ref{tab:M_j}.
\begin{lemma}
\label{lem:Exp_curve}For any $j\in\left\{ 17,21,25,29,33,39\right\} $
the mapping $\mathrm{Exp}:N_{j}^{\prime}\rightarrow M_{j}^{\prime}$
is a diffeomorphism.\end{lemma}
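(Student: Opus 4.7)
My plan is to treat each of the six strata $N_j'$ separately and show that in every case $\mathrm{Exp}$ restricted to $N_j'$ is either the identity between two singletons (the case $j=33$) or coincides, in the natural parameter, with one of the parametrizations $\gamma_1,\dots,\gamma_5$ whose diffeomorphism properties are already recorded in Lemmas \ref{lem:g1}--\ref{lem:g5}. Inspecting Tables \ref{tab:Nj}, \ref{tab:N33_39}, each of $N_{17}', N_{21}', N_{25}', N_{29}'$ is a smooth curve parametrized by $k\in(0,1)$ (with $p=2K(k)$ and $\tau$ fixed at either $0$ or $K(k)$), $N_{39}'$ is parametrized by $t\in(0,2\pi)$, and $N_{33}'$ is a single point. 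The targets $M_j'$ are, by Table \ref{tab:M_j}, the same curves $\gamma_i\times\{0\}$ (or the point $(2\pi,0,0)$) in the plane $z=0$, so the domains and ranges match in dimension.

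First I would substitute the prescribed values of $(\lambda, p, \tau)$ (equivalently, of $\varphi=\tau-p$ and $\varphi_t=\tau+p$) into the integrated formulas \eqref{eq:2.13}--\eqref{eq:2.17} for $\mathrm{Exp}(\lambda,t)$. For $j\in\{17,21,29\}$ with $\lambda\in C_1$ one uses \eqref{eq:2.13} and the special Jacobi evaluations $\mathrm{sn}(2K)=0,\ \mathrm{cn}(2K)=-1,\ \mathrm{dn}(2K)=1,\ \mathrm{E}(2K)=2E(k)$ together with $\mathrm{sn}(K)=1,\ \mathrm{cn}(K)=0,\ \mathrm{dn}(K)=k'=\sqrt{1-k^2}$ and $\mathrm{E}(K)=E(k)$; for $j=25$ with $\lambda\in C_2$ one uses the analogous substitutions in \eqref{eq:2.14}. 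The routine but careful algebra should collapse each case to exactly the parametric formulas for the corresponding curve: $(x_5(k),0,0)$ for $N_{17}'$, $(x_3(k),y_3(k),0)$ for $N_{21}'$, $(x_2(k),y_2(k),0)$ for $N_{25}'$, $(0,y_1(k),0)$ for $N_{29}'$. For $j=39$, using \eqref{eq:2.16} with $\lambda\in C_4^0$ I get directly $(t,0,0)$, i.e.\ the map $t\mapsto x_4(t)$. For $j=33$, the same formula at $t=2\pi$ gives the single point $(2\pi,0,0)$.

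Once these identifications are in place the result is immediate: Lemmas \ref{lem:g5},\,\ref{lem:g3},\,\ref{lem:g2},\,\ref{lem:g1},\,\ref{lem:g4} state precisely that $k\mapsto x_5(k)$, $k\mapsto(x_3(k),y_3(k))$, $k\mapsto(x_2(k),y_2(k))$, $k\mapsto y_1(k)$, and $t\mapsto x_4(t)$ are diffeomorphisms from the respective parameter intervals onto the indicated curves, and composing with the (linear) inclusions of these curves into the plane $z=0$ preserves smoothness and bijectivity. The $N_{33}'\to M_{33}'$ case is a diffeomorphism of singletons.

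The only genuinely delicate step is the explicit reduction in the second paragraph: one must keep track of the sign factors $s_1,s_2$ and of the sign of $w=1/(\mathrm{dn}\,\varphi-k\,\mathrm{cn}\,\varphi)$ for each choice of $C_i^\pm$, and verify that the $z$-coordinate really vanishes (e.g.\ $z_t=s_1\ln\bigl[(\mathrm{dn}\,\varphi_t-k\,\mathrm{cn}\,\varphi_t)w\bigr]=0$ at $p=2K$, which uses $\mathrm{dn}(\varphi+2K)=\mathrm{dn}\,\varphi$ and $\mathrm{cn}(\varphi+2K)=-\mathrm{cn}\,\varphi$ combined with the definition of $w$ -- actually giving $\mathrm{dn}\,\varphi_t-k\,\mathrm{cn}\,\varphi_t=\mathrm{dn}\,\varphi+k\,\mathrm{cn}\,\varphi=1/w$ up to sign, whence the argument of the logarithm is $\pm 1$). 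I expect this verification, rather than the topological conclusion, to be the main technical chore.
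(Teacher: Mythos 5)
Your proposal is correct and follows essentially the same route as the paper, whose proof simply cites Lemmas \ref{lem:g1}--\ref{lem:g5} for $j=29,25,21,17,39$ respectively and declares $j=33$ obvious; you additionally spell out the identification $p\circ\mathrm{Exp}(N_j')=\gamma_i$, which the paper asserts earlier ``can be verified easily.'' One small slip in your parenthetical: since $p=t/2=2K$ the shift is $\varphi_t=\varphi+4K$, and by the $4K$-periodicity of $\mathrm{cn}$ and $2K$-periodicity of $\mathrm{dn}$ the argument of the logarithm is exactly $+1$ (not merely $\pm1$), so $z_t=0$ without any sign ambiguity.
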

\begin{proof}
Follows immediately from above lemmas:\end{proof}

\begin{itemize}
\item Lemma \ref{lem:g5} for $j=17$,
\item Lemma \ref{lem:g3} for $j=21$,
\item Lemma \ref{lem:g2} for $j=25$,
\item Lemma \ref{lem:g1} for $j=29$,
\item Lemma \ref{lem:g4} for $j=39$,
\item and it is obvious for $j=33$.\hfill$\square$
\end{itemize}
Now we consider the mappings of 2-dimensional domains.
\begin{lemma}
\label{lem:ExpN9}The mapping $\mathrm{Exp}:N_{9}^{\prime}\rightarrow M_{9}^{\prime}$
is a diffeomorphism.\end{lemma}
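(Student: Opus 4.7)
The plan is to verify conditions \textbf{P1}--\textbf{P4} for the restricted exponential mapping and then invoke Hadamard's global diffeomorphism theorem, exactly as in the proof of Theorem \ref{thm:Exp Di Diffeo}, but now for 2-dimensional manifolds instead of 3-dimensional ones.

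First I would check that $N_9'$ is a 2-dimensional, open, connected, simply connected manifold. By Table \ref{tab:Nj}, $N_9'$ corresponds to $\lambda \in C_2^+$ with $p = 2K(k)$ fixed (so $t = 2kp = 4kK(k) = t_1^{\mathrm{Max}}(\lambda)$) and with free parameters $(\tau, k) \in (0, K(k)) \times (0,1)$. This parameter set is a subgraph of the continuous function $k \mapsto K(k)$ on $(0,1)$ and hence is open, connected, and simply connected, and the change of variables from $(\gamma,c,t)$ to $(\tau,k)$ is smooth. By Lemma \ref{lem:m13} the region $m_1 \subset Q$ has the same topological properties, and these are inherited by $M_9' = p^{-1}(m_1)$ via the identification $(x,y,0) \leftrightarrow (x,y)$ induced by the projection (\ref{eq:P}).

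Next I would verify $\mathrm{Exp}(N_9') \subset M_9'$. Substituting $p = 2K(k)$ and using the half-period identities for the Jacobi elliptic functions in formulas (\ref{eq:2.14}) yields $z_t = 0$, so the image lies in the plane $z=0$. The curves $\gamma_1 = p \circ \mathrm{Exp}(N_{29}')$ and $\gamma_2 = p \circ \mathrm{Exp}(N_{25}')$ are the images of the two boundary components of $N_9'$ corresponding to $\tau = K(k)$ and $\tau = 0$ respectively, so by continuity the image of the interior $N_9'$ lies in the region $m_1$ bounded by $\gamma_1$ and $\gamma_2$. Non-degeneracy amounts to showing that the Jacobian of the reduced map $(\tau, k) \mapsto (x_t, y_t)$ is non-zero on $N_9'$; this would follow from direct differentiation of (\ref{eq:2.14}) at $p = 2K(k)$, producing an expression in $\mathrm{sn}\tau$, $\mathrm{cn}\tau$, $\mathrm{dn}\tau$, $E(k)$, $K(k)$ that does not vanish on the open parameter range.

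The main obstacle, by analogy with Proposition \ref{prop:Proper}, will be properness of $\mathrm{Exp}:N_9' \to M_9'$. I would mimic the boundary-analysis argument used there: characterize $\nu_n \to \partial N_9'$ by a subsequence satisfying one of $k \to 0^+$, $k \to 1^-$, $\tau \to 0^+$, or $\tau \to K(k)^-$, and then show in each case, using formulas (\ref{eq:2.14}) and the asymptotic estimates in Lemmas \ref{lem:a(k)}--\ref{lem:g2}, that $(x_{t_n}, y_{t_n})$ leaves every compact subset of $m_1$. The cases $\tau \to 0^+$ and $\tau \to K(k)^-$ correspond to the endpoint approaching the boundary curves $\gamma_2$ and $\gamma_1$ respectively; the case $k \to 0^+$ shrinks the endpoint to the origin $O$; and the case $k \to 1^-$ forces $K(k) \to +\infty$, so that the endpoint diverges along the asymptote $y+x+2 = 0$ identified in Lemma \ref{lem:g2}. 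Once properness is established, conditions \textbf{P1}--\textbf{P4} all hold and Hadamard's theorem yields the claim.
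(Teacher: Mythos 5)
Your overall scheme is the same as the paper's: pass to the coordinates $(\tau,k)$ (the paper uses $u=\mathrm{am}\,\tau$) in which $N_{9}^{\prime}$ becomes the open, connected, simply connected set $\{0<\tau<K(k),\ 0<k<1\}$, reduce $\mathrm{Exp}$ to an explicit planar map into the plane $z=0$, and then verify \textbf{P1}--\textbf{P4}. Your identification of the boundary components ($\tau=0\mapsto\gamma_{2}$, $\tau=K(k)\mapsto\gamma_{1}$), the non-degeneracy via a Jacobian computation, and the four-case properness analysis ($k\to0$, $k\to1$, $\tau\to0$, $\tau\to K(k)$) all match the paper; the only cosmetic slip there is that for $k\to1^{-}$ the image points simply escape to infinity (on a subsequence either $x\to+\infty$ or $y\to-\infty$) rather than ``diverging along the asymptote $y+x+2=0$,'' which does not affect the conclusion.

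The genuine gap is in the inclusion $\mathrm{Exp}(N_{9}^{\prime})\subset M_{9}^{\prime}$. Knowing that the two boundary components of the parameter rectangle map onto $\gamma_{2}$ and $\gamma_{1}$ does not, ``by continuity,'' force the image of the interior into the region $m_{1}$ that these curves bound: a smooth map, even a non-degenerate one, can fold or spill its image across the image of its boundary (globally, not locally), so a priori the interior could cross $\gamma_{2}$ into $m_{2}$ --- which would also destroy injectivity of the full stratified picture. The paper closes exactly this point with a real geometric argument: for each fixed $k$ the image curve $\Gamma:u\mapsto(x_{9},y_{9})$ is shown to be a concave graph whose endpoints lie on $\gamma_{2}$ and $\gamma_{1}$, and at the common endpoint on $\gamma_{2}$ its slope $-k$ is compared with the slope $\alpha(k)$ of $\gamma_{2}$ from (\ref{eq:alpha(k)}); since $\gamma_{2}$ is convex by Lemma \ref{lem:g2}, the concave curve $\Gamma$ stays strictly below $\gamma_{2}$ and hence inside $m_{1}$. (An alternative that would also work is the optimality/cut-time contradiction the paper uses for $N_{35}^{\prime}$ in Lemma \ref{lem:ExpN35} to exclude intersections with the other strata.) Some argument of this kind is indispensable; your proposal as written omits it.
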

\begin{proof}
In the coordinates $p=\frac{t}{2k}$ and $\tau=\left(\varphi+\frac{t}{2}\right)/k,$
the domain $N_{9}^{\prime}$ is given as follows:
\[
N_{9}^{\prime}:\lambda\in C_{2}^{+},\quad s_{2}=0,\quad p=2K(k),\quad\tau\in(0,K(k)),\quad k\in(0,1).
\]
Introduce further the coordinate $u=\mathrm{am}(\tau)$, then, 
\[
N_{9}^{\prime}:s_{2}=0,\quad p=2K(k),\quad u\in\left(0,\frac{\pi}{2}\right),\quad k\in(0,1).
\]
In these coordinates the exponential mapping $\mathrm{Exp}(\lambda,t)=(x,y,z)$
is given as follows:
\begin{eqnarray*}
x & = & x_{9}(u,k)=\frac{4ka(k)\cos(u)}{1-k^{2}},\\
y & = & y_{9}(u,k)=-\frac{4a(k)\sqrt{1-k^{2}\sin^{2}(u)}}{1-k^{2}},\\
z & = & 0.
\end{eqnarray*}
Consider the mapping:
\begin{eqnarray*}
f_{9}:D_{u,k} & \rightarrow & \mathbb{R}_{x,y}^{2},\quad(u,k)\mapsto(x_{9},y_{9}),\\
D_{u,k} & = & \left(0,\frac{\pi}{2}\right)_{u}\times(0,1)_{k}.
\end{eqnarray*}
We have to show that the mapping $f_{9}:D\rightarrow m_{1}$ is a diffeomorphism. \end{proof}
\begin{enumerate}
\item[(1)] First we show that $f_{9}(D)\subset m_{1}$. \\
We fix any $k\in(0,1)$ and show that the curve $\Gamma:u\rightarrow(x_{9},y_{9}),\quad u\in\left(0,\frac{\pi}{2}\right),$
is contained in $m_{1}$. Compute first the boundary points of $\Gamma$:
\begin{eqnarray*}
u & \rightarrow & 0\implies\Gamma(u)\rightarrow(x_{2}(k),y_{2}(k))\in\gamma_{2},\\
u & \rightarrow & \frac{\pi}{2}\implies\Gamma(u)\rightarrow(0,y_{2}(k))\in\gamma_{1}.
\end{eqnarray*}
Further, since
\begin{eqnarray*}
\frac{\partial x_{9}}{\partial u} & = & -\frac{4ka(k)}{1-k^{2}}\sin(u)<0,\\
\frac{\partial y_{9}}{\partial u} & = & \frac{4k^{2}a(k)}{1-k^{2}}\frac{\sin(u)\cos(u)}{\sqrt{1-k^{2}\sin^{2}(u)}}>0,
\end{eqnarray*}
then the curve $\Gamma$ is a graph of the smooth function $x\mapsto y_{9}(x)$.
Since
\[
\frac{dy_{9}}{dx}=\frac{\partial y_{9}/\partial u}{\partial x_{9}/\partial u}=-\frac{k\cos(u)}{\sqrt{1-k^{2}\sin^{2}(u)}},\quad\textrm{for }u\in\left(0,\frac{\pi}{2}\right),
\]
then the curve $\Gamma$ is concave. Moreover,
\[
\left.\frac{dy_{9}}{dx}\right|_{u=0}=-k>\alpha(k)=\frac{dy_{2}}{dx},
\]
where $\alpha(k)$ is given by (\ref{eq:alpha(k)}). Since the curve
$\gamma_{2}$ is convex, it follows that the curve $\Gamma$ lies
below the curve $\gamma_{2}$. Thus $\Gamma\subset m_{1}.$ Consequently,
$f_{9}(D)\subset m_{1}$.
\item[(2)] Since
\begin{equation}
\frac{\partial(x_{9},y_{9})}{\partial(u,k)}=\frac{16k^{2}E(k)a(k)\sin(u)}{\left(1-k^{2}\right)^{2}\sqrt{1-k^{2}\sin^{2}(u)}}>0,
\end{equation}
then the mapping $f_{9}:D\rightarrow m_{1}$ is non-degenerate.
\item[(3)] Finally we show that the mapping $f_{9}:D\rightarrow m_{1}$ is proper.
\\
It is obvious that a sequence $(u_{n},k_{n})\rightarrow\partial D$
iff it has a subsequence on which at least one of the conditions hold:
\begin{equation}
u\rightarrow0,\quad u\rightarrow\frac{\pi}{2},\quad k\rightarrow0,\quad k\rightarrow1.\label{eq:u0pi/2}
\end{equation}
On the other hand, a sequence $(x_{n},y_{n})\rightarrow\partial m_{1}$
iff it has a subsequence on which at least one of the conditions hold:
\begin{equation}
x\rightarrow0,\quad x\rightarrow+\infty,\quad y\rightarrow0,\quad y\rightarrow-\infty,\quad x_{2}(y)-x\rightarrow0.\label{eq:x0+infty}
\end{equation}
We show that in each of the cases (\ref{eq:u0pi/2}) we have one of
the cases (\ref{eq:x0+infty}). If $k\rightarrow0$, then $x_{9}\rightarrow0$
and $y_{9}\rightarrow0$. We can assume below that $k\rightarrow\bar{k}\in(0,1]$.
\\
Let $\bar{k}\in(0,1)$. If $u\rightarrow0$, then $\left(x_{9},y_{9}\right)\rightarrow\left(x_{2}(k),y_{2}(k)\right)\in\gamma_{2}$
thus $x_{2}(y)-x\rightarrow0$. If $u\rightarrow\frac{\pi}{2}$, then
$x_{9}\rightarrow0$. Let $\bar{k}=1$. If $u\rightarrow0$, then
$x_{9}\rightarrow\infty$. If $u\rightarrow\frac{\pi}{2}$, then $y_{9}\rightarrow\infty$.\\
We proved that the mapping $f_{9}:D\rightarrow m_{1}$ is proper. 
\item[(4)] The sets $D,\, m_{1}\subset\mathbb{R}^{2}$ are open, connected and
simply connected. \\
Thus $f_{9}:D\rightarrow m_{1}$ is a diffeomorphism, as well as $\mathrm{Exp}:N_{9}^{\prime}\rightarrow M_{9}^{\prime}$.\hfill$\square$
\end{enumerate}
\begin{lemma}
\label{lem:ExpN1}The mapping $\mathrm{Exp}:N_{1}^{\prime}\rightarrow M_{1}^{\prime}$
is a diffeomorphism.\end{lemma}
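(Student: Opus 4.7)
The plan is to follow exactly the template of Lemma \ref{lem:ExpN9}, now for the case $\lambda\in C_1^0$, $p=2K(k)$, $\tau\in(0,K(k))$, $k\in(0,1)$. Introducing the coordinate $u=\mathrm{am}(\tau)\in(0,\pi/2)$ and substituting $s_1=1$, $p=2K(k)$ into the parametrization \eqref{eq:2.13} of extremal trajectories for $\lambda\in C_1$, I will obtain explicit formulas
\[
x=x_1(u,k),\qquad y=y_1(u,k),\qquad z=0,
\]
with $(u,k)\in D_{u,k}=(0,\pi/2)\times(0,1)$. The simplification at $p=2K$ uses $\mathrm{sn}(2K)=0$, $\mathrm{cn}(2K)=-1$, $\mathrm{dn}(2K)=1$, $\mathrm{E}(2K)=2E(k)$, together with $w=\frac{1}{\mathrm{dn}\,\varphi-k\,\mathrm{cn}\,\varphi}$ evaluated at $\varphi=\tau-2K$, which reduces the $x$- and $y$-components to closed expressions in $u$ and $k$ analogous to the $x_9,y_9$ of Lemma \ref{lem:ExpN9}.

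Next I will verify the four properties \textbf{P1}--\textbf{P4} for the restricted planar mapping $f_1:D_{u,k}\to m_3,\ (u,k)\mapsto(x_1,y_1)$. For \emph{inclusion} $f_1(D_{u,k})\subset m_3$, I fix $k\in(0,1)$ and study the curve $\Gamma_k:u\mapsto(x_1,y_1)$; its boundary values at $u\to 0$ and $u\to\pi/2$ land on $\gamma_3$ and $\gamma_5$ respectively (consistent with $N_1'$ being bounded by $N_{21}'$ at $\tau=0$ and by $N_{17}'$ at $\tau=K$). Computing $\partial_u x_1$, $\partial_u y_1$ shows the curve is monotone, hence a graph $y=\widetilde y_1(x)$; one then checks concavity/convexity of this graph and compares its slope at $u=0$ with $\alpha(k)=dy_3/dx$ from \eqref{eq:alpha(k)}, so that convexity of $\gamma_3$ (Lemma \ref{lem:g3}) forces $\Gamma_k$ to lie strictly to the right of $\gamma_3$ and below $\gamma_5$, i.e. inside $m_3$. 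For \emph{non-degeneracy}, I will compute the Jacobian $\partial(x_1,y_1)/\partial(u,k)$ and show it has constant nonzero sign on $D_{u,k}$, using $a(k)>0$ and $E(k)>0$.

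For \emph{properness} I use the same test as in Lemma \ref{lem:ExpN9}: $(u_n,k_n)\to\partial D_{u,k}$ iff along a subsequence one of $u\to 0$, $u\to\pi/2$, $k\to 0$, $k\to 1$ holds; and $(x_n,y_n)\to\partial m_3$ iff along a subsequence one of $x\to+\infty$, $y\to-\infty$, $y\to 0$, or $x-x_3(y)\to 0$ holds. I then go through each of the four boundary regimes and show an appropriate one of the four target conditions: $u\to 0$ gives $(x_1,y_1)\to(x_3(k),y_3(k))\in\gamma_3$, i.e. $x-x_3(y)\to 0$; $u\to\pi/2$ gives $y_1\to 0$; $k\to 0$ uses $a(k)\to 0$ (Lemma \ref{lem:a(k)}) but with the $(1-k^2)^{-1}$ factors one needs to track asymptotics carefully, which is where the main technical care is required; $k\to 1$ forces $x_1\to+\infty$ (or $y_1\to-\infty$) via $a(k)\to 1$ and the $1/(1-k^2)$ blow-up. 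Openness, connectedness and simple connectedness of $D_{u,k}$ and $m_3$ are immediate from their product/monotone graph descriptions. By Hadamard's theorem $f_1$ is a diffeomorphism, and since the $(\lambda,t)\mapsto(u,k)$ change of coordinates is itself a diffeomorphism and $\mathrm{Exp}(N_1')\subset\{z=0\}$ with $p$ a diffeomorphism onto $m_3$, we conclude $\mathrm{Exp}:N_1'\to M_1'$ is a diffeomorphism.

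The main obstacle I expect is the comparison step showing $\Gamma_k\subset m_3$, specifically proving the slope/convexity inequality that keeps $\Gamma_k$ on the correct side of the convex curve $\gamma_3$ uniformly in $k$; this requires identifying $\left.dy_1/dx\right|_{u=0}$ and comparing it with $\alpha(k)$ from \eqref{eq:alpha(k)}, paralleling the $-k>\alpha(k)$ estimate used for $N_9'$. Everything else is a mechanical adaptation of Lemma \ref{lem:ExpN9}.
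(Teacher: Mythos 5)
Your overall architecture (passing to the coordinates $(u,k)=(\mathrm{am}(\tau),k)$ on $N_{1}^{\prime}$, reducing $\mathrm{Exp}$ to a planar map $f_{1}:D_{u,k}\rightarrow m_{3}$, and then checking \textbf{P1}--\textbf{P4} via non-degeneracy of the Jacobian, the sequential properness test, and the topology of $D_{u,k}$ and $m_{3}$) is exactly the paper's, and those parts of your plan would go through essentially verbatim. The genuine gap is the step you yourself flag as the main obstacle: the inclusion $f_{1}(D_{u,k})\subset m_{3}$. You propose to transplant the argument of Lemma \ref{lem:ExpN9}, i.e., compare the slope of the curve $\Gamma_{k}:u\mapsto(x_{1},y_{1})$ at its common endpoint with $\gamma_{3}$ against $\alpha(k)$ from (\ref{eq:alpha(k)}) and let convexity of $\gamma_{3}$ do the rest. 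That argument does not transfer. For $N_{9}^{\prime}$ it works because $\Gamma$ is \emph{concave} while $\gamma_{2}$ is convex and the slope inequality $-k>\alpha(k)$ holds at the common point, so the two tangent lines there separate the curves. Here one finds
\[
\frac{dy_{1}}{dx_{1}}=-\frac{\sqrt{1-k^{2}\sin^{2}u}}{k\cos u},
\]
so $\Gamma_{k}$ is \emph{convex} (its slope decreases to $-\infty$ as $u\rightarrow\pi/2$, i.e.\ as $x$ decreases), just like $\gamma_{3}$, and at the common point $u=0$ its slope is $-1/k$, which satisfies $-1/k<\alpha(k)<-k$. Two convex curves each lying above its own tangent line cannot be separated by comparing those tangents, so the single-point slope comparison only shows that $\Gamma_{k}$ enters $m_{3}$ \emph{locally} near $(x_{3}(k),y_{3}(k))$; it does not exclude that $\Gamma_{k}$ later crosses back over $\gamma_{3}$.

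This is exactly the point where the paper abandons the $N_{9}^{\prime}$ template: it describes $\gamma_{3}$ implicitly as $x=4E(-y/x)\big/\bigl(1-y^{2}/x^{2}\bigr)$ in $\mathbb{R}_{+-}^{2}$ and proves the global inequality $\varphi_{2}(u,k)=E(k)-E(\bar{k})\sqrt{1-k^{2}\sin^{2}u}>0$ with $\bar{k}=k\cos(u)/\sqrt{1-k^{2}\sin^{2}u}$, by noting $\varphi_{2}(0,k)=0$ and reducing the sign of $\partial\varphi_{2}/\partial u$ to the positivity of $a(\bar{k})$ from Lemma \ref{lem:a(k)}. You need an argument of this kind (or some other genuinely global comparison of $\Gamma_{k}$ with $\gamma_{3}$) to close the inclusion step; as written, your plan stalls there. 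A further minor inaccuracy: in the $k\rightarrow0$ regime of the properness check the formulas involve $E(k)$, not $a(k)$, and the image simply tends to the corner point $P=(2\pi,0)$, so the relevant boundary condition is $y\rightarrow0$ and no delicate asymptotics are needed.
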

\begin{proof}
In the coordinates $p=\frac{t}{2}$ and $\tau=\varphi+\frac{t}{2},$
the domain $N_{1}^{\prime}$ is given as follows:
\[
N_{1}^{\prime}:\lambda\in C_{1}^{0},\quad s_{1}=0,\quad p=2K(k),\quad\tau\in(0,K(k)),\quad k\in(0,1).
\]
Introduce further the coordinate $u=\mathrm{am}(\tau)$, then 
\[
N_{1}^{\prime}:s_{1}=0,\quad p=2K(k),\quad u\in\left(0,\frac{\pi}{2}\right),\quad k\in(0,1).
\]
In these coordinates the exponential mapping $\mathrm{Exp}(\lambda,t)=(x,y,z)$
is given as follows:
\begin{eqnarray*}
x & = & x_{1}(u,k)=\frac{4E(k)\sqrt{1-k^{2}\sin^{2}(u)}}{1-k^{2}},\\
y & = & y_{1}(u,k)=-\frac{4k\, E(k)\cos(u)}{1-k^{2}},\\
z & = & 0.
\end{eqnarray*}
Consider the mapping:
\begin{eqnarray*}
f_{1}:D_{u,k} & \rightarrow & \mathbb{R}_{x,y}^{2},\quad(u,k)\mapsto(x_{1},y_{1}),\\
D_{u,k} & = & \left(0,\frac{\pi}{2}\right)_{u}\times(0,1)_{k}.
\end{eqnarray*}
We have to show that the mapping $f_{1}:D\rightarrow m_{3}$ is a
diffeomorphism. \end{proof}

\begin{enumerate}
\item[(1)] First we show that $f_{1}(D)\subset m_{3}$. \\
If $(u,k)\in D$, then $x_{1}(u,k)>0,\quad y_{1}(u,k)<0$, thus $f_{1}(D)\subset\mathbb{R}_{+-}^{2}=\left\{ (x,y)\in\mathbb{R}^{2}\quad\mid\quad x>0,\quad y<0\right\} $.
The boundary of the domain $m_{3}$ in $\mathbb{R}_{+-}^{2}$ is the
curve $\gamma_{3}$ and along this curve we have $\frac{y_{4}(k)}{x_{4}(k)}=-k$.
Thus
\[
\gamma_{3}=\left\{ (x,y)\in\mathbb{R}_{+-}^{2}\quad\mid\quad x=\frac{4E\left(-\frac{y}{x}\right)}{1-\frac{y^{2}}{x^{2}}}\right\} ,
\]
so
\[
m_{3}=\left\{ (x,y)\in\mathbb{R}_{+-}^{2}\quad\mid\quad x>\frac{4E\left(-\frac{y}{x}\right)}{1-\frac{y^{2}}{x^{2}}}\right\} .
\]
Consider the function 
\[
\varphi_{1}(u,k)=\left.x-\frac{4E\left(-\frac{y}{x}\right)}{1-\frac{y^{2}}{x^{2}}}\right|_{x=x_{1}(u,k),\, y=y_{1}(u,k)}.
\]
We have to show that $\varphi_{1}(u,k)>0$ for $(u,k)\in D$. Since
\begin{eqnarray*}
\varphi_{1}(u,k) & = & \frac{4E(k)\sqrt{1-k^{2}\sin^{2}(u)}}{1-k^{2}}-\frac{4E(\bar{k})}{1-\frac{k^{2}\cos^{2}u}{1-k^{2}\sin^{2}u}}\\
 & = & \frac{4\sqrt{1-k^{2}\sin^{2}(u)}}{1-k^{2}}\left(E(k)-E(\bar{k})\sqrt{1-k^{2}\sin^{2}(u)}\right),
\end{eqnarray*}
where $\bar{k}=\frac{k\cos(u)}{\sqrt{1-k^{2}\sin^{2}u}}$, we have
to show that
\[
\varphi_{2}(u,k)=E(k)-E(\bar{k})\sqrt{1-k^{2}\sin^{2}(u)}>0,\quad(u,k)\in D.
\]
Since $\varphi_{2}(0,k)=0$ and
\[
\frac{\partial\varphi_{2}}{\partial u}=\frac{\tan(u)}{\sqrt{1-k^{2}\sin^{2}(u)}}\varphi_{3}(u,k),
\]
where $\varphi_{3}(u,k)=\left(1-k^{2}\sin^{2}(u)\right)E(\bar{k})-\left(1-k^{2}\right)K(\bar{k})$,
it is sufficient to show that $\varphi_{3}(u,k)>0$ for all $(u,k)\in D$.
By Lemma \ref{lem:a(k)}, we have
\[
a(k)=E(k)-\left(1-k^{2}\right)K(k)>0,\quad k\in(0,1),
\]
thus
\begin{eqnarray*}
a(\bar{k}) & = & E(\bar{k})-\left(1-\bar{k}^{2}\right)K(\bar{k})\\
 & = & \frac{\left(1-k^{2}\sin^{2}(u)\right)E(\bar{k})-\left(1-k^{2}\right)K(\bar{k})}{1-k^{2}\sin^{2}(u)}>0.
\end{eqnarray*}
That is, $\varphi_{3}(u,k)>0,\quad\forall(u,k)\in D$. Thus it follows
that $f_{1}(D)\subset m_{3}$, i.e., $\mathrm{Exp}(N_{1}^{\prime})\subset M_{1}^{\prime}$. 
\item[(2)] Since
\[
\frac{\partial(x_{1},y_{1})}{\partial(u,k)}=-\frac{16E(k)\, a(k)\sin(u)}{\left(1-k^{2}\right)^{2}\sqrt{1-k^{2}\sin^{2}(u)}}<0,
\]
then the mapping $f_{1}:D\rightarrow m_{3}$ is non-degenerate.
\item[(3)] Finally we show that the mapping $f_{1}:D\rightarrow m_{3}$ is proper.
\\
In order to show that the mapping $f_{1}:D\rightarrow m_{3}$ is proper,
we show that if a sequence $(u_{n},k_{n})\in D$ satisfies one of
the conditions: 
\[
u\rightarrow0,\quad u\rightarrow\frac{\pi}{2},\quad k\rightarrow0,\quad k\rightarrow1,
\]
then its image $(x_{n},y_{n})=f_{1}(u_{n},k_{n})$ satisfies one of
the conditions: 
\[
x\rightarrow0,\quad x\rightarrow+\infty,\quad y\rightarrow0,\quad y\rightarrow\infty,\quad x_{3}(y)-x\rightarrow0.
\]
 We can assume that $k\rightarrow\bar{k}\in(0,1],\quad u\in\bar{u}\in[0,\frac{\pi}{2}]$.
If $\bar{k}=0$, then $y_{1}\rightarrow0$. \\
Let $\bar{k}\in(0,1)$. If $\bar{u}\rightarrow0$, then $\left(x_{1},y_{1}\right)\rightarrow\left(x_{3}(k),y_{3}(k)\right)\in\gamma_{3}$,
thus $x_{3}(y)-x\rightarrow0$. If $\bar{u}=\frac{\pi}{2}$, then
$y_{1}\rightarrow0$. Let $\bar{k}=1$. If $\bar{u}\in[0,\frac{\pi}{2})$,
then $x_{1}\rightarrow\infty$, $y_{1}\rightarrow\infty$. Let $\bar{u}=\frac{\pi}{2}$,
then 
\begin{eqnarray*}
y_{1} & \sim & -\frac{4\cos(u)}{1-k^{2}},\\
x_{1} & \sim & 4\sqrt{\frac{1}{1-k^{2}}+k^{2}\left(\frac{\cos(u)}{1-k^{2}}\right)^{2}}.
\end{eqnarray*}
We can assume that $\frac{\cos(u)}{1-k^{2}}\rightarrow d\in[0,+\infty)$.
If $d\in[0,+\infty)$, then $x_{1}\rightarrow+\infty$, and if $d=+\infty$,
then $y_{1}\rightarrow\infty$. \\
We proved that the mapping $f_{1}:D\rightarrow m_{3}$ is proper.
\item[(4)] The sets $D,\, m_{3}\subset\mathbb{R}^{2}$ are open, connected and
simply connected. 
\end{enumerate}
Thus $f_{1}:D\rightarrow m_{3}$ is a diffeomorphism, as well as the
mapping $\mathrm{Exp}:N_{1}^{\prime}\rightarrow M_{1}^{\prime}$.\hfill$\square$

\begin{lemma}
\label{lem:ExpN35}The mapping $\mathrm{Exp}:N_{35}^{\prime}\rightarrow M_{35}^{\prime}$
is a diffeomorphism.\end{lemma}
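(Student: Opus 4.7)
The plan is to follow the same four-property template (\textbf{P1}--\textbf{P4}) used in Lemmas \ref{lem:ExpN9} and \ref{lem:ExpN1} and then apply Hadamard's theorem. The new wrinkle is that $N_{35}^{\prime}$ comprises three pieces coming from $C_1^0$, $C_3^{0+}$, $C_2^+$, which must be assembled into a single open, connected, simply connected two-dimensional manifold. A natural global parametrization uses the pendulum energy $E = c^2/2 - \cos\gamma \in (-1,+\infty)$ together with time $t \in (0,\mathbf{t}(\lambda))$: $E \in (-1,1)$ corresponds to $C_1^0$, $E = 1$ to $C_3^{0+}$, and $E > 1$ to $C_2^+$. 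In this chart $N_{35}^{\prime}$ becomes the subgraph $D = \{(E,t) \mid E > -1,\ 0 < t < \mathbf{t}(E)\}$, which by continuity of $\mathbf{t}$ (Proposition \ref{prop:tt-reg}) is open, connected and simply connected; Lemma \ref{lem:m13} gives the same properties for $m_2$ and hence for $M_{35}^{\prime} = p^{-1}(m_2)$.

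Next I would write $\mathrm{Exp}|_{N_{35}^{\prime}}$ explicitly by specializing (\ref{eq:2.13})--(\ref{eq:2.15}) to $\tau = 0$, so that $\varphi = -p$, $\varphi_t = p$ (and $\psi = -p$, $\psi_t = p$ on $C_2^+$). The factor $z_t = 0$ is immediate, and the oddness of $\mathrm{E}(\cdot,k)$ and $\mathrm{sn}(\cdot,k)$ yields $\mathrm{E}(\varphi_t) - \mathrm{E}(\varphi) = 2\mathrm{E}(p)$, $\mathrm{sn}\,\varphi_t - \mathrm{sn}\,\varphi = 2\mathrm{sn}\,p$. I would then verify $\mathrm{Exp}(N_{35}^{\prime}) \subset M_{35}^{\prime}$ by showing that for each fixed $E$ the smooth curve $t \mapsto (x(t),y(t))$ starts at the origin and lies strictly between $\gamma_2$ (parametrized by $N_{25}^{\prime}$) and $\gamma_3$ (parametrized by $N_{21}^{\prime}$), using the convexity and asymptotic bounds in Lemmas \ref{lem:g2} and \ref{lem:g3}. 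Non-degeneracy then follows by direct Jacobian computation on each stratum, essentially recycling the calculations from Lemmas \ref{lem:ExpN9} and \ref{lem:ExpN1} on the $C_1^0$ and $C_2^+$ pieces and using the hyperbolic form of the formulas on the $C_3^{0+}$ curve. For properness I would exhaust the boundary of $D$ (namely $E \to -1^+$, $E \to +\infty$, $t \to 0^+$, $t \to \mathbf{t}(E)^-$) and show that in each case $(x(t),y(t)) \to \partial m_2$, i.e., to the curves $\gamma_2$ or $\gamma_3$, the $x$-axis, or infinity.

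The main obstacle will be verifying smooth assembly of the three strata across $E = 1$. The elliptic-coordinate ingredients $w = (\mathrm{dn}\,p - k\,\mathrm{cn}\,p)^{-1}$ and the bracketed coefficients in (\ref{eq:2.13}) and (\ref{eq:2.14}) are individually singular as $k \to 1$, even though $\mathrm{Exp}(\lambda,t)$ is smooth in $(\lambda,t)$ globally. The key expansion is $\mathrm{dn}\,p - k\,\mathrm{cn}\,p \sim \tfrac{1}{2}(1-k^2)\cosh p$ as $k \to 1^-$, so that $w$ diverges like $2/[(1-k^2)\cosh p]$ while the companion coefficient $1/[w(1-k^2)]$ stays finite at $\cosh p / 2$. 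Combined with $\mathrm{E}(p,k),\ \mathrm{sn}(p,k) \to \tanh p$ and their $O(1-k^2)$ corrections, the bracketed combinations in (\ref{eq:2.13}) and (\ref{eq:2.14}) must be shown to collapse onto the hyperbolic formula (\ref{eq:2.15}) with $w = \cosh p$. Once this smooth matching and the non-vanishing of the Jacobian across $E = 1$ are verified, conditions \textbf{P1}--\textbf{P4} hold and Hadamard's theorem yields the desired diffeomorphism.
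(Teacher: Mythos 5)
Your overall template (\textbf{P1}--\textbf{P4} plus Hadamard) and your treatment of the topology of $N_{35}^{\prime}$ are sound and essentially match the paper: the paper also straightens $N_{35}^{\prime}$ by the flow of $\overrightarrow{H}_{\nu}$ onto $\left\{\gamma=0,\ c>0,\ t<\mathbf{t}(\lambda)\right\}$, which is your $(E,t)$ subgraph. The genuine gap is at the central step, the inclusion $\mathrm{Exp}(N_{35}^{\prime})\subset M_{35}^{\prime}$. You propose to show that for fixed $E$ the image curve lies strictly between $\gamma_{2}$ and $\gamma_{3}$ ``using the convexity and asymptotic bounds in Lemmas \ref{lem:g2} and \ref{lem:g3},'' but those lemmas only describe the boundary curves; they say nothing about where the image of $N_{35}^{\prime}$ goes, and no comparison estimate between $(x_{35},y_{35})$ and $(x_{2},y_{2})$, $(x_{3},y_{3})$ is sketched. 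This is the hardest part of the lemma, and the paper handles it with an idea your proposal is missing: it first proves only the coarse inclusion into $\left\{z=0,\ x>0,\ y<0\right\}=M_{1}^{\prime}\sqcup M_{9}^{\prime}\sqcup M_{21}^{\prime}\sqcup M_{25}^{\prime}\sqcup M_{35}^{\prime}$ (via the elementary double inequality $\alpha_{1}<\alpha<\alpha_{2}$), and then rules out the other four strata by an optimality contradiction -- if $\mathrm{Exp}(\lambda_{35},t_{35})=\mathrm{Exp}(\lambda_{1},t_{1})$ with the two parameters in different strata, one of the two geodesics could not be a minimizer up to (slightly past) its endpoint, contradicting $t<t_{\mathrm{cut}}$. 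Without this argument, or an explicit substitute computation, your proof of the inclusion does not go through.

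Two further points where the paper's route is materially simpler and yours risks trouble. For non-degeneracy, ``recycling'' the Jacobians of Lemmas \ref{lem:ExpN9} and \ref{lem:ExpN1} does not work directly: there the free variables are $(\tau,k)$ with $p=2K$ fixed, whereas on $N_{35}^{\prime}$ it is $\tau=0$ that is fixed and $(p,k)$ that vary, so the determinants are different; moreover on the one-dimensional $C_{3}^{0+}$ stratum you cannot get a $2\times2$ Jacobian from the hyperbolic formulas alone -- you need a derivative transverse to $k=1$. The paper avoids all of this by noting that $t<t_{\mathrm{cut}}(\lambda)\leq t_{1}^{\mathrm{conj}}(\lambda)$ makes the full $3\times3$ Jacobian nonzero, and since $z\equiv0$ on the image it factors as $\frac{\partial(x,y)}{\partial(p,k)}\cdot z_{\tau}$, forcing $\frac{\partial(x,y)}{\partial(p,k)}\neq0$ at every point of $N_{35}^{\prime}$, uniformly across $E=1$. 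Relatedly, the ``smooth assembly across $E=1$'' that you flag as the main obstacle is an artifact of insisting on the elliptic-coordinate formulas: $N_{35}^{\prime}$ is a smooth $2$-dimensional submanifold of $N$ (the preimage of a ray under a smooth flow) and $\mathrm{Exp}$ is smooth on all of $N$, so no matching computation is needed if you argue invariantly as the paper does.
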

\begin{proof}
It follows from Tables \ref{tab:N35}, \ref{tab:M_j} that
\begin{eqnarray*}
N_{35}^{\prime} & = & \left\{ (\lambda,t)\in N\quad\mid\quad\gamma_{\frac{t}{2}}=0,\quad c_{\frac{t}{2}}>0,\quad t\in(0,\mathbf{t}(\lambda))\right\} ,\\
M_{35}^{\prime} & = & \left\{ q\in M\quad\mid\quad z=0,\quad y<0,\quad x_{2}(y)<x<x_{3}(y)\right\} .
\end{eqnarray*}
Further we have an obvious decomposition
\begin{eqnarray*}
N_{35}^{\prime} & = & N_{35,1}^{\prime}\sqcup N_{35,2}^{\prime}\sqcup N_{35,3}^{\prime},\\
N_{35,j}^{\prime} & = & N_{35}^{\prime}\cap N_{j},\quad j=1,2,3.
\end{eqnarray*}
\end{proof}

\begin{enumerate}
\item[(1)] We show first that $\mathrm{Exp}(N_{35}^{\prime})\subset M_{35}^{\prime}$.
\\
Consider the set $N_{35,2}^{\prime}$. In the coordinates $p=\frac{t}{2k}$
and $\tau=\left(\varphi+\frac{t}{2}\right)/k,$ the domain $N_{35,2}^{\prime}$
is given as follows:
\[
N_{35,2}^{\prime}:\lambda\in C_{2}^{+},\quad s_{2}=1,\quad p=(0,2K(k)),\quad\tau=0,\quad k\in(0,1).
\]
Introduce further the coordinate $u=\mathrm{am}(p)$, then 
\[
N_{35,2}^{\prime}:\lambda\in C_{2}^{+},\quad s_{2}=1,\quad u=(0,2\pi),\quad\tau=0,\quad k\in(0,1).
\]
In these coordinates the exponential mapping $\mathrm{Exp}(\lambda,t)=(x,y,z),\quad(\lambda,t)\in N_{35,2}^{\prime}$
is given as follows:
\begin{eqnarray*}
x & = & x_{35}(u,k)=\frac{2k}{1-k^{2}}\left[\sin(u)\sqrt{1-k^{2}\sin^{2}(u)}-\cos(u)\,\alpha(u,k)\right],\\
y & = & y_{35}(u,k)=-\frac{2}{1-k^{2}}\left[\sqrt{1-k^{2}\sin^{2}(u)}\,\alpha(u,k)-k^{2}\sin(u)\cos(u)\right],\\
z & = & 0,
\end{eqnarray*}
where $\alpha(u,k)=E(u,k)-\left(1-k^{2}\right)F(u,k).$ Thus $\mathrm{Exp}(N_{35,2}^{\prime})\subset\left\{ q\in M\quad\mid\quad z=0\right\} $.
Now we show that $x_{35}(u,k)>0,\quad y_{35}(u,k)<0$ for $(u,k)\in(0,\frac{\pi}{2})\times(0,1)$.
We have to prove the double inequality
\begin{eqnarray*}
\alpha_{1}(u,k) & < & \alpha(u,k)<\alpha_{2}(u,k),\quad(u,k)\in(0,\frac{\pi}{2})\times(0,1),\\
\alpha_{1}(u,k) & = & \frac{k^{2}\sin(u)\cos(u)}{\sqrt{1-k^{2}\sin^{2}(u)}},\\
\alpha_{2}(u,k) & = & \frac{\sin(u)\sqrt{1-k^{2}\sin^{2}(u)}}{\cos(u)}.
\end{eqnarray*}
This double inequality follows since
\begin{eqnarray*}
\alpha_{1}(0,k) & = & \alpha(0,k)=\alpha_{2}(0,k)=0,\\
\frac{\partial}{\partial u}\left(\alpha(u,k)-\alpha_{1}(u,k)\right) & = & \left(1-k^{2}\right)\sin^{2}(u)>0,\\
\frac{\partial}{\partial u}\left(\alpha_{2}(u,k)-\alpha(u,k)\right) & = & 1-k^{2}>0.
\end{eqnarray*}
Thus $x_{35}(u,k)>0,\quad y_{35}(u,k)<0$ for $(u,k)\in\left(0,\frac{\pi}{2}\right)\times(0,1)$.
If $u\in[\frac{\pi}{2},\pi),\quad k\in(0,1)$, then $\sin(u)>0,\quad\cos(u)\leq0,\quad\alpha(u,k)>0$,
thus $x_{35}(u,k)>0,\quad y_{35}(u,k)<0$. We proved that \\
$\mathrm{Exp}(N_{35,2}^{\prime})\subset\left\{ q\in M\quad\mid\quad z=0,\quad x>0,\quad y<0\right\} $.
The sets $N_{35,1}^{\prime}$ and $N_{35,3}^{\prime}$ are considered
similarly. Thus it follows that
\[
\mathrm{Exp}(N_{35}^{\prime})\subset\mathbb{R}_{+-}^{2}:=\left\{ q\in M\quad\mid\quad z=0,\quad x>0,\quad y<0\right\} .
\]
We now show that $\mathrm{Exp}(N_{35}^{\prime})\subset M_{35}^{\prime}$.
Notice the decomposition
\[
\mathbb{R}_{+-}^{2}=M_{1}^{\prime}\sqcup M_{9}^{\prime}\sqcup M_{21}^{\prime}\sqcup M_{25}^{\prime}\sqcup M_{35}^{\prime}.
\]
By contradiction, let $\mathrm{Exp}(N_{35}^{\prime})\not\subset M_{35}^{\prime}$,
then $\mathrm{Exp}(N_{35}^{\prime})\cap\left(M_{1}^{\prime}\sqcup M_{9}^{\prime}\sqcup M_{21}^{\prime}\sqcup M_{25}^{\prime}\right)\ni q$.
Let $q\in\mathrm{Exp}(N_{35}^{\prime})\cap M_{1}^{\prime}$ (the cases
of intersection with $M_{9}^{\prime},M_{21}^{\prime},M_{25}^{\prime}$
are considered similarly). Then there exist $\left(\lambda_{35},t_{35}\right)\in N_{35}^{\prime}$,
$\left(\lambda_{1},t_{1}\right)\in N_{1}^{\prime}$ such that $q=\mathrm{Exp}\left(\lambda_{35},t_{35}\right)=\mathrm{Exp}\left(\lambda_{1},t_{1}\right)$.
Notice that
\begin{eqnarray}
\left(\lambda_{35},t_{35}\right) & \in & N_{35}^{\prime}\implies t_{35}<t_{\mathrm{cut}}\left(\lambda_{35}\right),\label{eq:lam35}\\
\left(\lambda_{1},t_{1}\right) & \in & N_{1}^{\prime}\implies t_{1}<t_{\mathrm{cut}}\left(\lambda_{1}\right).\label{eq:lam1}
\end{eqnarray}
If $t_{35}<t_{1}$, then the trajectory $\mathrm{Exp}\left(\lambda_{1},t\right),\quad t\in[0,t_{1}],$
is not optimal which contradicts to (\ref{eq:lam1}) . If $t_{35}\geq t_{1}$,
then the trajectory $\mathrm{Exp}(\lambda_{35},t),\quad t\in[0,t_{35}+\varepsilon]$
is not optimal for small $\varepsilon>0$ which contradicts to (\ref{eq:lam35}).
Thus $\mathrm{Exp}(N_{35}^{\prime})\cap M_{1}^{\prime}=\emptyset$.
Then it follows that $\mathrm{Exp}(N_{35}^{\prime})\subset M_{35}^{\prime}$. 
\item[(2)] We now prove that $\mathrm{Exp}:N_{35}^{\prime}\rightarrow M_{35}^{\prime}$
is non-degenerate.\\
Let $\nu=(\lambda,t)\in N_{35,2}^{\prime}$. In the coordinates $(p,\tau,k)$
on $N_{35,2}^{\prime}$, we have $p\in(0,2K(k)),\quad\tau=0,\quad k\in(0,1)$.
Since $t<4K(k)=t_{\mathrm{cut}}(\lambda)\leq t_{1}^{\mathrm{conj}}(\lambda)$,
therefore the Jacobian $\frac{\partial q}{\partial\nu}(\nu)\neq0$.
We have
\[
\frac{\partial q}{\partial\nu}=\frac{\partial(x,y,z)}{\partial(p,\tau,k)}=\left|\begin{array}{ccc}
x_{p} & x_{\tau} & x_{k}\\
y_{p} & y_{\tau} & y_{k}\\
z_{p} & z_{\tau} & z_{k}
\end{array}\right|.
\]
Since $\mathrm{Exp}\left(N_{i,2}^{\prime}\right)\subset\left\{ q\in M\quad\mid\quad z=0\right\} ,$
then $z_{p}(\nu)=z_{k}(\nu)=0$, thus
\[
\frac{\partial q}{\partial\nu}(\nu)=\frac{\partial(x,y)}{\partial(p,k)}(\nu)\, z_{\tau}(\nu)\neq0,
\]
so $\frac{\partial(x,y)}{\partial(p,k)}(\nu)\neq0$. Since $\nu\in N_{35,2}^{\prime}$
is arbitrary, then $\left.\mathrm{Exp}\right|_{N_{35,2}^{\prime}}$
is non-degenerate. Similarly it follows that $\mathrm{Exp}$ is non-degenerate
at any point $\nu\in N_{35,1}^{\prime}\cup N_{35,3}^{\prime}$. 
\item[(3)] The mapping $\mathrm{Exp}:N_{35}^{\prime}\rightarrow M_{35}^{\prime}$
is proper. This follows similarly to the proof of properness of $\mathrm{Exp}:D_{1}\rightarrow M_{1}$.
\item[(4)] It is obvious that $M_{35}^{\prime}$ is a connected, simply connected
2-dimensional manifold. In order to prove the same property for $N_{35}^{\prime}$,
consider the vector field
\[
\overrightarrow{P}=c\frac{\partial}{\partial\gamma}-\sin\gamma\frac{\partial}{\partial c}\in\mathrm{Vec}(N).
\]
Since
\[
e^{t/2\overrightarrow{P}}\left(N_{35}^{\prime}\right)=\left\{ (\lambda,t)\in N\quad\mid\quad\gamma=0,\quad c>0,\quad t<\mathbf{t}(\lambda)\right\} 
\]
is a connected, simply connected 2-dimensional manifold, the same
properties hold for the set $N_{35}^{\prime}$. \\
Then it follows that $\mathrm{Exp}:N_{35}^{\prime}\rightarrow M_{35}^{\prime}$
is a diffeomorphism. \hfill$\square$
\end{enumerate}

\subsection{Stratification of the set $M^{\prime}$}

Define subsets $M_{j}^{\prime}\subset M^{\prime},\quad j=1,\ldots,40,$
as follows:
\begin{itemize}
\item For $j\in\left\{ 1,9,17,21,25,29,33,35,39\right\} ,$ the sets $M_{j}$
are given by Table \ref{tab:M_j},
\item For the rest $j$ the sets $M_{j}^{\prime}$ are given by equalities
(\ref{eq:epsiMj})--(\ref{eq:eps4Mj}): 
\begin{eqnarray}
\varepsilon^{i}\left(M_{j}^{\prime}\right) & = & M_{j+i}^{\prime},\quad i=1,\ldots,7,\quad j=1,9,\label{eq:epsiMj}\\
\varepsilon^{2i}\left(M_{17}^{\prime}\right) & = & M_{17+i}^{\prime},\quad i=1,2,3,\label{eq:eps2iMj}\\
\varepsilon^{2+i}\left(M_{j}^{\prime}\right) & = & M_{j+i}^{\prime},\quad i=1,2,3,\quad j=21,25,29,35,\label{eq:eps2+iMj}\\
\varepsilon^{4}\left(M_{j}^{\prime}\right) & = & M_{j+1}^{\prime},\quad j=33,39.\label{eq:eps4Mj}
\end{eqnarray}
\end{itemize}
\begin{lemma}
A stratification of $M^{\prime}$ is given as:
\begin{equation}
M^{\prime}=\sqcup_{j=1}^{40}M_{j}^{\prime}.\label{eq:M'Decomposn}
\end{equation}
\end{lemma}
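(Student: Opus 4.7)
The plan is to deduce the stratification of $M'$ from the stratification of the fourth quadrant $Q$ already established in Lemma \ref{lem:m13}, by applying the reflections $\varepsilon^i \in G$ restricted to the plane $z=0$. I first observe that by definition $M' = \{q \in M : z = 0,\ x^2 + y^2 \neq 0\}$, so $M'$ is the punctured plane $\{z = 0\} \setminus \{q_0\}$. Evaluating the reflection formulas \eqref{eq:symm_M} at $z=0$ shows that on this plane the group $G$ reduces to four elements acting as $(x,y) \mapsto (\pm x, \pm y)$, namely: $\mathrm{Id}$ and $\varepsilon^2$ fix $(x,y)$, $\varepsilon^1$ and $\varepsilon^3$ reflect across the $x$-axis, $\varepsilon^4$ and $\varepsilon^6$ reflect across the $y$-axis, and $\varepsilon^5$ and $\varepsilon^7$ act by $(x,y)\mapsto(-x,-y)$. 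Hence $M'$ is the disjoint union of the four open quadrants together with the four positive/negative half-axes.

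Second, using Lemma \ref{lem:m13} I rewrite the decomposition of $Q$ under the projection $p$, pulling back through $p^{-1}$ from the $(x,y)$-plane to $\{z=0\}$. Together with Table \ref{tab:M_j}, this yields the disjoint decomposition
\[
p^{-1}(Q) \setminus \{q_0\} \;=\; M_1' \sqcup M_9' \sqcup M_{17}' \sqcup M_{21}' \sqcup M_{25}' \sqcup M_{29}' \sqcup M_{33}' \sqcup M_{35}' \sqcup M_{39}',
\]
covering the closed quadrant $\{x\geq 0,\,y\leq 0\}$ minus the origin.

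Third, I extend this to all of $M'$ by applying the action of $G$ quadrant by quadrant. Concretely, for each of the generators $\varepsilon^1,\varepsilon^4,\varepsilon^7$ (which map $Q$ bijectively onto the other three closed quadrants of $\{z=0\}\setminus\{q_0\}$) I use the defining relations \eqref{eq:epsiMj}--\eqref{eq:eps4Mj} to identify the image of each basic stratum with the corresponding $M_j'$ for $j \notin \{1,9,17,21,25,29,33,35,39\}$. Since the quadrant actions are disjoint bijections and $Q$ itself decomposes disjointly, this produces a decomposition $M' = \bigsqcup_{j=1}^{40} M_j'$, where the $40 = 8 \cdot 4 + 4 \cdot 2$ strata correspond to $8$ orbits of size $4$ (for $j\in\{1,\ldots,8\}$ and $\{9,\ldots,16\}$) plus smaller orbits for the axis-contained strata (indexed $17$--$20$, $21$--$24$, $25$--$28$, $29$--$32$, $35$--$38$) and for the point/segment strata on the $x$-axis ($33,34$ and $39,40$).

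The main issue is not the covering, which is essentially automatic, but verifying the disjointness of sets defined abstractly through the reflections. For the axis strata $M_{17}',\ldots,M_{20}'$ and similarly for the reflected copies of $M_{29}',M_{33}',M_{39}'$, one must check that distinct indices correspond to disjoint subsets of the axes: this follows because each such stratum is obtained by combining a unique half-axis with a specific interval separated by the canonical points $x=\pm 2\pi$ or $y=\pm y_1(k)$-type thresholds, and the action of $G$ on $\{z=0\}$ is free away from the origin (which is excluded from $M'$). Once disjointness on the axes is confirmed, the disjointness on the open quadrants is immediate from Lemma \ref{lem:m13}, and the stratification \eqref{eq:M'Decomposn} follows.
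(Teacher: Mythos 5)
Your overall route is the same as the paper's: combine the stratification of the quadrant $Q$ from Lemma \ref{lem:m13} with the action of the reflections on the plane $\{z=0\}$ recorded in Table \ref{tab:epsiz_0}, and your computation of that action from \eqref{eq:symm_M} is correct. The gap is in your treatment of disjointness, which you yourself single out as the main issue. You assert that ``the action of $G$ on $\{z=0\}$ is free away from the origin'', but this contradicts your own first paragraph: $\varepsilon^{2}$ acts as the identity on the whole plane $z=0$, $\varepsilon^{1}$ and $\varepsilon^{3}$ fix the $x$-axis pointwise, and $\varepsilon^{4}$, $\varepsilon^{6}$ fix the $y$-axis pointwise, so the action is nowhere free. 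Consequently the forty sets $M_{j}^{\prime}$ are \emph{not} pairwise disjoint: for instance $M_{3}^{\prime}=\varepsilon^{2}(M_{1}^{\prime})=M_{1}^{\prime}$ by \eqref{eq:epsiMj} and Table \ref{tab:epsiz_0}, and likewise $M_{31}^{\prime}=\varepsilon^{4}(M_{29}^{\prime})=M_{29}^{\prime}$ because $\varepsilon^{4}$ preserves the negative $y$-axis on which $\gamma_{1}$ lies.

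These coincidences are not a defect to be argued away; the paper flags them in its outline (``all $N_{i}^{\prime}$ are disjoint, while some $M_{i}^{\prime}$ coincide with others''), and they are essential later because the Maxwell set is defined exactly as the union of those $M_{i}^{\prime}$ that coincide with some $M_{j}^{\prime}$, $j\neq i$. The correct reading of \eqref{eq:M'Decomposn} is therefore that $M^{\prime}$ is the disjoint union of the \emph{distinct} sets occurring among $M_{1}^{\prime},\dots,M_{40}^{\prime}$ --- there are $28$ of them: $12$ two-dimensional strata, $8$ curve strata inside the open quadrants (the copies of $\gamma_{2},\gamma_{3}$), $2$ on the $y$-axis and $6$ on the $x$-axis --- not of forty pairwise disjoint sets. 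Your covering argument and the quadrant-by-quadrant identification of images are fine. A minor additional slip: your bookkeeping $40=8\cdot4+4\cdot2$ does not match the index ranges you list; the count of indices is $2\cdot8+5\cdot4+2\cdot2=40$.
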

\begin{proof}
Follows from Lemma \ref{lem:m13} and the description of the action
of reflections $\varepsilon^{i}$ in the plane $\left\{ z=0\right\} $,
see Table \ref{tab:epsiz_0}.

\begin{table}
\begin{centering}
\emph{}%
\begin{tabular}{|c|c|c|c|c|c|c|c|}
\hline 
$i$ & 1 & 2 & 3 & 4 & 5 & 6 & 7\tabularnewline
\hline 
\hline 
$x$ & $x$ & $x$ & $x$ & $-x$ & $-x$ & $-x$ & $-x$\tabularnewline
\hline 
$y$ & $-y$ & $y$ & $-y$ & $y$ & $-y$ & $y$ & $-y$\tabularnewline
\hline 
\end{tabular}\protect\caption{\label{tab:epsiz_0}Action of $\varepsilon^{i}$ in the plane $\left\{ z=0\right\} $}

\par\end{centering}

\end{table}
\hfill$\square$
\end{proof}
Stratification (\ref{eq:M'Decomposn}) is shown in Figure \ref{fig:M'decomp}.

\begin{figure}
\centering{}\includegraphics[scale=0.8]{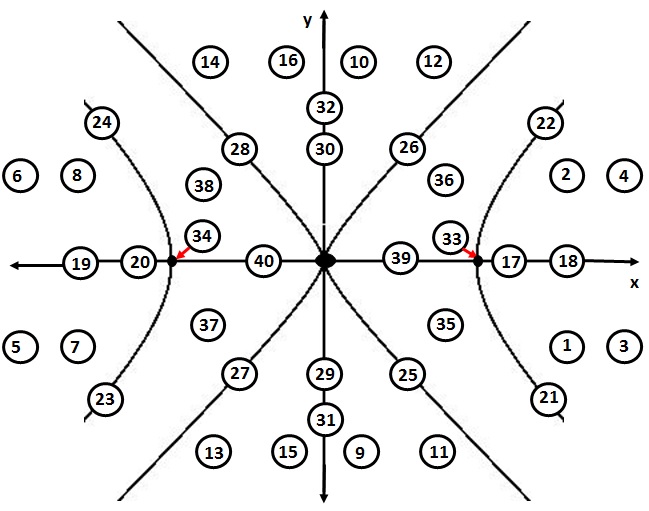}\protect\caption{\label{fig:M'decomp}Stratification of $M^{\prime}$}
\end{figure}

\begin{theorem}
\label{thm:expNi_Mi}For any $i=1,\ldots,40,$ the mapping $\mathrm{Exp}:N_{i}^{\prime}\rightarrow M_{i}^{\prime}$
is a diffeomorphism. \end{theorem}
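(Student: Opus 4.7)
The strategy is to leverage the base cases already established in Lemmas \ref{lem:Exp_curve}, \ref{lem:ExpN9}, \ref{lem:ExpN1}, and \ref{lem:ExpN35}, which collectively handle the indices $j\in\{1,9,17,21,25,29,33,35,39\}$. For every remaining index $i\in\{1,\dots,40\}$, the relations \eqref{eq:epsiNj}--\eqref{eq:eps4Nj} exhibit $N_i'$ as the image under some reflection $\varepsilon^r\in G$ of a base stratum $N_j'$, and the relations \eqref{eq:epsiMj}--\eqref{eq:eps4Mj} exhibit $M_i'$ as the image under the \emph{same} reflection of the corresponding $M_j'$. Thus the task reduces to transporting the diffeomorphism property along each reflection in the finite group $G$.

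The key tool is the intertwining identity
\[
\mathrm{Exp}\circ\varepsilon^r=\varepsilon^r\circ\mathrm{Exp},
\]
which is precisely the property built into the definition of the action of reflections on $N$ in \cite{Extremal_Pseudo_Euclid} (recall the two cases $\varepsilon^r(\lambda,t)=(\varepsilon^r\circ e^{t\vec{H}_\nu}(\lambda),t)$ or $(\varepsilon^r(\lambda),t)$ used in the proof of Proposition \ref{prop:Ref_Di}, according to whether $\varepsilon^r_*\vec{H}_\nu=\mp\vec{H}_\nu$). Given a non-base index $i$ together with a base index $j$ and reflection $\varepsilon^r$ with $N_i'=\varepsilon^r(N_j')$, this identity yields
\[
\mathrm{Exp}|_{N_i'}=\varepsilon^r\circ\mathrm{Exp}|_{N_j'}\circ(\varepsilon^r)^{-1},
\]
which is a composition of three diffeomorphisms (both $\varepsilon^r:N\to N$ and $\varepsilon^r:M\to M$ are involutive diffeomorphisms by \eqref{eq:symm} and \eqref{eq:symm_M}). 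Its image equals $\varepsilon^r(M_j')$, which by \eqref{eq:epsiMj}--\eqref{eq:eps4Mj} is exactly $M_i'$. So $\mathrm{Exp}:N_i'\to M_i'$ is a diffeomorphism.

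The only real obstacle is the bookkeeping: one must verify that the index shifts on the preimage side \eqref{eq:epsiNj}--\eqref{eq:eps4Nj} and on the image side \eqref{eq:epsiMj}--\eqref{eq:eps4Mj} are truly compatible, i.e., that the \emph{same} element $\varepsilon^r$ carries base pair $(N_j',M_j')$ to $(N_i',M_i')$. This is immediate from the intertwining identity, since $\mathrm{Exp}(\varepsilon^r(N_j'))=\varepsilon^r(\mathrm{Exp}(N_j'))=\varepsilon^r(M_j')$, so the authors' parallel choice of index shifts on the two sides is consistent by construction. A quick enumeration then confirms that the four groups of relations \eqref{eq:epsiNj}--\eqref{eq:eps4Nj} cover exactly the $40-9=31$ non-base indices, completing the proof with no new analytic input beyond the four base lemmas.
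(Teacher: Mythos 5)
Your proposal is correct and follows exactly the same route as the paper, which disposes of the theorem in one line: the base cases are Lemmas \ref{lem:Exp_curve}--\ref{lem:ExpN35}, and the remaining $31$ strata are handled by transporting the diffeomorphism property along the symmetries $\varepsilon^{i}$. Your write-up merely makes explicit the intertwining identity $\mathrm{Exp}\circ\varepsilon^{r}=\varepsilon^{r}\circ\mathrm{Exp}$ and the index bookkeeping that the paper leaves implicit.
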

\begin{proof}
Follows from Lemmas \ref{lem:Exp_curve}--\ref{lem:ExpN35} via the
symmetries $\varepsilon^{i}$ of the exponential mapping. \hfill$\square$
\end{proof}

Define the following important sets:
\begin{itemize}
\item the cut locus $\mathrm{Cut}=\left\{ \mathrm{Exp}(\lambda,t_{\mathrm{cut}}(\lambda))\quad\mid\quad\lambda\in C\right\} ,$
\item the first Maxwell set \\
$\mathrm{Max}=\left\{ q_{1}\in M\quad\mid\quad\exists\textrm{ minimizers }q^{\prime}(t)\not\equiv q^{\prime\prime}(t),\quad t\in[0,t_{1}],\textrm{ such that }q^{\prime}(t_{1})=q^{\prime\prime}(t_{1})=q_{1}\right\} .$
\item the first conjugate locus $\mathrm{Conj}=\left\{ \mathrm{Exp}(\lambda,t_{1}^{\mathrm{conj}}(\lambda))\quad\mid\quad\lambda\in C\right\} ,$
\item the rest of the points in $M^{\prime}$ compared with $\mathrm{Cut}$,
i.e., $\mathrm{Rest}=M^{\prime}\backslash\mathrm{Cut}$.
\end{itemize}
We have the following explicit description of these sets:
\begin{eqnarray*}
\mathrm{Cut} & = & \cup\left\{ M_{i}^{\prime}\quad\mid\quad i=1,\ldots,34\right\} ,\\
\mathrm{Max} & = & \cup\left\{ M_{i}^{\prime}\quad\mid\quad i=1,\ldots,20,29,\ldots,32\right\} ,\\
\mathrm{Conj}\,\cap\,\mathrm{Cut} & = & \cup\left\{ M_{i}^{\prime}\quad\mid\quad i=21,\ldots,28,33,34\right\} ,\\
\mathrm{Rest} & = & \cup\left\{ M_{i}^{\prime}\quad\mid\quad i=35,\ldots,40\right\} ,
\end{eqnarray*}
Thus we get the following decomposition of the sets $M^{\prime}$:
\begin{eqnarray*}
M^{\prime} & = & \mathrm{Cut}\,\sqcup\,\mathrm{Rest},\\
\mathrm{Cut} & = & \mathrm{Max}\,\sqcup(\mathrm{Conj}\,\cap\,\mathrm{Cut}).
\end{eqnarray*}
The global structure of the cut locus is shown in Figure \ref{fig:cut_locus}.
\begin{figure}
\centering{}\includegraphics{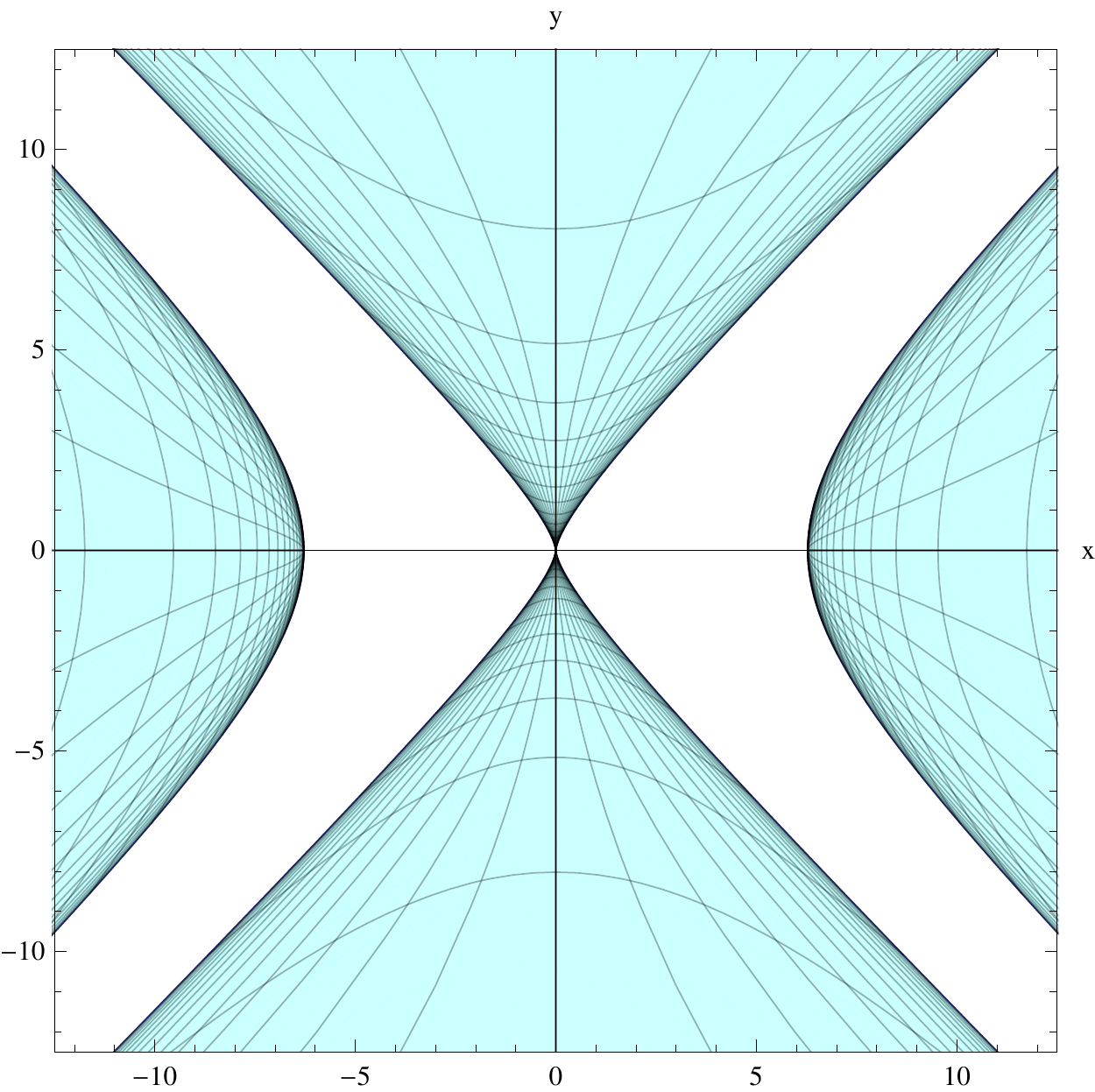}\protect\caption{\label{fig:cut_locus}Cut Locus}
\end{figure}
From our analysis of the exponential mapping, we get the following
description of the cut time and the optimal synthesis on $\mathrm{SH}(2)$.
\begin{theorem}
\label{thm:cut_time_exact}We have the following explicit description
of the cut time, $t_{\mathrm{cut}}(\lambda)=\mathbf{t}(\lambda)$
for any $\lambda\in C$. In detail: 
\begin{eqnarray*}
\lambda & \in & C_{1}\implies t_{\mathrm{cut}}(\lambda)=t_{1}^{\mathrm{Max}}(\lambda)=4K(k),\\
\lambda & \in & C_{2}\implies t_{\mathrm{cut}}(\lambda)=t_{1}^{\mathrm{Max}}(\lambda)=4kK(k),\\
\lambda & \in & C_{4}\implies t_{\mathrm{cut}}(\lambda)=t_{1}^{\mathrm{conj}}(\lambda)=2\pi,\\
\lambda & \in & C_{3}\cup C_{5}\implies t_{\mathrm{cut}}(\lambda)=+\infty.
\end{eqnarray*}
\end{theorem}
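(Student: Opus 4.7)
The formulas for $\lambda \in C_1 \cup C_2 \cup C_3 \cup C_5$ are immediate: by Theorem \ref{thm:Cut_time} we already have $t_{\mathrm{cut}}(\lambda) = \mathbf{t}(\lambda)$ on $C \setminus C_4$, and the explicit values $\mathbf{t}(\lambda) = 4K(k)$, $4kK(k)$, $+\infty$ are exactly (\ref{eq:ttC1}), (\ref{eq:ttC2}), (\ref{eq:ttC35}). Thus the entire content to be proved is the $C_4$ case, where $\mathrm{Exp}(\lambda, t) = (\pm t, 0, 0)$ stays in $M^{\prime}$ for every $t > 0$; consequently the reduction used in Theorem \ref{thm:Cut_time} (picking a nearby $t_2$ with $\sin(\gamma_{t_2/2}/2) \neq 0$) fails because that quantity is identically zero along $C_4$ trajectories.

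My plan for $\lambda \in C_4$ is as follows. The upper bound $t_{\mathrm{cut}}(\lambda) \leq 2\pi$ is already supplied by (\ref{eq:tCutbound}) combined with (\ref{eq:ttC4}), so I only need the lower bound. Fix any $t_1 \in (0, 2\pi)$ and set $q_1 = \mathrm{Exp}(\lambda, t_1)$. From Tables \ref{tab:N33_39} and \ref{tab:M_j} one reads off that $(\lambda, t_1) \in N_{39}^{\prime}$ (or the reflected cell $N_{40}^{\prime}$, handled by $\varepsilon^4$) and $q_1 \in M_{39}^{\prime}$. I will then show that $q_1$ has a \emph{unique} preimage in the potentially optimal domain $\widehat{N}$, from which optimality of the geodesic on $[0, t_1]$ follows.

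The uniqueness argument splits into three observations: (i) no preimage of $q_1$ can lie in $\widetilde{N}$, since $\mathrm{Exp}(\widetilde{N}) \subset \widetilde{M}$ by Proposition \ref{prop:Exp Di Mi}(2) and $\widetilde{M} \cap M^{\prime} = \emptyset$; (ii) no preimage can lie in $N_k^{\prime}$ for $k \neq 39$, because the decomposition (\ref{eq:M'Decomposn}) is into disjoint strata and, tracking the reflection actions (\ref{eq:epsiMj})--(\ref{eq:eps4Mj}) together with Table \ref{tab:epsiz_0}, the stratum $M_{39}^{\prime}$ (the open segment $y = z = 0$, $x \in (0, 2\pi)$) is not hit by any other reflection orbit of the generator cells of Table \ref{tab:M_j}; and (iii) within $N_{39}^{\prime}$ the restriction $\mathrm{Exp} : N_{39}^{\prime} \to M_{39}^{\prime}$ is a diffeomorphism by Theorem \ref{thm:expNi_Mi}. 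Together these imply the preimage of $q_1$ in $\widehat{N}$ is exactly $\{(\lambda, t_1)\}$.

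To conclude, Filippov's theorem produces an optimal geodesic from $q_0$ to $q_1$, and its parameter must lie in $\widehat{N}$ because universally $t_{\mathrm{cut}}(\lambda^{\prime}) \leq \mathbf{t}(\lambda^{\prime})$. Uniqueness of the $\widehat{N}$-preimage then forces this optimal geodesic to coincide with $\mathrm{Exp}(\lambda, \cdot)$, so $\mathrm{Exp}(\lambda, \cdot)$ is optimal on $[0, t_1]$. Since $t_1 \in (0, 2\pi)$ was arbitrary, $t_{\mathrm{cut}}(\lambda) \geq 2\pi$, completing the equality. The only real obstacle I foresee is observation (ii): carefully checking that the reflection orbit of $M_{39}^{\prime}$ does not accidentally coincide with any of the other $M_k^{\prime}$ strata; but this is pure bookkeeping against Table \ref{tab:epsiz_0} and the explicit coordinate ranges in Table \ref{tab:M_j}.
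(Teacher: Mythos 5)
Your proposal is correct and follows essentially the same route as the paper: the paper likewise disposes of $C\setminus C_{4}$ by citing Theorem \ref{thm:Cut_time} and then, for $\lambda\in C_{4}^{0}$, observes that the point $(t,0,0)$ has the unique $\widehat{N}$-preimage $(\lambda,t)\in N_{39}^{\prime}$ for $t\in(0,2\pi)$ (and $N_{33}^{\prime}$ at $t=2\pi$), transferring to $C_{4}^{1}$ by a reflection. The uniqueness argument you spell out in steps (i)--(iii) --- including the bookkeeping check that $M_{39}^{\prime}$ lies in $\mathrm{Rest}$ and coincides with no other stratum $M_{k}^{\prime}$ --- is precisely the content the paper leaves implicit in the phrase ``connected with $q_{0}$ by a unique geodesic,'' so your version is just a more explicit rendering of the same proof.
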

\begin{proof}
If $\lambda\in C\backslash C_{4}$, then we know from Theorem \ref{thm:Cut_time}
that $t_{\mathrm{cut}}(\lambda)=\mathbf{t}(\lambda)=t_{1}^{\mathrm{Max}}(\lambda)$.
It remains to consider the case $\lambda\in C_{4}^{0}\cup C_{4}^{1}$.
Let $\lambda\in C_{4}^{0}$, then $q_{t}=\mathrm{Exp}(\lambda,t)=(t,0,0).$
For any $t\in[0,t_{1}],\quad t_{1}=\mathbf{t}(\lambda)=2\pi$, the
point $q_{t}$ is connected with $q_{0}$ by a unique geodesic $\mathrm{Exp}(\lambda^{1},s),\quad s\in(0,s_{1}]$,
with $(\lambda^{1},s_{1})\in\widehat{N}$, namely $(\lambda^{1},s_{1})=(\lambda,t)\in N_{39}^{\prime}$
for $t\in(0,2\pi)$, and $(\lambda^{1},s_{1})=(\lambda,t)\in N_{33}^{\prime}$
for $t=2\pi$. Thus the geodesic $q_{t},\quad t\in[0,t_{1}]$ is a
minimizer.

It follows that $t_{\mathrm{cut}}(\lambda)=\mathbf{t}(\lambda)=t_{1}^{\mathrm{conj}}(\lambda)=2\pi$
for $\lambda\in C_{4}^{0}$. By applying a reflection $\varepsilon^{i}$,
we get a similar equality for $\lambda\in C_{4}^{1}$. \hfill$\square$
\end{proof}

From the above description of the structure of the exponential mapping,
we get the following statement.
\begin{theorem}
\label{thm:syn}\end{theorem}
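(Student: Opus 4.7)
The plan is to combine the diffeomorphism results already established on the interior $\widetilde{N}\to\widetilde{M}$ (Corollary \ref{cor:Diffeo}) with those on each boundary stratum $N_j^{\prime}\to M_j^{\prime}$ (Theorem \ref{thm:expNi_Mi}), and then read off the optimal synthesis stratum by stratum on the full target $\widehat{M}=M\setminus\{q_0\}$. The decomposition
$\widehat{M}=\widetilde{M}\sqcup M^{\prime}$
already splits the problem into a ``generic'' part where one expects a unique optimal trajectory and a ``singular'' part $M^{\prime}$ where Maxwell collisions may double-cover.

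First I would handle the generic case $q_1\in\widetilde{M}$. By Corollary \ref{cor:Diffeo}, there is a unique $(\lambda,t_1)\in\widetilde{N}$ with $\mathrm{Exp}(\lambda,t_1)=q_1$, and by construction $t_1<\mathbf{t}(\lambda)=t_{\mathrm{cut}}(\lambda)$ in view of Theorem \ref{thm:Cut_time}. Hence the geodesic $s\mapsto\mathrm{Exp}(\lambda,s)$, $s\in[0,t_1]$, is the unique minimizer joining $q_0$ to $q_1$, and the optimal open-loop controls are $u_i(s)=h_i\bigl(e^{s\overrightarrow{H}_{\nu}}(\lambda)\bigr)$ with $h_i$ from (4.8) of \cite{Extremal_Pseudo_Euclid}. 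This recovers and extends the formula stated right after Theorem \ref{thm:Cut_time}.

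Second I would analyze $q_1\in M^{\prime}$ using the stratification (\ref{eq:M'Decomposn}) and Theorem \ref{thm:expNi_Mi}. The preimage $\mathrm{Exp}^{-1}(q_1)\cap\widehat{N}$ is the disjoint union of the sets $N_i^{\prime}$ whose diffeomorphic image $M_i^{\prime}$ contains $q_1$, and the pairing pattern is dictated by the reflection identifications (\ref{eq:epsiMj})--(\ref{eq:eps4Mj}). For $q_1\in\mathrm{Max}=\cup\{M_i^{\prime}\mid i\in\{1,\ldots,20,29,\ldots,32\}\}$, two distinct indices $i\neq j$ satisfy $M_i^{\prime}=M_j^{\prime}\ni q_1$, producing exactly two preimages related by some reflection $\varepsilon^k$, hence two distinct minimizers yielding a double-valued synthesis. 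For $q_1\in\mathrm{Rest}=\cup\{M_i^{\prime}\mid i\in\{35,\ldots,40\}\}$ the stratum has a single preimage and the minimizer is unique. On $\mathrm{Conj}\cap\mathrm{Cut}$ (indices $21,\ldots,28,33,34$) two colliding preimages merge into one, again giving a unique (but singular) minimizer, realized as the common limit of the two adjacent families.

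The main obstacle is the combinatorial bookkeeping: one must check, using the action in Table \ref{tab:epsiz_0} and the explicit formulas in Table \ref{tab:M_j}, that the forty strata do organize into the claimed $\mathrm{Max}/\mathrm{Conj}\cap\mathrm{Cut}/\mathrm{Rest}$ partition with the correct multiplicities, and that the two reflection-conjugate preimages on a Maxwell stratum produce geodesics of equal length but different controls. Once this is in place, existence and uniqueness of the optimal synthesis follow immediately from the diffeomorphism properties; the controls are recovered on each stratum by the formula $u_i(q)=h_i(\lambda)$ (taking either preimage on $\mathrm{Max}$, yielding two distinct control values), and the dichotomy ``double-valued on $\mathrm{Max}$, single-valued on $\widehat{M}\setminus\mathrm{Max}$'' announced earlier in the paper is established.
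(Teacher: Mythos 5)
Your proposal is correct and follows essentially the same route as the paper, which derives Theorem \ref{thm:syn} directly from the preceding structural results (Corollary \ref{cor:Diffeo}, Theorem \ref{thm:expNi_Mi}, the disjointness $\mathrm{Exp}(\widetilde{N})\subset\widetilde{M}$, $\mathrm{Exp}(N^{\prime})\subset M^{\prime}$ from Proposition \ref{prop:Exp Di Mi}, and the identification of which strata $M_i^{\prime}$ coincide under the reflections of Table \ref{tab:epsiz_0}). The combinatorial bookkeeping you flag as the main obstacle is exactly what the paper's explicit lists for $\mathrm{Max}$, $\mathrm{Conj}\cap\mathrm{Cut}$ and $\mathrm{Rest}$ encode, so no new idea is missing.
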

\begin{enumerate}
\item For every point $q_{1}\in\widetilde{M}\cup\mathrm{Rest}$, there exists
a unique minimizer $q(t),\quad t\in[0,t_{1}]$, for which the endpoint
$q(t_{1})=q_{1}$ is neither a cut point nor a conjugate point. 
\item For any point $q_{1}\in\mathrm{Max}$, there exist exactly two minimizers
that connect $q_{0}$ to $q_{1}$ for which $q_{1}$ is a cut point
but not a conjugate point.
\item For any point $q_{1}\in\mathrm{Conj}\,\cap\,\mathrm{Cut}$, there
exists a unique minimizer that connects $q_{0}$ to $q_{1}$ for which
$q_{1}$ is both a cut and a conjugate point, but not a Maxwell point. 
\end{enumerate}

\section{Sub-Riemannian Caustics and Sphere}

In \cite{Max_Conj_SH2} we presented plots of sub-Riemannian sphere
and sub-Riemannian wavefront in the rectifying coordinates $(R_{1},R_{2},z)$.
Here we perform another graphic study of the essential sub-Riemannian
objects, i.e., sub-Riemannian caustic and sub-Riemannian sphere. Recall
that the sub-Riemannian caustic which is the first conjugate locus
is given as:
\[
\mathrm{Conj}=\left\{ \mathrm{Exp}\left(\lambda,t_{1}^{\mathrm{conj}}(\lambda)\right)\quad|\quad\lambda\in C\right\} .
\]
The caustic is presented in Figure \ref{fig:Caustic_C1}. The component
starting at $(0,0,0)$ is the local component of the caustic whereas
other two parts on right and left side are the parts of the global
component of the first caustic. The red colored surface inside the
local and global components of the caustic is the cut locus whereas
we see that the boundary of cut locus forms the boundary of the caustic.
A zoomed version of the local component of the caustic is separately
shown in Figure \ref{fig:Caustic_Local}. It is evident that it is
a four cusp surface as predicted in \cite{Agrchev_Barilari_Boscain_SR}.
A combined plot of first and second caustic is also shown in Figure
\ref{fig:Caustic_1_2}. Note that in the local component of the caustic,
the first caustic is solid and the second caustic is transparent whereas
in the global component of the caustic, the second caustic is solid
and the first caustic is transparent. 
\begin{figure}
\begin{centering}
\includegraphics[scale=0.8]{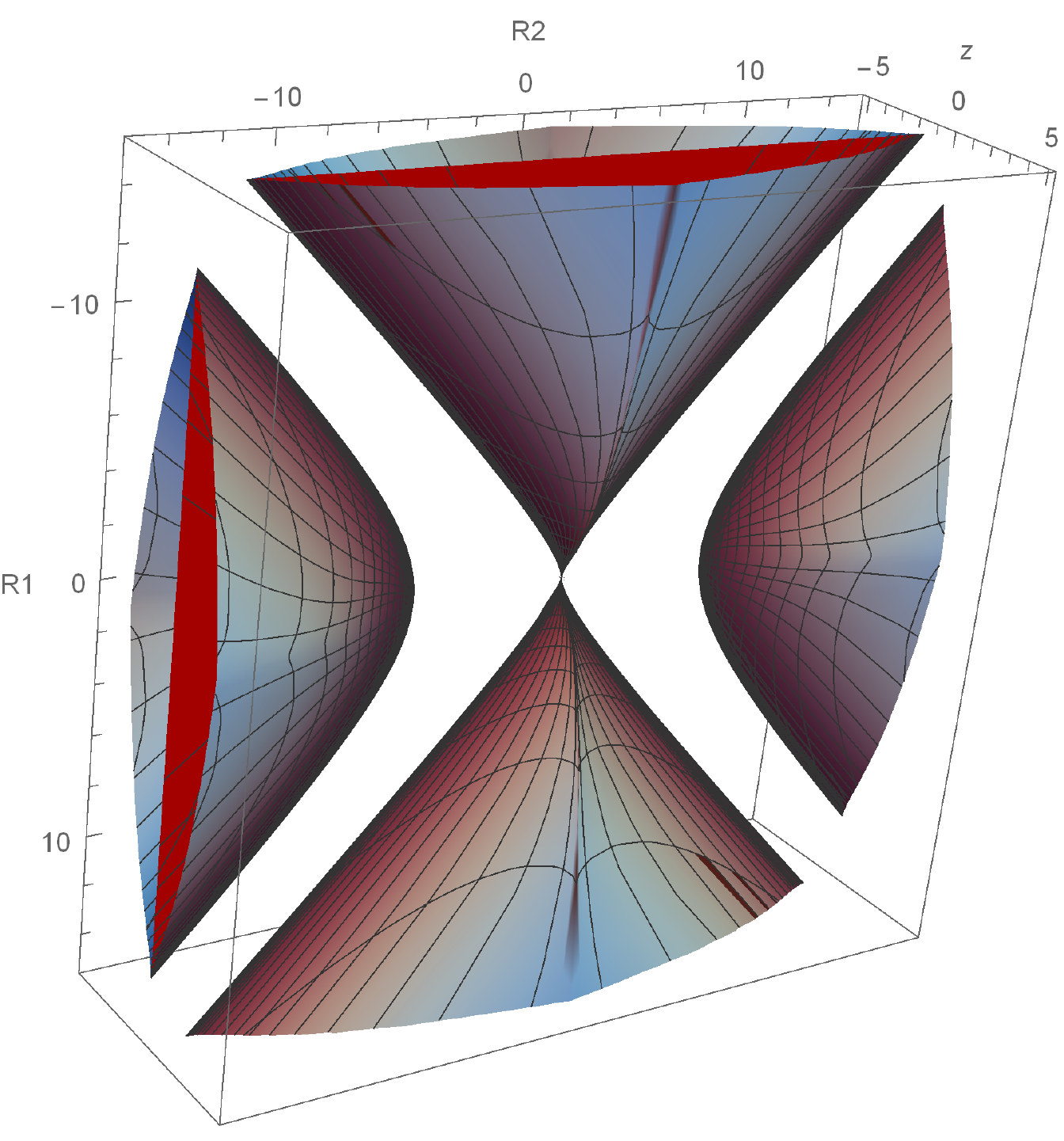}\protect\caption{\label{fig:Caustic_C1}Sub-Riemannian caustic and cut locus}

\par\end{centering}

\end{figure}
\begin{figure}
\centering{}\includegraphics[scale=0.8]{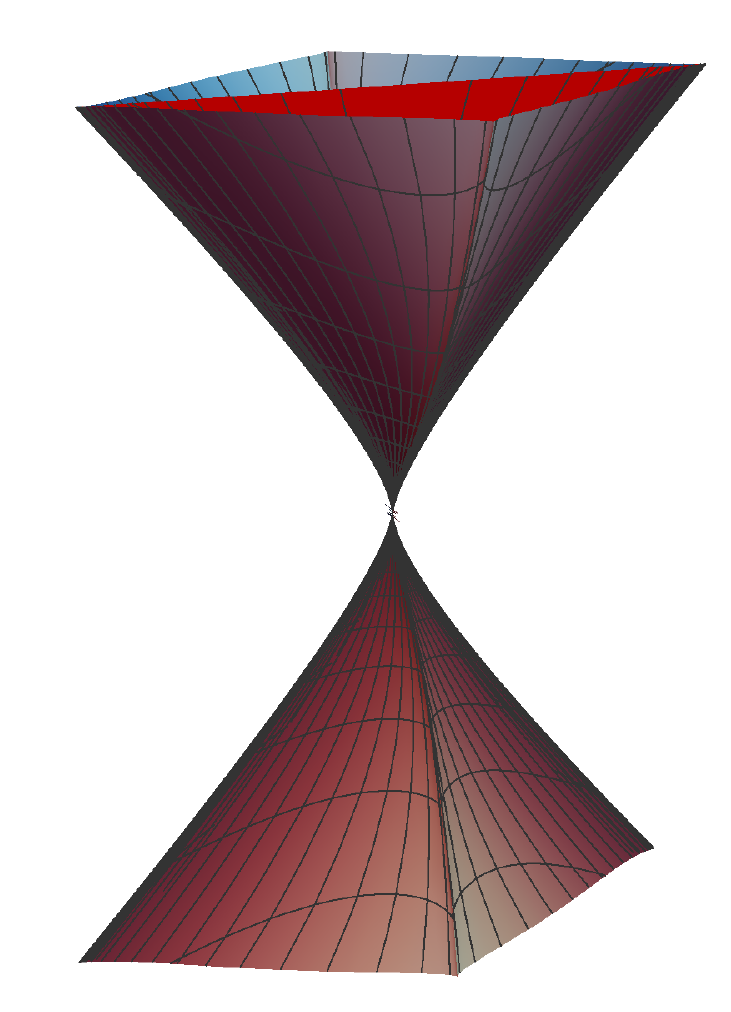}\protect\caption{\label{fig:Caustic_Local}Local component of sub-Riemannian caustic
and cut locus}
\end{figure}
\begin{figure}
\centering{}\includegraphics[scale=0.5]{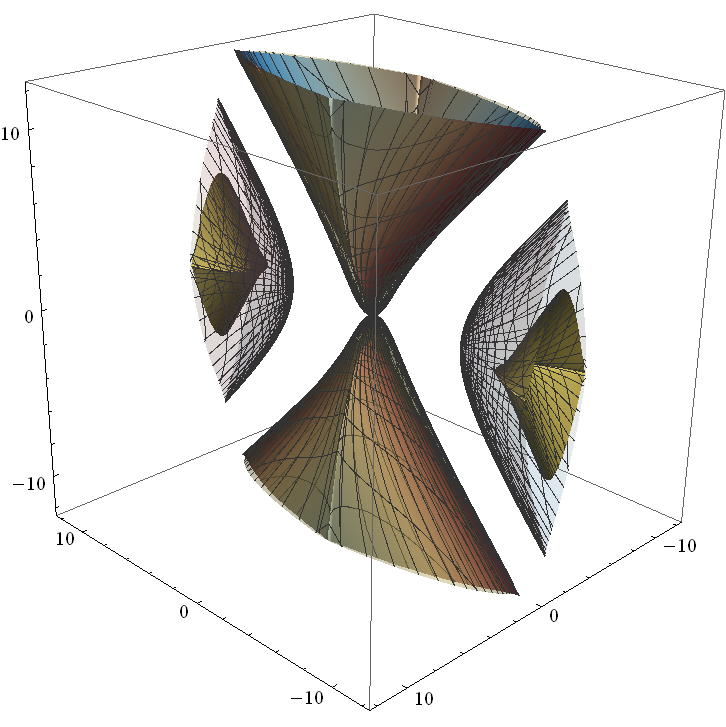}\protect\caption{\label{fig:Caustic_1_2}Sub-Riemannian first and second caustic}
\end{figure}
The sub-Riemannian sphere $S_{R}(q_{0};R)$ at $q_{0}$ is the set
of end-points of minimizing geodesics of sub-Riemannian length $R$
and starting from $q{}_{0}$:

\begin{eqnarray*}
S_{R} & = & \left\{ \mathrm{Exp}(\lambda,R)\in M\quad\vert\quad\lambda\in C,\quad t_{\mathrm{cut}}(\lambda)\geq R\right\} =\left\{ q\in M\quad\vert\quad d(q_{0},q)=R\right\} .
\end{eqnarray*}
The following plots are presented:
\begin{enumerate}
\item Sphere of radius $R=\pi$ (Figure \ref{fig:sphereRpiR12}),
\item Sphere of radius $R=2\pi$ (Figure \ref{fig:sphereR2piR12}),

\item Intersection of the cut locus with the hemisphere $z<0$ of radius
$R=\pi$ (Figure \ref{fig:cutnegsphereRpiR12}),
\item Intersection of the cut locus with the hemisphere $z<0$ of radius
$R=2\pi$ (Figure \ref{fig:cutnegsphereR2piR12}),
\item Intersection of the cut locus with the hemisphere $z<0$ of radius
$R=3\pi$ (Figure \ref{fig:cutnegsphereR3piR12}),
\item Matryoshka of hemispheres $z<0$ of radii $R=\pi$ and $R=2\pi$ (Figure
\ref{fig:matr2R12}).
\end{enumerate}
\begin{figure}
\centering{}
\begin{minipage}[t]{0.45\columnwidth}%
\begin{center}
\emph{\includegraphics[scale=0.3]{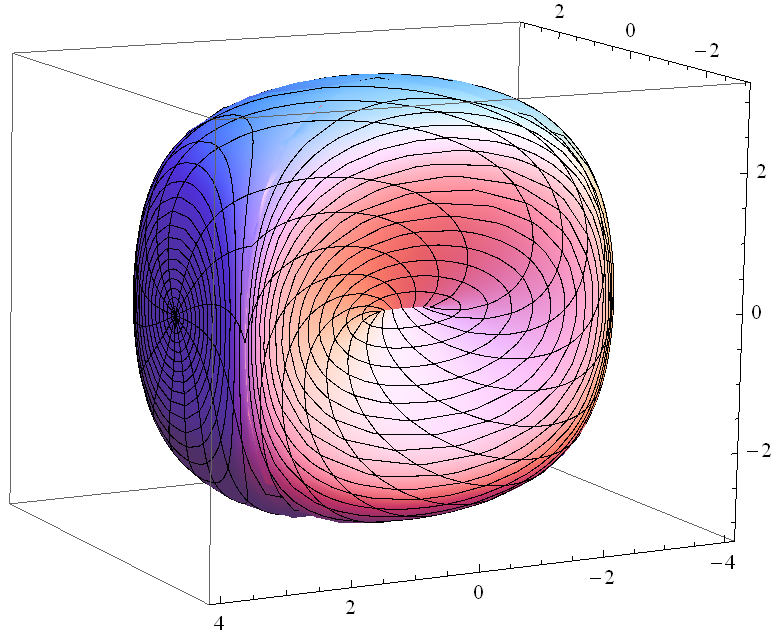}}
\par\end{center}

\protect\caption{\label{fig:sphereRpiR12}Sub-Riemannian sphere of radius $R=\pi$}
\end{minipage}\emph{\hfill{}}%
\begin{minipage}[t]{0.45\columnwidth}%
\begin{center}
\emph{\includegraphics[scale=0.3]{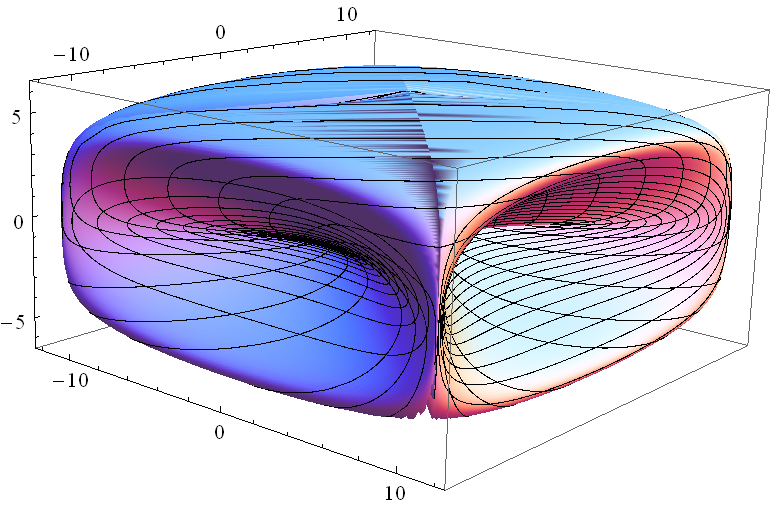}}
\par\end{center}

\protect\caption{\label{fig:sphereR2piR12}Sub-Riemannian sphere of radius $R=2\pi$}
\end{minipage}
\end{figure}

\begin{figure}
\begin{minipage}[t]{0.45\columnwidth}%
\begin{center}
\includegraphics[scale=0.3]{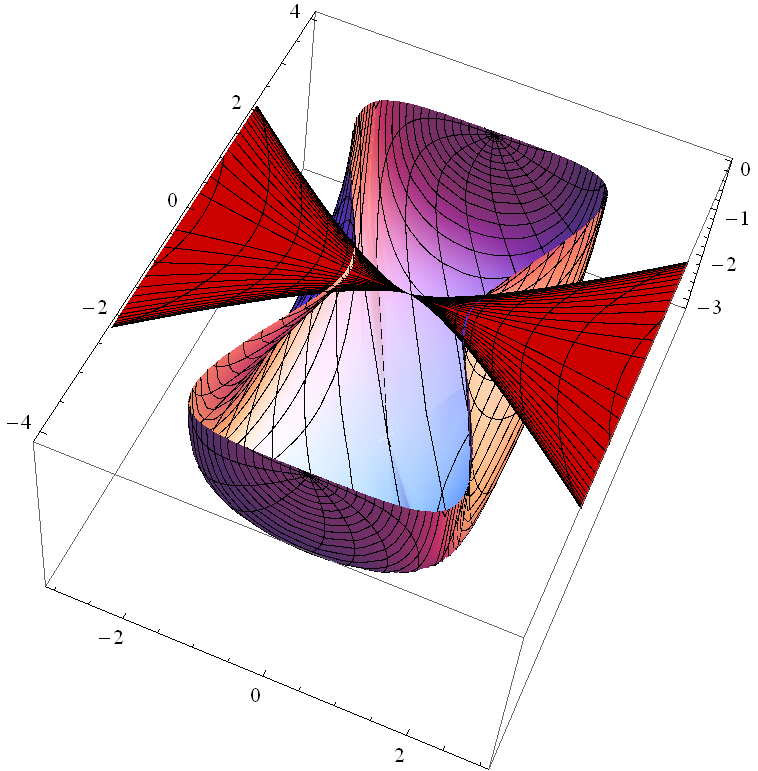}
\par\end{center}

\protect\caption{\label{fig:cutnegsphereRpiR12}Intersection of the cut locus with
the hemisphere $z<0$ of radius $R=\pi$ }
\end{minipage}\hfill{}%
\begin{minipage}[t]{0.45\columnwidth}%
~

\begin{center}
\includegraphics[scale=0.3]{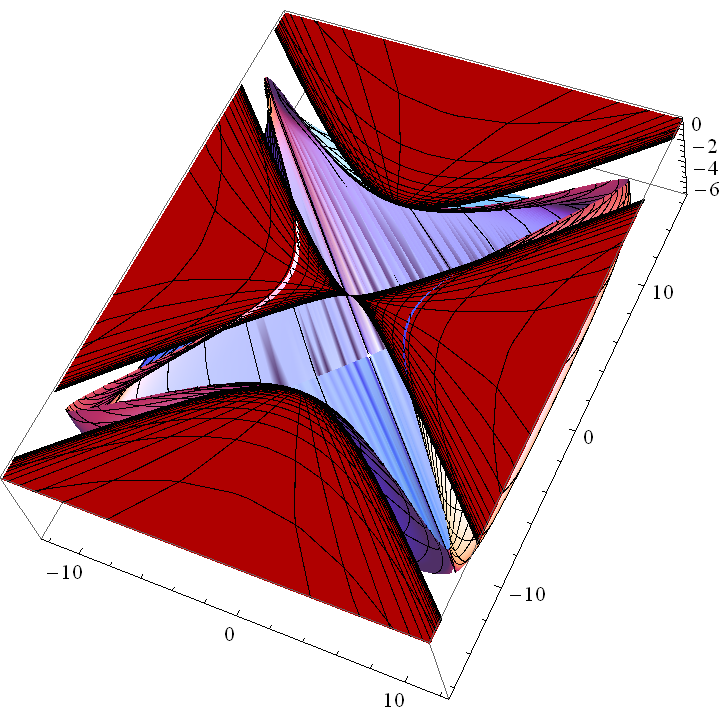}
\par\end{center}

\protect\caption{\label{fig:cutnegsphereR2piR12}Intersection of the cut locus with
the hemisphere $z<0$ of radius $R=2\pi$ }
\end{minipage}
\end{figure}
\begin{figure}
\centering{}%
\begin{minipage}[t]{0.45\columnwidth}%
\begin{center}
\includegraphics[scale=0.3]{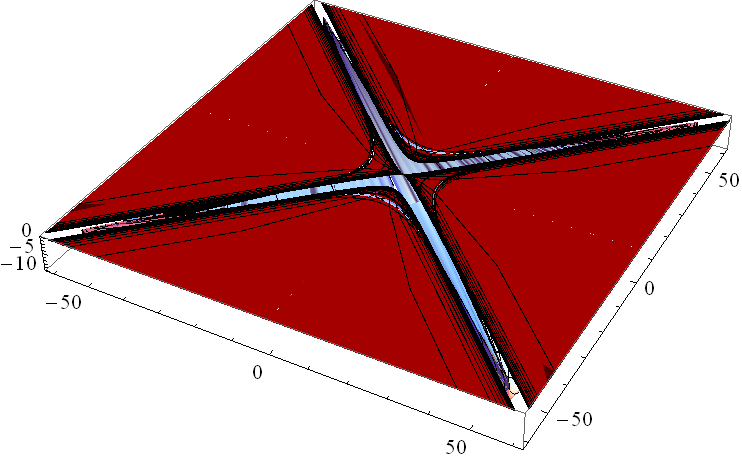}
\par\end{center}%
\end{minipage}\protect\caption{\label{fig:cutnegsphereR3piR12}Intersection of the cut locus with
the hemisphere $z<0$ of radius $R=2\pi$ }
\end{figure}
\begin{figure}
\centering{}%
\begin{minipage}[t]{0.45\columnwidth}%
\begin{center}
\includegraphics[scale=0.3]{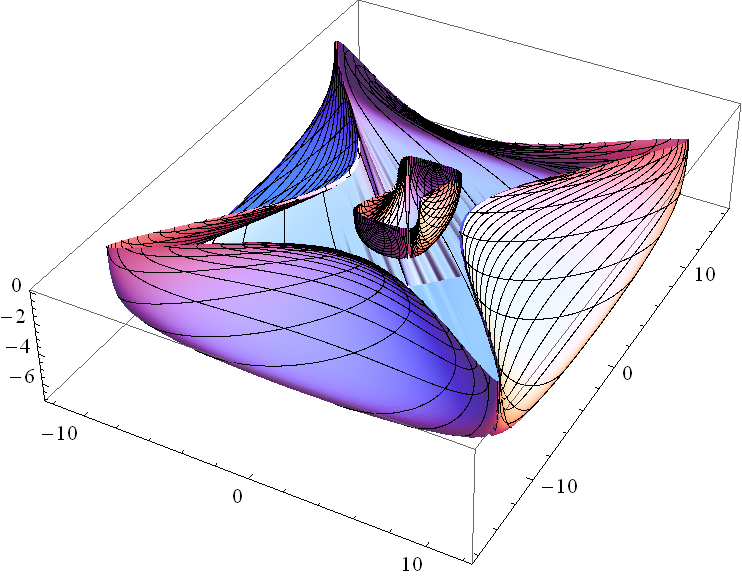}
\par\end{center}%
\end{minipage}\protect\caption{\label{fig:matr2R12}Matryoshka of hemispheres $z<0$ of radii $R=\pi$
and $R=2\pi$ }
\end{figure}

\section{Conclusion}

The global optimality analysis and structure of exponential mapping
for the sub-Riemannian problem on the Lie group SH(2) was considered.
We cutout open dense domains by Maxwell strata in the preimage and
in the image of exponential mapping and prove that restriction of
the exponential mapping to these domains is a diffeomorphism. This
fact leads to the proof that the cut time in the sub-Riemannian problem
on the Lie group $\mathrm{\ensuremath{SH(2)}}$ is equal to the first
Maxwell time. We then describe the global structure of the exponential
mapping and obtain a stratification of the cut locus in the plane
$z=0$. Consequently, the problem of finding optimal trajectories
from any initial point $q_{0}\in M$ to another point $q_{1}\in M,\quad z\neq0$
is reduced to solving a set of algebraic equations. Summing up, a
complete optimal synthesis for the sub-Riemannian problem on the Lie
group $\mathrm{SH}(2)$ was constructed. 
\bibliographystyle{unsrt}
\bibliography{ref}

\end{document}